\documentclass[12pt,reqno]{amsart}
\usepackage{amsmath,amssymb,mathrsfs,amsthm}
\usepackage{amsfonts}
\usepackage{braket}
\usepackage[inline]{enumitem} 
\usepackage{mathtools} 
\usepackage{nccbbb} 
\usepackage[utf8]{inputenc} 
\usepackage[sharp]{easylist}
\usepackage{hyperref}
\hypersetup{%
	colorlinks=true, linkcolor=blue, 
	citecolor=green
}

\addtolength{\hoffset}{-1.7cm} \addtolength{\textheight}{2.6cm}
\addtolength{\voffset}{-1.6cm} \addtolength{\textwidth}{3.2cm}

\linespread{1.13}

\newtheorem{theorem}{Theorem}[section]
\newtheorem{lemma}[theorem]{Lemma}
\newtheorem{proposition}[theorem]{Proposition}
\newtheorem{corollary}[theorem]{Corollary}
\theoremstyle{definition}
\newtheorem{remark}[theorem]{Remark}

\numberwithin{equation}{section}
\allowdisplaybreaks

\usepackage{acronym}

\acrodef{fbm}[$ \frac14 $-fbm]{$ \frac14 $-Fractional Brownian Motion}
\acrodef{aSHE}[ashe]{Additive Stochastic Heat Equation}
\acrodef{SPR}[spr]{Super-Polynomially Rate}
\acrodef{OP}[op]{Overwhelming Probability}
\acrodef{BDG}[bdg]{Burkholder--Davis--Gundy}

\usepackage{graphicx}
\newcommand*{\Cdot}{{\raisebox{-0.5ex}{\scalebox{1.8}{$\cdot$}}}} 

\newcommand{\BK}[1]{ {\left( #1 \right)} }
\newcommand{\sqBK}[1]{ {\left[ #1 \right]} }
\newcommand{\curBK}[1]{ {\left\{ #1 \right\}} }
\newcommand{\absBK}[1]{ {\left| #1 \right|} }
\newcommand{\VertBK}[1]{ {\left\Vert #1 \right\Vert} }
\newcommand{\BVert}{ \Big\Vert }
\newcommand{\Vertbk}[1]{ \Vert #1 \Vert }
\newcommand{\angleBK}[1]{ {\left < #1 \right >} }
\newcommand{\anglebk}[1]{ {\langle #1 \rangle} }

\newcommand{\ceilbk}[1]{ {\lceil #1 \rceil} }
\newcommand{\floorBK}[1]{ {\left\lfloor #1 \right\rfloor} }
\newcommand{\floorbk}[1]{ {\lfloor #1 \rfloor} }

\DeclareMathOperator{\sech}{sech}	
\DeclareMathOperator{\sign}{sign}	

\DeclareMathOperator{\Ex}{\mathbf{E}}	
\DeclareMathOperator{\Pro}{\mathbf{P}}	

\DeclareMathOperator{\ind}{\mathbf{1}}					
\DeclareMathOperator{\Exp}{Exp}							
\newcommand{\PPP}{\text{PPP}}							
\newcommand{\PPPp}{\text{PPP}_+(2\varepsilon^{-1/2})}	
\DeclareMathOperator{\Pois}{Pois}						

\newcommand{\pN}{p^\text{N}}	
\newcommand{\qN}{q^\text{N}}	
\newcommand{\pbfe}{p^{\boldsymbol{\epsilon}}}
\newcommand{\pNbfe}{p^{\text{N},\boldsymbol{\epsilon}}}
\newcommand{\qNbfe}{q^{\text{N},\boldsymbol{\epsilon}}}

\newcommand{\I}{I^\varepsilon} 

\newcommand{\N}{\bbZ_{+}}
\newcommand{\bbR}{ \mathbb{R} }
\newcommand{\bbZ}{ \mathbb{Z} }

\newcommand{\Qsp}{\mathscr{Q}}
\newcommand{\Qnorm}[1]{\left|#1\right|_{\mathscr{Q}}}
\newcommand{\QTnorm}[1]{\left|#1\right|_{\mathscr{Q}_T}}

\newcommand{\EM}{ Q^{\varepsilon} } 	
\newcommand{\EMz}{ Q^{\varepsilon,(0)} }	

\newcommand{\Pe}{ \widetilde{Q}^{\varepsilon} } 	
\newcommand{\Pek}{ \widetilde{Q}^{\varepsilon,k} } 	
\newcommand{\Qe}{ \widehat{Q}^{\varepsilon} } 	

\newcommand{\Ae}{A^\varepsilon}			
\newcommand{\Bul}{W^\varepsilon} 		
\newcommand{\Bulp}{W^{\varepsilon,*}} 	
\renewcommand{\Re}{R^\varepsilon}		


\newcommand{\flu}{ \mathcal{F}^{ \varepsilon,\delta } }
\newcommand{\flux}{ \mathcal{F}^{ \varepsilon,\varepsilon^{a} } }
\newcommand{\flug}{ \mathcal{G}^{ \varepsilon } }
\newcommand{\grd}{ \mathcal{\rho}^{\varepsilon} }

\newcommand{\icQ}{ \mathcal{W}^{ \boldsymbol{\epsilon} } }
\newcommand{\ic}{ \mathcal{W}^{\boldsymbol{\epsilon},*} }
\newcommand{\icc}{ \mathcal{W} }
\newcommand{\rd}{ \mathcal{R}^{\boldsymbol{\epsilon}} }
\newcommand{\ato}{ \mathcal{A}^{\boldsymbol{\epsilon}} }
\newcommand{\atoo}{ \mathcal{A}^{ \varepsilon,\delta,\e^{b} } }

\newcommand{\mg}{ \mathcal{M}^{\boldsymbol{\epsilon}} }
\newcommand{\mgg}{ \mathcal{M} }
\newcommand{\mgloc}{ \mathcal{N}^{\boldsymbol{\epsilon}} }

\newcommand{\tagpp}{ \widetilde{\mathcal{X}}^{\varepsilon} }
\newcommand{\tagp}{ \mathcal{X}^{\varepsilon} }
\newcommand{\tagP}{ \mathcal{X} }

\newcommand{\Fl}{ F^{\varepsilon, \text{l}} }	
\newcommand{\Flp}{ F^{\varepsilon,\text{l},*} }	
\newcommand{\Ul}{ U^{\varepsilon, \text{l}} }	
\newcommand{\Ur}{ U^{\varepsilon, \text{r}} }	
\newcommand{\Hl}{ H^{\varepsilon, \text{l}} }
\newcommand{\Hr}{ H^{\varepsilon, \text{r}} }

\newcommand{\Xl}{X^{\varepsilon,\text{l}}}
\newcommand{\Xlp}{X^{\varepsilon,\text{l},*}}
\newcommand{\Xr}{X^{\varepsilon,\text{r}}}

\newcommand{\te}{t^\varepsilon} 

\newcommand{\tloc}{\widehat{t_\varepsilon}}
\newcommand{\tploc}{\widehat{t'_\varepsilon}}

\newcommand{\e}{\varepsilon}
\newcommand{\bfe}{\boldsymbol{\epsilon}}
\renewcommand{\ne}{ n_{\boldsymbol{\epsilon}} }

\newcommand{\Dbfe}{ D^{\boldsymbol{\epsilon}} }
\newcommand{\Fbfe}{ F^{\boldsymbol{\epsilon}} }
\newcommand{\Gbfe}{ G^{\boldsymbol{\epsilon}} }
\newcommand{\GbfeL}{ G^{\boldsymbol{\epsilon},L} }

\newcommand{\mbfe}{ m^{\boldsymbol{\epsilon}} }
\newcommand{\Nbfe}{ N^{\boldsymbol{\epsilon}} }
\newcommand{\fbfe}{ f^{\boldsymbol{\epsilon}} }
\newcommand{\gbfe}{ g^{\boldsymbol{\epsilon}} }

\newcommand{\rbfe}{ r^{\boldsymbol{\epsilon}} }

\newcommand{\Psibfe}{ \Psi^{\boldsymbol{\epsilon}} }
\newcommand{\Psitilbfe}{ \widetilde{\Psi}^{ \boldsymbol{\epsilon} } }
\newcommand{\scrFbfe}{ \mathscr{F}^{\boldsymbol{\epsilon}} }

\newcommand{\bfz}{\mathbf{0}}
\newcommand{\bfv}{\mathbf{v}}

\newcommand{\bfic}{\mathbf{W}^{\boldsymbol{\epsilon},*}}

\newcommand{\cha}{\varphi}	


\newcommand{\phie}{\phi^\varepsilon}
\newcommand{\taue}{\tau^\varepsilon}	

\newcommand{\de}{ d^{\varepsilon} }
\newcommand{\fe}{f^\varepsilon}

\newcommand{\ie}{i_\varepsilon} %
\newcommand{\re}{r^\varepsilon}
\newcommand{\xe}{x^\varepsilon}

\newcommand{\Be}{B^\varepsilon} 
\newcommand{\De}{D^\varepsilon}

\newcommand{\Fe}{F^\varepsilon} %
\newcommand{\Ge}{G^\varepsilon}
\newcommand{\He}{H^\varepsilon}
\newcommand{\Je}{J^\varepsilon}
\newcommand{\Se}{S^\varepsilon}

\newcommand{\Me}{M^\varepsilon} 	
\newcommand{\Xe}{X^\varepsilon} 	
\newcommand{\Ye}{Y^\varepsilon} 	

\newcommand{\calDe}{\mathcal{D}^\varepsilon}	%

\newcommand{\barB}{\overline{B}}


\newcommand{\calA}{ \mathcal{A} }

\newcommand{\calK}{ \mathcal{K} }
\newcommand{\calN}{ \mathcal{N} }

\newcommand{\calV}{ \mathcal{V} }

\newcommand{\scrB}{ \mathscr{B} }

\newcommand{\scrI}{ \mathscr{I} }
\newcommand{\scrK}{ \mathscr{K} }

\newcommand{\scrW}{ \mathscr{W} }


\begin{document}

\title[Fluctuation of the Atlas Model]{
Equilibrium fluctuation of\\ 
the Atlas Model}

\author[A. Dembo]{Amir Dembo$^\star$}
\address{Departments of Statistics and Mathematics \\ Stanford University, Stanford, CA 94305}
\email{adembo@stat.stanford.edu}
\author[L.-C. Tsai]{Li-Cheng Tsai$^\dagger$}
\address{Department of Mathematics \\ Stanford University, Stanford, CA 94305}

\thanks{$^*$Research partially supported by NSF grant \#DMS-1106627} 
\thanks{$^\dagger$Research partially supported by NSF grant \#DMS-0709248}
\subjclass[2010]{
	Primary		60K35;		
	Secondary	60H15, 		
				82C22.	 	
	}
\keywords{Interacting particles, reflected Brownian motion, fractional Brownian motion, 
rank-dependent diffusions, equilibrium fluctuation, stochastic heat equation.} 
\date{\today}

\begin{abstract}
We study the fluctuation of the Atlas model,
where a unit drift is assigned to the lowest ranked particle
among a semi-infinite ($ \N $-indexed) 
system of otherwise independent Brownian particles,
initiated according to a Poisson point process on $ \bbR_+ $. 
In this context, we show that the joint law of ranked particles,
after being centered and scaled by $t^{-1/4}$, 
converges as $t \to \infty$ to the Gaussian field corresponding to
the solution of the \ac{aSHE} on $\bbR_+$ 
with Neumann boundary condition at zero.
This allows us to express the asymptotic fluctuation 
of the lowest ranked particle in terms of a \ac{fbm}.
In particular, we prove a conjecture of Pal and Pitman \cite{pal08} about 
the asymptotic Gaussian fluctuation of the ranked particles.
\end{abstract}

\maketitle

\section{Introduction}
\label{sect:intro}

In this paper we study the infinite particles Atlas model.
That is, we consider the $ \bbR^{\N} $-valued process $\{X_i(t)\}_{i\in\N}$,
each coordinate performing an independent Brownian motion
except for the lowest ranked particle receiving a drift of strength $\gamma>0$.
For suitable initial conditions, this process 
is given by the unique weak solution of
\begin{align}\label{eq:CBP}
	dX_i(t) = \gamma \ind_\curBK{X_i(t)=X_{(0)}(t)} dt + dB_i(t),
	\quad
	i\in\N.
\end{align}
Hereafter 
$ B_i(t) $, $ i\in\bbZ_+ $, denote independent standard Brownian motions 
and $X_{(i)}(t)$, $ i\in\bbZ_+ $, denote the \emph{ranked} particles,
i.e.\ $ X_{(0)}(t) \leq X_{(1)}(t) \leq \ldots $.
More precisely,
recall that $ (x_i)\in\bbR^{\N} $ is rankable if
there exists a bijection $ \pi: \N\to\N $ (i.e.\ permutation)
such that $ x_{\pi(i)} \leq x_{\pi(j)} $ for all $ i\leq j \in\N $.
Such ranking permutation is unique up to ties, which we break in lexicographic order.
The equation \eqref{eq:CBP} is then well-defined
if $ (X_i(t))_{i\in\N} $ is rankable at all $ t\geq 0 $
with a measurable ranking permutation.

The Atlas model \eqref{eq:CBP}
is a special case of diffusions with \emph{rank dependent drifts}.
In finite dimensions,
such systems are studied in \cite{bass87},
motivated by questions in filtering theory,
and in \cite{fernholz02,karatzas09},
in the context of stochastic portfolio theory.
See also \cite{chatterjee10,chatterjee11,ichiba10,ichiba13,ichiba11},
for their ergodicity and sample path properties,
and \cite{dembo12,pal14}
for their large deviations properties as the dimension tends to infinity.
The Atlas model is a simple special case
(where the drift vector is specialized to $ (\gamma,0,\ldots,0) $)
that allows more detailed analysis.
In particular, Pal and Pitman \cite{pal08}
consider the infinite dimensional Atlas model \eqref{eq:CBP},
establishing well-posedness and the existence of an explicit invariant measure,
see also \cite{ichiba13,shkolnikov11}.

In this paper we study the long-time behavior of the ranked particles, 
in particular the lowest ranked particle.
This amounts to understanding competition between
the drift $ \gamma $ and the push-back from the bulk of particles (due to ranking).
These two effects act against each other,
and balance exactly at the critical density $ 2\gamma $.
More precisely, recall from \cite{pal08} that,
starting from $ \{X_{(i)}(0)\} \sim \PPP_+(2\gamma) $,
the Poisson Point Process with density $ 2\gamma $ on $ \bbR_+:=[0,\infty) $,  
\eqref{eq:CBP} admits a unique weak solution (which is rankable)
such that $ \{X_{(i)}(t)-X_{(0)}(0)\}_{i\in\N}$ retains the $\PPP_+(2\gamma) $ law
for all $ t\geq 0 $.
At this critical density,
we show that, for large $ t $ and for all $ i $, $ X_{(i)}(t) $
fluctuates at $ O(t^{1/4}) $,
and the joint law of the fluctuations of the particles
scales to a Gaussian field characterized by \ac{aSHE}.

Hereafter we fix $ \{X_i(t)\}_{i\in\N} $ 
to be the unique weak solution of \eqref{eq:CBP}
starting from $ \PPP_{+}(2\gamma) $.
With $ Y_i(t):= X_{(i+1)}(t) - X_{(i)}(t) $ denoting the $ i $-th gap,
such initial condition are equivalent to 
$ X_{(0)}(0)=0 $ and $ \{Y_i(0)\}_{i\in\N} \sim \bigotimes_{i\in\N}\Exp(2\gamma) $.
We consider the process
\begin{align}\label{eq:xi}
	\tagp_t(x) := 
	\e^{1/4} \sqBK{ \ie(x) - 2\gamma X_{(\ie(x))}(\e^{-1}t) },
	\quad
	\ie(x) := \floorBK{ (2\gamma \e^{1/2})^{-1} x }.
\end{align}
Recall that the the relevant solution of the \ac{aSHE}, \eqref{eq:SHE},
is invariant under the scaling
$ \tagP_t(x) \mapsto a^{1/4} \tagP_{t/a}(x/a^{1/2}) $,
which suggests the scaling of \eqref{eq:xi}.
Alternatively, this scaling can be understood as
choosing the diffusive scaling of $ (t,x) $ to respect $ B_i(\Cdot) $,
and choosing the $ \e^{1/4} $ factor to capture the Gaussian
fluctuation of $ \PPP_+(2\gamma\e^{-1/2}) $.

Let
$ p(x) = \Phi'(x) = (2\pi)^{-1/2} e^{-x^2/2} $ be the standard Gaussian density,
with $ p_t(x):=p(xt^{-1/2}) $ the heat kernel
and $\Phi_t(x)= \Phi(xt^{-1/2})$ the scaled error function.
We use $\pN_t(y,x):=p_t(y-x)+p_t(y+x)$ for the Neumann heat kernel
and
\begin{align}\label{eq:Psi}
	\Psi_t(y,x):= 2-\Phi_t(y-x)-\Phi_t(y+x).
\end{align}
Hereafter we endow the space of right-continuous-left-limit functions
on $ \bbR_+^2 $ the topology of uniform convergence on compact sets,
and use $ \Rightarrow $ to denote weak convergence of probability measures.
Our main result is as follows.

\begin{theorem}\label{thm:main}
Let $\tagP_\Cdot(\Cdot)$ denote the $C(\bbR_+^2,\bbR)$-valued 
centered Gaussian process with covariance
\begin{align}\label{eq:cov}
\begin{split}
	&
	\Ex \BK{ \tagP_{t}(x) \tagP_{t'}(x') }
\\
	&
	\quad 
	=
	2\gamma
	\BK{
		\int_0^\infty \Psi_{t}(y,x) \Psi_{t'}(y,x') dy
		+
		\int_0^{t \wedge t'} 
		\int_0^\infty \pN_{t-s}(y,x) \pN_{t'-s}(y,x') dy ds
	}.
\end{split}
\end{align}
Then, $\tagp_\Cdot(\Cdot)\Rightarrow \tagP_\Cdot(\Cdot)$, as $ \e\to 0 $.
\end{theorem}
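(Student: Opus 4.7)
The plan is to identify $\tagp$ with a rescaled counting-function fluctuation field, which naturally satisfies an approximate additive stochastic heat equation. Let $N_t(u) := \#\{i\in\N : X_i(t) \leq u\}$. Since $N_{\e^{-1}t}(X_{(\ie(x))}(\e^{-1}t)) = \ie(x) + 1$, one may rewrite
\begin{equation*}
\tagp_t(x) = \e^{1/4}\sqBK{ N_{\e^{-1}t}(X_{(\ie(x))}(\e^{-1}t)) - 2\gamma X_{(\ie(x))}(\e^{-1}t) } - \e^{1/4},
\end{equation*}
and define $\Xi^\e_t(\xi) := \e^{1/4}[N_{\e^{-1}t}(\e^{-1/2}\xi) - 2\gamma\e^{-1/2}\xi]$, so that $\tagp_t(x) = \Xi^\e_t(\e^{1/2}X_{(\ie(x))}(\e^{-1}t)) + o(1)$. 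A concentration estimate for partial sums of $\Exp(2\gamma)$-gaps shows that $\e^{1/2}X_{(\ie(x))}(\e^{-1}t)$ converges in probability to a deterministic function of $x$, uniformly on compact sets. Combined with a tightness bound on $\Xi^\e$, this reduces Theorem~\ref{thm:main} to the weak convergence of $\Xi^\e$ to the corresponding aSHE limit.

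\textbf{Martingale decomposition and identifying the PDE.} For a smooth test function $\phi$ of compact support, It\^{o}'s formula applied to $\sum_i\phi(X_i(t))$ yields
\begin{equation*}
\sum_i\phi(X_i(t)) - \sum_i\phi(X_i(0)) = \tfrac{1}{2}\int_0^t\sum_i\phi''(X_i(s))\,ds + \gamma\int_0^t\phi'(X_{(0)}(s))\,ds + M^\phi_t,
\end{equation*}
with local martingale $M^\phi_t := \sum_i\int_0^t\phi'(X_i(s))\,dB_i(s)$. Choosing $\phi$ to approximate $\ind_{(-\infty, u]}$ and passing to the distributional limit gives formally $\partial_t N_t(u) = \tfrac{1}{2}\partial_u^2 N_t(u) - \gamma\delta(X_{(0)}(t) - u) + \dot M_t(u)$, and in equilibrium at the critical density $2\gamma$ the bracket of $M$ converges to $2\gamma t\,\delta(u - u')$. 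After the rescaling $(t,u) \mapsto (\e^{-1}t, \e^{-1/2}\xi)$ and multiplication by $\e^{1/4}$, the field $\Xi^\e$ solves (up to vanishing errors) an approximate aSHE driven by $\sqrt{2\gamma}$ times space-time white noise, with a source term at the rescaled lowest-particle location $\e^{1/2}X_{(0)}(\e^{-1}t)$ arising from the Atlas drift.

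\textbf{Boundary condition, initial data, and tightness.} Since the lowest particle remains bounded in equilibrium ($X_{(0)} \sim \Exp(2\gamma)$), $\e^{1/2}X_{(0)}(\e^{-1}t) \to 0$ in probability, so the Atlas source localises at $\xi = 0$ in the limit, enforcing the Neumann condition $\partial_\xi \Xi_t(0) = 0$ on the limiting field. The centred Poisson initial data $\Xi^\e_0$ converges by the functional CLT to $\sqrt{2\gamma}\,B(\xi)$, with $B$ a Brownian motion on $\bbR_+$, and the mild-solution representation of the limit is
\begin{equation*}
\Xi_t(\xi) = \sqrt{2\gamma}\int_0^\infty \pN_t(y,\xi)\,dB(y) + \sqrt{2\gamma}\int_0^t\int_0^\infty \pN_{t-s}(y,\xi)\,W(dy,ds),
\end{equation*}
whose covariance, after integration by parts converts the first integral to $\Psi_t$-form, matches \eqref{eq:cov} up to the reparametrisation from the first paragraph. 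Tightness of $\tagp_\cdot(\cdot)$ in $C(\bbR_+^2,\bbR)$ follows from Kolmogorov-type moment bounds on space-time increments of $\Xi^\e$, obtained by applying Burkholder--Davis--Gundy to $M^\phi$ together with heat-kernel estimates on the deterministic part.

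\textbf{Main obstacle.} The hardest step is rigorously extracting the Neumann boundary condition from the Atlas drift. Because the source sits at $\e^{1/2}X_{(0)}(\e^{-1}t)$, which itself fluctuates at the critical scale $\e^{-1/4}$, there is a delicate chicken-and-egg issue between the a priori control of $X_{(0)}$ needed to localise the source and the conclusion of the theorem itself. Overcoming this will likely require independent quantitative bounds on $\sup_{s \leq \e^{-1}t}|X_{(0)}(s)|$ (e.g.\ via comparison with an auxiliary reflected Brownian system, or via Lyapunov functionals on the gap process), together with a careful interchange of limits between the smoothing parameter in $\phi$ and the scaling $\e \to 0$.
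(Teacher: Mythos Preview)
Your overall strategy coincides with the paper's: both analyze the fluctuation of the empirical measure via It\^o's formula, decompose into initial-data, martingale, and boundary contributions, and identify the limiting aSHE. Your $\Xi^\e$ is essentially the paper's $\flug_t(x)=\anglebk{\Qe_t,\ind_{(-\infty,x]}}$, and your martingale decomposition is Proposition~\ref{prop:Ito}.

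The step you correctly flag as the main obstacle---extracting the Neumann boundary condition from the Atlas drift---is also where your proposal lacks the decisive device. The mechanisms you suggest (comparison with a reflected system, Lyapunov functionals, interchanging the smoothing and scaling limits) are not what closes the argument. An a-priori bound $\sup_{s\le T}|\Xe_{(0)}(s)|=O(\e^{b})$ for every $b<1/4$ is indeed needed (this is Proposition~\ref{prop:Xotig}, obtained by applying the It\^o identity with the fixed test function $\sech$), but it is not sufficient: for a generic smooth $\phi$, the rescaled drift contribution $\e^{-1/4}\int_0^t\phi'(\Xe_{(0)}(s))\,ds$ is still of order $\e^{-1/4}$. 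The paper's resolution is to test not against a mollified indicator but against the specific backward-heat solution $\psi_s(y)=\Psi_{t+\delta-s}(y,x)$. This choice simultaneously (i) annihilates the bulk term, since $(\partial_s+\tfrac12\partial_{yy})\psi_s\equiv 0$, and (ii) produces a boundary integrand $\partial_s\Psi_s(y,x)$ that \emph{vanishes at $y=0$} (by the evenness of the Gaussian density). On the event $\{|\Xe_{(0)}|\le\e^{b'}\}$ the boundary term is then $O(\e^{-1/4}\cdot\e^{2b'})\to 0$ once $b'>1/8$ (proof of Proposition~\ref{prop:fluConvv}\ref{enu:conato}). Without a test function engineered to give this cancellation the boundary term does not close.

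Two smaller corrections. The claim ``$X_{(0)}\sim\Exp(2\gamma)$'' is false: only the gaps $Y_i$ are i.i.d.\ $\Exp(2\gamma)$, while $X_{(0)}(0)=0$ and the law of $X_{(0)}(t)$ is not stationary. And the paper does not link the counting fluctuation $\flug$ to $\tagp$ by a continuity/substitution argument as you propose; it instead uses the exact identities \eqref{eq:flugtagp}--\eqref{eq:tagpptagp} to write the discrepancy as $\e^{1/4}$ times a centred sum of gaps over a random index window, and then bounds that window by feeding back the already-established weak convergence of $\flug$ (Proposition~\ref{prop:fluxii}). This avoids any circularity between tightness of $\Xi^\e$ and control of the random evaluation point.
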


\begin{remark}\label{rmk:SHE}
The limiting process $ \tagP_\Cdot(\Cdot) $ can be equivalently 
characterized by the solution
of the \ac{aSHE} on $ \bbR_+ $,
\begin{align}\label{eq:SHE}
	\BK{ \partial_t - \frac{1}{2} \partial_{xx} } \tagP_t(x) 
	= (2\gamma)^{1/2} \dot{\scrW},
	\quad
	t, x >0,
\end{align}
with the initial condition $ \tagP_0(x) = (2\gamma)^{1/2}B(x) $
and a suitable boundary condition.
Here $ B(x) $ denotes a standard Brownian motion
and $ \scrW(t,x) $ denotes a $ 2 $-dimensional white noise,
independent of $ B(\Cdot) $.
In the course of proving Theorem~\ref{thm:main},
extracting the boundary condition requires 
a \emph{special choice} of the test function (see \eqref{eq:flu}).
From this, we end up with the \emph{Neumann boundary condition}.
That is, we declare the semi-group of \eqref{eq:SHE} to be $ \pN_t(y,x) $,
whereby obtaining $ \tagP_t(x) = \icc_t(x) + \mgg_t(x) $,
for
\begin{align}
	&
	\label{eq:icc}
	\icc_t(x) :=
	\int_0^\infty \pN_t(y,x) \tagP_0(y) dy	
	= 
	(2\gamma)^{1/2} 
	\int_0^\infty \Psi_t(y,x) dB(y),	
\\
	&
	\label{eq:mgg}
	\mgg_t(x)
	:=  
	(2\gamma)^{1/2} \int_0^t \int_0^\infty \pN_{t-s}(y,x) d\scrW(y,s).
\end{align}
The former and latter, measurable with respect to
$ B(\Cdot) $ and $ \scrW(\Cdot,\Cdot) $, respectively, are independent.
From \eqref{eq:icc} and \eqref{eq:mgg},
one then concludes the covariance as given in \eqref{eq:cov}.

In retrospect, the Neumann boundary condition represents the conservation of particles at $ x=0 $.
It is shown in \cite{cabezas15} that at the equilibrium density we consider here,
$ \sup_{s\in[0,t]}\{\e^{1/2} |X_{(0)}(\e^{-1}t)| \} \to 0 $ almost surely.
That is, at the scale $ \e^{-1/2} $ of space, the lowest rank particle stays very close to $ x=0 $.
Consequently, the flux at $ x=0 $ should be zero, which amounts to the Neumann boundary condition.
\end{remark}

\begin{remark}
If starting \eqref{eq:CBP} at deterministic equi-distant particle positions, 
i.e.\ $ X_{(i)}(0) = 2\gamma i $, 
one should naturally expect to end up 
with the limiting process $ \tagP_t(x) := \mgg_t(x) $
(corresponding to $ \tagP_0(x) = 0 $).
However, our proof of Theorem~\ref{thm:main} relies 
on the stationarity of $ \{X_{(i)}(\Cdot)-X_{(0)}(\Cdot)\} $
to simplify a-priori estimates,
and hence does not apply to this deterministic initial condition.
\end{remark}

An important consequence of Theorem~\ref{thm:main} is:
\begin{corollary}\label{cor:PP}
\begin{enumerate}[label=(\alph*)]
	\item[]
	\item \label{enu:fmb}
	Let $ B^{(H)}(\Cdot) $ denote the fractional Brownian motion 
	with Hurst parameter $ H $.
	As $ \e\to 0 $,
	$ \e^{-1/4} X_{(0)}(\e^{-1} \Cdot) $,
	the scaled fluctuation of the lowest ranked particle,
	weakly converges to $ (2/\pi)^{1/4}\gamma^{-1/2} B^{(1/4)}(\Cdot) $.
	\item \label{enu:PP}
	As $ t\to\infty $, $ (X_{(\ie(x))}(t) - X_{(\ie(x))}(0)) t^{-1/4} $
	weakly converges to a centered Gaussian with variance $ \sigma^2(x) $,
	satisfying $ \sigma(0)= (2/\pi)^{1/4} \gamma^{-1/2} $
	and $ \lim_{x\to\infty} \sigma(x) = (2\pi)^{-1/4} \gamma^{-1/2} $.
\end{enumerate}
\end{corollary}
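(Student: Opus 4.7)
The plan is to reduce both claims to Theorem~\ref{thm:main} followed by explicit evaluation of the covariance of $\tagP$ at selected points. From \eqref{eq:xi} together with $\ie(0)=0$ and $X_{(0)}(0)=0$, one has the pointwise identity
\begin{align*}
  \e^{1/4}\BK{X_{(\ie(x))}(\e^{-1}t)-X_{(\ie(x))}(0)}=-\tfrac{1}{2\gamma}\BK{\tagp_t(x)-\tagp_0(x)}.
\end{align*}
For part~\ref{enu:fmb}, specializing to $x=0$ yields $\e^{1/4}X_{(0)}(\e^{-1}\Cdot)=-(2\gamma)^{-1}\tagp_\Cdot(0)$, and restricting Theorem~\ref{thm:main} to the slice $\{x=0\}$, on which $\tagP$ is continuous, gives weak convergence to $-(2\gamma)^{-1}\tagP_\Cdot(0)$. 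For part~\ref{enu:PP}, setting $\e:=1/t$ (so that $\e\to 0$ corresponds to $t\to\infty$) rewrites the left side as $t^{-1/4}(X_{(\ie(x))}(t)-X_{(\ie(x))}(0))$; Theorem~\ref{thm:main}, evaluated at the two points $(0,x)$ and $(1,x)$, then produces the weak limit $-(2\gamma)^{-1}(\tagP_1(x)-\tagP_0(x))$, a centered Gaussian with variance $\sigma^2(x)=(2\gamma)^{-2}\Ex[(\tagP_1(x)-\tagP_0(x))^2]$.

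It remains to evaluate these covariances using \eqref{eq:cov}. For part~\ref{enu:fmb}, set $x=x'=0$: $\Psi_t(y,0)=2(1-\Phi(y/\sqrt t))$ and $\pN_t(y,0)=2p_t(y)$. The martingale contribution is handled by the Gaussian identity $\int_0^\infty p_{t-s}(y)p_{t'-s}(y)dy=(2\sqrt{2\pi(t+t'-2s)})^{-1}$ and a one-line $s$-integration, which yields $\tfrac{4\gamma}{\sqrt{2\pi}}\BK{\sqrt{t+t'}-\sqrt{|t-t'|}}$. For the initial-data contribution, setting $J(\alpha,\beta):=\int_0^\infty(1-\Phi(y/\alpha))(1-\Phi(y/\beta))dy$ and differentiating in $\alpha$ reduces the inner integral to the elementary $\int_0^\infty p(u)p(\alpha u/\beta)du=\beta/(2\sqrt{2\pi(\alpha^2+\beta^2)})$; integration back, with $J(0,\beta)=0$, gives $J(\alpha,\beta)=\tfrac{1}{2\sqrt{2\pi}}(\alpha+\beta-\sqrt{\alpha^2+\beta^2})$, so the contribution equals $\tfrac{4\gamma}{\sqrt{2\pi}}\BK{\sqrt t+\sqrt{t'}-\sqrt{t+t'}}$. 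The two $\sqrt{t+t'}$ terms cancel, leaving
\begin{align*}
\Ex\BK{\tagP_t(0)\tagP_{t'}(0)}=\tfrac{4\gamma}{\sqrt{2\pi}}\BK{\sqrt t+\sqrt{t'}-\sqrt{|t-t'|}},
\end{align*}
which, after multiplication by $(2\gamma)^{-2}$, is precisely the covariance of $(2/\pi)^{1/4}\gamma^{-1/2}B^{(1/4)}(\Cdot)$.

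For part~\ref{enu:PP}, from \eqref{eq:icc} one has $\tagP_0(x)=(2\gamma)^{1/2}B(x)$ since $\Psi_0(y,x)=\ind_{\{y<x\}}$ on $\bbR_+$; in particular $\tagP_0(0)=0$, so $\sigma^2(0)=(2\gamma)^{-2}\Ex\tagP_1(0)^2=\sqrt{2/\pi}/\gamma$ from the previous paragraph, yielding $\sigma(0)=(2/\pi)^{1/4}\gamma^{-1/2}$. For the limit $x\to\infty$, expand
\begin{align*}
\Ex\sqBK{(\tagP_1(x)-\tagP_0(x))^2}=2\gamma\BK{\int_0^\infty(\Psi_1(y,x)-\ind_{\{y<x\}})^2 dy+\int_0^1\int_0^\infty \pN_{1-s}(y,x)^2\, dy\,ds}
\end{align*}
and change variables $u=y-x$ (respectively $u=(y-x)/\sqrt{1-s}$) in the two integrands: the reflection pieces $p_{1-s}(y+x)$ vanish on bounded $u$-sets, so dominated convergence reduces each integral to its whole-line analogue, which totals $\tfrac{2}{\sqrt{2\pi}}$ by the same Gaussian identities. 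Hence $\lim_{x\to\infty}\sigma^2(x)=(2\pi)^{-1/2}\gamma^{-1}$, i.e.\ $\sigma(\infty)=(2\pi)^{-1/4}\gamma^{-1/2}$. The main technical work is the two closed-form Gaussian integrals in paragraph two; the passage to the limits in $\e$ and $x$ is routine given Theorem~\ref{thm:main} and exponential decay of the Gaussian kernel.
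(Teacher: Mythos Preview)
Your proof is correct and follows essentially the same route as the paper. The paper's argument is extremely terse: it simply asserts that from \eqref{eq:cov} one deduces the covariances \eqref{eq:cov:tag0}--\eqref{eq:cov:tag00} of $\calK_t(x):=(2\gamma)^{-1}(\tagP_t(x)-\tagP_0(x))$ at $x=0$ and $x\to\infty$, and that the corollary follows. You supply exactly the computations the paper omits---the closed form for $J(\alpha,\beta)$, the cancellation of the $\sqrt{t+t'}$ terms, and the dominated-convergence passage to $x\to\infty$---and your reduction via $\e^{1/4}(X_{(\ie(x))}(\e^{-1}t)-X_{(\ie(x))}(0))=-(2\gamma)^{-1}(\tagp_t(x)-\tagp_0(x))$ is precisely the paper's $\calK$.
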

\noindent
Indeed, it is not difficult to deduce from \eqref{eq:cov} the covariance of
the center Gaussian process
$ 
	\calK_\Cdot(x) :=
	(2\gamma)^{-1}(\tagP_\Cdot(x) - \tagP_0(x))
$
for the special case of $ x=0 $ and $ x\to\infty $,
and to arrive at
\begin{align}
	&
	\label{eq:cov:tag0}
	\Ex\BK{ \calK_t(0) \calK_{t'}(0) } =
	\gamma^{-1}\BK{2\pi}^{-1/2} \BK{ t^{1/2} + (t')^{1/2} - |t-t'|^{1/2} },
\\
	&
	\label{eq:cov:tag00}
	\lim_{x\to\infty} \Ex\BK{ \calK_t(x) \calK_{t'}(x) }
	=
	\gamma^{-1}\BK{8\pi}^{-1/2} \BK{ t^{1/2} + (t')^{1/2} - |t-t'|^{1/2} }.
\end{align}
From \eqref{eq:cov:tag0}--\eqref{eq:cov:tag00} Corollary~\ref{cor:PP} immediately follows.

Theorem~\ref{thm:main} is the first result of asymptotic fluctuations of \eqref{eq:CBP},
with Corollary~\ref{cor:PP}\ref{enu:PP}
resolving the conjecture of Pal and Pitman \cite[Conjecture 3]{pal08}.
Further, Theorem~\ref{thm:main} establishes 
the previously undiscovered connection of \eqref{eq:CBP} to \ac{aSHE}.

\begin{remark}
In \cite{cabezas15}, the hydrodynamic limits of the Atlas model \eqref{eq:CBP} is studied. 
For out-of-equilibrium initial conditions,
it is shown that $ \e^{1/2} X_{(0)}(\e^{-1}\Cdot) $
converges to a deterministic limit described by the one-sided Stefan's problem.
For the symmetric simple exclusion process on $ \bbZ $,
\cite{landim98} shows that the hydrodynamic limit of 
a tagged particle is described by the two-sided Stefan's problem.
For the same model, \cite{landim00} shows that the fluctuation scales to 
a generalized Ornstein--Uhlenbeck process related to \ac{aSHE}.
\end{remark}

\begin{remark}\label{rmk:harris}
Harris \cite{harris65} introduces a closely related model
of i.i.d.\ $ \bbZ $-indexed Brownian particles $ B_i(t) $,
which can be regarded as the bulk version of \eqref{eq:CBP}.
Using an explicit formula for the law of $ B_{(0)}(t) $,
he shows that at equilibrium with density $ 2\gamma $,
$ 
	\lim_{t\to \infty} t^{-1/4} ( B_{(0)}(t) - B_{(0)}(0) ) 
	\Rightarrow 
	(2\pi)^{-1/4} \gamma^{-1/2} B(1).
$
This result is further extended by  \cite{duerr85}
to the functional convergence 
$ 
	\e^{1/4} ( B_{(0)}(\e^{-1}\Cdot) - B_{(0)}(0) ) 
	\Rightarrow
	(2\pi)^{-1/4} \gamma^{-1/2} B^{(1/4)}(\Cdot) 
$.

Intuitively,
we expect the Atlas model to behavior similarly to the Harris model 
once we match the equilibrium density.
This is indeed confirmed in \eqref{eq:cov:tag00}.
That is, at the bulk ($ x\to\infty $)
the asymptotic fluctuation of the two systems are approximately equal,
to $ (2\pi)^{-1/4} \gamma^{-1/2} B^{1/4}(\Cdot) $.
Somewhat unexpectedly, as shown in Corollary~\ref{cor:PP}\ref{enu:fmb},
the \ac{fbm} fluctuation also appears at $ x=0 $, 
but with a \emph{different} prefactor.
\end{remark}

\begin{remark}
Applying our technique to the Harris model,
one may rederive the results of \cite{duerr85,harris65}.
This provides an explanation of the scaling and the \ac{fbm} limit
as the fluctuation of \ac{aSHE} at $ x=0 $.
Specifically, the scaling limit of the Harris model 
should be \ac{aSHE} on $ \bbR $ with no boundary condition.
Since no drift presents in the Harris model,
the latter scaling limit 
could be deduced directly from the time evolution equation.
\end{remark}

Our strategy of proving Theorem~\ref{thm:main} is to focus on the empirical measure.
While this strategy has been widely used for interacting particle systems,
in the context of Atlas model, 
or more generally diffusions with rank-dependent drifts,
analyzing empirical measure is a new approach 
that has only been used here and in \cite{cabezas15}.
It completely bypasses the need of local times,
which is a major a challenge when analyzing diffusions with rank-dependent drifts.

To define the empirical measure,
we consider $ w(y):=e^{-y}\wedge 1 $, 
$\Qnorm{\phi} := \sup_{y\in\bbR} |\phi(y)|/w(y)$,
and
$
	\Qsp := \curBK{ \phi\in L^\infty(\bbR) : 
	\Qnorm{\phi(y)}<\infty }.
$
Let $ \Xe_{i}(t) := \e^{1/2} X_{i}(\e^{-1}t) $,
$ \Xe_{(i)}(t) := \e^{1/2} X_{(i)}(\e^{-1}t) $
and, for any $ \phi\in\Qsp $, let
\begin{align}
	&
	\label{eq:EM}
	\angleBK{\EM_t,\phi} 
	:= 
	\sum_{i=0}^\infty \phi(\Xe_i(t)),
\\
	&
	\label{eq:Q}
	\angleBK{\Qe_t,\phi}
	:=
	\e^{1/4} 
	\BK{ 
		\angleBK{ \EM_t, \phi } - 2\gamma \e^{-1/2} \int_0^\infty \phi(y) dy
		},
\end{align}
which are well-defined (see Lemma~\ref{lem:LebBd}).
As we are at equilibrium, $ \EM_t $ is a $ \PPP_+(2\gamma\e^{-1/2}) $ translated by $ \Xe_{(0)}(t) $,
so $ \Qe_t $ captures the Gaussian fluctuation of $ \PPP_+(2\gamma\e^{-1/2}) $
around $ 2\gamma\e^{-1/2} \ind_{\bbR_+}(y) dy $.

Under this framework,
the main challenge of proving Theorem~\ref{thm:main} is to choose
the test function $ \flu_t(x) $ that 
\begin{enumerate*}[label=\itshape\roman*\upshape)]
\item identifies the relevant boundary condition;
and \item relates itself to the process $ \tagp_t(x) $.
\end{enumerate*}
With
\begin{align}
	\label{eq:flu}
	\flu_t(x) := \anglebk{\Qe_t,\Psi_\delta(\Cdot,x)},
\end{align}
establishing (\textit{ii})
amounts to approximating the displacement of a ranked particle
by the net flux of particles,
which we achieve by using stationarity.
In Sections~\ref{sect:fluConv} and \ref{sect:fluxi} we prove
Propositions~\ref{prop:fluConv} and \ref{prop:fluxi}, respectively, 
from which Theorem~\ref{thm:main} follows immediately.

\begin{proposition}\label{prop:fluConv}
Fix any $ b\in(0,1/4) $.
As $ (\e,\delta)\to (0,0) $, 
$\flu_\Cdot(\Cdot+\e^{b})\Rightarrow \tagP_\Cdot(\Cdot)$,
for $ \tagP_t(x) $ given as in Theorem~\ref{thm:main}.
\end{proposition}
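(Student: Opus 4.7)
My plan is to apply It\^o's formula to the time-varying pairing $s\mapsto\anglebk{\Qe_s,\Psi_{t+\delta-s}(\cdot,x+\e^b)}$ on $s\in[0,t]$, with $(t,x)$ fixed. The key identity $\partial_\tau\Psi_\tau(y,x)=\tfrac12\partial_{yy}\Psi_\tau(y,x)$ ensures that the Brownian contribution to the generator of $\{\Xe_i\}$ cancels the time derivative of the test function; what remains is the Atlas drift on the lowest particle, contributing $-\gamma\e^{-1/2}\pN_{t+\delta-s}(\Xe_{(0)}(s),x+\e^b)\,ds$ (using $\partial_y\Psi_\tau=-\pN_\tau$), together with the correction $+2\gamma\e^{-1/2}p_{t+\delta-s}(x+\e^b)\,ds$ from the $s$-derivative of the centering (using $\partial_\tau\int_0^\infty\Psi_\tau(y,x)\,dy=p_\tau(x)$). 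Combining via $\pN_\tau(0,x)=2p_\tau(x)$ yields the decomposition
\begin{align*}
\flu_t(x+\e^b)
&= \anglebk{\Qe_0,\Psi_{t+\delta}(\cdot,x+\e^b)} + \mathcal{R}^\delta_t(x) + \mathcal{M}^\delta_t(x),
\\
\mathcal{R}^\delta_t(x)
&:= \gamma\e^{-1/4}\!\int_0^t\!\bigl[\pN_{t+\delta-s}(0,x+\e^b)-\pN_{t+\delta-s}(\Xe_{(0)}(s),x+\e^b)\bigr]\,ds,
\end{align*}
where $\mathcal{M}^\delta_t(x)=-\e^{1/4}\sum_i\int_0^t\pN_{t+\delta-s}(\Xe_i(s),x+\e^b)\,d\widetilde B_i(s)$ is a Brownian stochastic integral whose covariance across $((t,x),(t',x'))$ is $\e^{1/2}\int_0^{t\wedge t'}\anglebk{\EM_s,\pN_{t+\delta-s}(\cdot,x+\e^b)\pN_{t'+\delta-s}(\cdot,x'+\e^b)}\,ds$.

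\textbf{Convergence of the three pieces.}
The initial term $\anglebk{\Qe_0,\Psi_{t+\delta}(\cdot,x+\e^b)}$ is the $\e^{1/4}$-rescaled fluctuation of a $\PPP_+(2\gamma\e^{-1/2})$ tested against $\Psi_{t+\delta}(\cdot,x+\e^b)$; by the Poisson-to-Gaussian CLT (via characteristic functions) it converges jointly in $(t,x)$ to the centered Gaussian field $\icc_t(x)=(2\gamma)^{1/2}\int_0^\infty\Psi_t(y,x)\,dB(y)$, matching the covariance in \eqref{eq:icc}. For $\mathcal{M}^\delta_t(x)$, stationarity of the gap process yields $\e^{1/2}\anglebk{\EM_s,f}\to 2\gamma\int_0^\infty f(y)\,dy$ in probability for bounded, rapidly-decaying $f$; so the covariance of $\mathcal{M}^\delta_\cdot(\cdot)$ converges to $2\gamma\int_0^{t\wedge t'}\int_0^\infty\pN_{t-s}(y,x)\pN_{t'-s}(y,x')\,dy\,ds$, matching the covariance of $\mgg$ from \eqref{eq:cov}. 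A martingale CLT (e.g.\ Rebolledo, with Lindeberg verified by the pathwise boundedness of $\pN_{t+\delta-s}$ for fixed $\delta>0$) then yields $\mathcal{M}^\delta_t(x)\Rightarrow\mgg_t(x)$ jointly in $(t,x)$. The asymptotic independence of $\icc$ and $\mgg$ follows from the $\sigma(\EM_0)$-measurability of the initial term versus the driving of $\mathcal{M}^\delta$ by the Brownian increments after time $0$, which are independent of $\EM_0$.

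\textbf{Main obstacle: the drift-error estimate.}
The delicate step is showing $\mathcal{R}^\delta_t(x)\to 0$, and this is exactly where the $\e^b$ shift does its essential work. Since $\pN_\tau(\cdot,x+\e^b)$ is even in its first variable, the first-order Taylor expansion at $y=0$ vanishes, and
\begin{align*}
\bigl|\pN_\tau(\Xe_{(0)}(s),x+\e^b)-\pN_\tau(0,x+\e^b)\bigr|\;\leq\;\tfrac12\,\Xe_{(0)}(s)^2\!\!\sup_{|y|\leq|\Xe_{(0)}(s)|}\!\!\bigl|\partial_{yy}\pN_\tau(y,x+\e^b)\bigr|.
\end{align*}
Gaussian decay of $\partial_{yy}\pN_\tau(\cdot,x+\e^b)$ away from the origin gives $\int_0^t|\partial_{yy}\pN_{t+\delta-s}(\cdot,x+\e^b)|\,ds\lesssim(x+\e^b)^{-1}\leq\e^{-b}$, and the a priori estimate $\sup_{s\leq t}|\Xe_{(0)}(s)|^2=O_{\mathbf{P}}(\e^{1/2})$ from \cite{cabezas15} then bounds $|\mathcal{R}^\delta_t(x)|\lesssim\gamma\e^{-1/4}\cdot\e^{1/2}\cdot\e^{-b}=\gamma\e^{1/4-b}\to 0$ precisely because $b<1/4$. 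The main obstacle is this balancing of the three small parameters $\e^{1/4}$ (size of $\Xe_{(0)}$), $\e^b$ (spatial separation from the origin), and $\delta$ (temporal smoothing) against the $\tau\downarrow 0$ singularity of $\partial_{yy}\pN_\tau$; the constraint $b<1/4$ is sharp for this argument. Tightness of $\flu_\cdot(\cdot+\e^b)$ in $C(\bbR_+^2,\bbR)$ then follows from moment bounds on joint increments along each of the three terms above via Kolmogorov's criterion, with the expected $C^{1/4-,\,1/2-}$ regularity of the additive-SHE limit $\tagP$.
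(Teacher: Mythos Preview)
Your proposal is essentially the paper's own proof: the same It\^o decomposition into initial term $\icQ$, martingale $\mg$, and drift remainder $\ato$ (your $\mathcal{R}^\delta$), followed by Poisson CLT for the first, martingale CLT for the second, independence from measurability, and Kolmogorov--Chentsov for tightness.

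One point worth noting: you frame the $\e^b$ shift as essential to the drift estimate, bounding $\int_0^t|\partial_{yy}\pN_{t+\delta-s}(\cdot,x+\e^b)|\,ds\lesssim(x+\e^b)^{-1}$. The paper's treatment (Section~4.1) instead integrates in $s$ first via the identity $|\partial_s\Psi_s(y,x)|=-\sign(y)\partial_s\Psi_s(y,x)$, reducing to $\e^{-1/4}\int_{-\e^{b'}}^{\e^{b'}}|\Psi_{T+1+\delta}(y,x+\e^b)-1|\,dy\leq C\e^{-1/4+2b'}$ with $b'\in(1/8,1/4)$, a bound that is \emph{uniform in $x\geq 0$} and does not use the shift at all. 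So in the paper the $\e^b$ shift is needed only for Proposition~\ref{prop:fluxi}, not here. Your second-derivative route also works, but the integral is actually $O(|\log\e|)$ rather than $O(\e^{-b})$, and $\sup_s|\Xe_{(0)}(s)|^2=O_{\mathbf P}(\e^{2b'})$ for $b'<1/4$ (Proposition~\ref{prop:Xotig}) rather than exactly $O_{\mathbf P}(\e^{1/2})$; with these corrections your bound becomes $\e^{-1/4+2b'}|\log\e|\to 0$, matching the paper.
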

\begin{proposition}\label{prop:fluxi}
Fix any $ a\in(1/2,\infty) $ and $ b\in(0,1/4) $.
As $ \e\to 0 $,
$ \flux_\Cdot(\Cdot+\e^{b}) - \tagp_\Cdot(\Cdot)\Rightarrow 0 $.
\end{proposition}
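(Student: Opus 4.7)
The plan is to show $\flux_t(\tilde x) - \tagp_t(x) \to 0$ with $\tilde x := x + \e^b$ and $n := \ie(x)$, in the uniform-on-compact topology, by passing through the compensated empirical counting function
\[
Z_t(y) := \anglebk{\Qe_t, \ind_{[0,y]}} = \e^{1/4}\bigl(N_t(y) - 2\gamma\e^{-1/2}y\bigr), \qquad N_t(y) := \#\{i:\Xe_i(t) \le y\}.
\]
The crucial identity is the duality $N_t(\Xe_{(n)}(t)) = n+1$, which gives exactly $Z_t(\Xe_{(n)}(t)) = \tagp_t(x) + \e^{1/4}$. Writing
\[
\flux_t(\tilde x) - \tagp_t(x) = \underbrace{\anglebk{\Qe_t,\Psi_{\e^a}(\cdot,\tilde x) - \ind_{[0,\tilde x]}}}_{\text{(I)}} + \underbrace{Z_t(\tilde x) - Z_t(\Xe_{(n)}(t))}_{\text{(II)}} + \e^{1/4},
\]
the goal is to control (I) and (II) in $L^2$.

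Term (I) combines a smoothing error with a boundary correction. Calling $\Psi_0$ the pointwise limit of $\Psi_{\e^a}$, the smoothing error $\anglebk{\Qe_t, \Psi_{\e^a}(\cdot,\tilde x) - \Psi_0(\cdot,\tilde x)}$ involves a test function concentrated in a window of width $O(\e^{a/2})$ around $y = \tilde x$; at the equilibrium Poisson intensity $2\gamma\e^{-1/2}$, both its random $L^2$-fluctuation (of order $\e^{a/4}$) and the deterministic background shift $\e^{1/4} \cdot 2\gamma\e^{-1/2}\int_0^\infty(\Psi_{\e^a}-\Psi_0)\,dy$ (of order $\e^{a/2 - 1/4}$) vanish exactly because $a > 1/2$. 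The boundary correction $\anglebk{\Qe_t,\Psi_0(\cdot,\tilde x) - \ind_{[0,\tilde x]}}$ has integrand supported on $\bbR_-$; but by the remark following Theorem~\ref{thm:main} $|\Xe_{(0)}(t)| = O(\e^{1/4}) \ll \e^b \le \tilde x$, so with overwhelming probability only the lowest particle can lie there, contributing $O(\e^{1/4})$, while the background integrals over $\bbR_+$ agree.

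Term (II) is the main point and uses stationarity. Conditioning on $\Xe_{(n)}(t)=a$, the memorylessness of the exponential gaps in the equilibrium description $\{\Xe_{(i)}(t) - \Xe_{(0)}(t)\}_{i\ge 0} \sim \PPP_+(2\gamma\e^{-1/2})$ yields that the particles of rank above $n$ form a $\PPP_+(2\gamma\e^{-1/2})$ on $(a,\infty)$, independent of $\{\Xe_{(i)}(t) : i \le n\}$ (analogously on $(-\infty,a)$ for ranks below $n$). Combined with $N_t(a) = n+1$, this gives the compensated-Poisson variance identity
\[
\Ex\!\left[(Z_t(\tilde x) - Z_t(\Xe_{(n)}(t)))^2 \,\big|\, \Xe_{(n)}(t)=a\right] = 2\gamma|\tilde x - a|.
\]
Since $\Xe_{(n)}(t)$ is concentrated at its equilibrium value with $L^1$-fluctuation $O(\e^{1/4})$, and the floor in $\ie(x)$ contributes $O(\e^{1/2})$, both dominated by $\e^b$ for $b < 1/4$, one obtains $\Ex|\tilde x - \Xe_{(n)}(t)| = O(\e^b)$, and hence $\Ex[\text{(II)}^2] = O(\e^b) \to 0$ since $b>0$.

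For functional convergence, these pointwise $L^2$ estimates are promoted to uniform-on-compact convergence by Kolmogorov-type continuity bounds on both $\flux$ and $\tagp$ in the $(t,x)$ variables, at the regularity of the limiting field $\tagP$ (Hölder-$1/2^-$ in $x$, Hölder-$1/4^-$ in $t$ in $L^2$). The main obstacle is term (II): the compensated counting process $Z_t$ is evaluated at the \emph{random} point $\Xe_{(n)}(t)$, so one needs the Palm/stationarity description of the equilibrium $\PPP$ to reduce this to the deterministic variance bound above---this is precisely the stationarity-based approximation of the displacement of a ranked particle by the net particle flux alluded to in the introduction. A secondary point is the uniformity of all estimates over compact $(t,x)$, which is what the shift $\e^b$ is tailored for, by keeping the test argument safely away from the boundary $x = 0$ where $\Xe_{(0)}(t)$ can momentarily stray negative.
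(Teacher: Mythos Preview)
Your pointwise variance computation via memorylessness is correct and is a genuinely different route from the paper's: where the paper inserts the intermediate process $\tagpp$ and decomposes $\flug_t(\tilde x)-\tagp_t(x)$ into two $\calDe$-terms plus $\rho^\e$ (equations~\eqref{eq:flugtagp}--\eqref{eq:tagpptagp}), you go directly, observing that conditional on $\Xe_{(n)}(t)=a$ with $\tilde x>a$ the excess count is Poisson. This is cleaner for a single $(t,x)$. Two corrections, however. First, the indicator must be $\ind_{(-\infty,\tilde x]}$, not $\ind_{[0,\tilde x]}$: your $Z_t$ is internally inconsistent as written, and more importantly the boundary piece $\anglebk{\Qe_t,\Psi_0(\cdot,\tilde x)-\ind_{[0,\tilde x]}}$ picks up $\e^{1/4}\#\{i:\Xe_i(t)<0\}$; since $\Xe_{(0)}(t)$ can be as negative as $-\e^{b'}$ for any $b'<1/4$, there are typically $O(\e^{b'-1/2})$ particles below zero, contributing $O(\e^{b'-1/4})$, which does not vanish. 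With $\ind_{(-\infty,\tilde x]}$ the boundary remainder is supported on $(-\infty,-\tilde x]$, which contains no particles once $|\Xe_{(0)}(t)|\le\e^{b'}$ for some $b'\in(b,1/4)$ (Proposition~\ref{prop:Xotig}). Second, the ``analogously on $(-\infty,a)$'' for ranks below $n$ is not literally true---those $n$ particles are not a PPP after conditioning on $\Xe_{(n)}(t)$---but since $\tilde x>\Xe_{(n)}(t)$ with overwhelming probability (because $b<1/4$), this case can simply be discarded.

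The substantive gap is the promotion from pointwise $L^2$ to uniform-on-compacts. Your appeal to ``Kolmogorov-type continuity bounds on $\tagp$'' does not close: $x\mapsto\tagp_t(x)$ is piecewise constant with jumps $\e^{1/4}|1-2\gamma Y_i(\e^{-1}t)|$, so the standard criterion fails, and controlling the maximal jump uniformly in $t$ over $[0,T]$ is itself nontrivial. The paper does not attempt tightness of $\tagp$ directly; instead it (i) discretizes time to mesh $\e$ and controls the oscillation of $\flug$, $\tagpp$, $\tagp$ between mesh points (equations~\eqref{eq:locxi}--\eqref{eq:locG}), (ii) proves a uniform bound on $\e^{1/4}|\calDe(j,j',t_k)|$ over a finite index set (Lemma~\ref{lem:calD}), and (iii) constrains that index set by invoking the \emph{already-proved} convergence of $\flux$ (Proposition~\ref{prop:fluConv} via Proposition~\ref{prop:fluxii}\ref{enu:fluflug}) to get $|\I_t(x)-\I_0(x)|\le\e^{-\mu'}$ for $\mu'>1/4$ (see~\eqref{eq:flux2:J}). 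Your conditional-variance identity could in principle be upgraded to all moments and combined with a union bound over a discrete grid, but you would still need the oscillation control of step (i), and that is where most of the work in Section~\ref{sect:fluxi} actually lies.
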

\section{Outline of the Proof of Propositions~\ref{prop:fluConv} and \ref{prop:fluxi}}
\label{sect:outline}

Without lost of generality,
we scale the drift $ \gamma>0 $ to unity 
by $ X_i(t) \mapsto \gamma X_i(\gamma^{-2}t) $.
Hereafter,
we \emph{fix $ \gamma:=1 $} and use $ C(a,b,\ldots) $ 
to denote generic positive finite (deterministic) constant
that depends only on the designated variables.

We proceed to describe the time evolution of $ \Qe_t $.
To this end, let
\begin{align*}
	&
	\Qsp_T :=
	\curBK{
		\psi_\Cdot(\Cdot) \in C^2\BK{ \bbR\times[0,T] }:
		\QTnorm{\psi} < \infty
		},
\\
	&
	\QTnorm{ \psi }
	:=
	\sup_{t\in[0,T]} 
	\BK{
		\Qnorm{ \partial_t \psi_t }
		+ \Qnorm{ \partial_{x} \psi_t }
		+ \Qnorm{ \partial_{xx} \psi_t }
		+ \Qnorm{ \psi_t }
	}.
\end{align*}
We decompose $ \Qe_t = \Ae_t+\Bul_t $, where
\begin{align}
	\label{eq:A}
	\angleBK{\Ae_t,\phi} := -2\e^{-1/4} \int_0^{\Xe_{(0)}(t)} \phi(y) dy
\end{align}
records the fluctuation of the lowest ranked particle,
and
\begin{align}
	\label{eq:V}
	\angleBK{\Bul_t,\phi} 
	:= 
	\e^{1/4} \BK{
		\angleBK{ \EM_t, \phi }
		-2 \e^{-1/2} \int_{\Xe_{(0)}(t)}^\infty \phi(y) dy
		}
\end{align}
accounts for the fluctuations of the bulk of particles.
For any $ \psi\in\Qsp_T $ and $ t_0\in[0,T] $, let
\begin{align}\label{eq:Mek}
	\Me_{t_0,t}(\psi,k) := \sum_{i=0}^k \int_{t_0}^t \partial_y\psi_s\BK{\Xe_{i}(s)} d\Be_i(s),
\end{align}
which is a $ C([t_0,T],\bbR) $-valued martingale in $ t $.

\begin{proposition}\label{prop:Ito}
For any $ T\in\bbR_+ $, $ t_0\in[0,T] $ and $\psi\in\Qsp_T$,
there exists a $ C([t_0,T],\bbR) $-valued martingale $ \Me_{t_0,\Cdot}(\psi,\infty) $ such that,
for all $ q\geq 1 $, 
\begin{align}\label{eq:Mcon}
	\BVert \sup_{t\in[t_0,T]} |\Me_{t_0,t}(\psi,k)-\Me_{t_0,t}(\psi,\infty)| \BVert_q \to 0.
\end{align}
Furthermore, almost surely
\begin{align}\label{eq:Ito}
\begin{split}
	&
	\angleBK{\Qe_t,\psi_t} - \angleBK{\Qe_0,\psi_0}
\\
	&\quad
	= \int_0^t \angleBK{ \Bul_s, \BK{\partial_s+\frac12 \partial_{yy}} \psi_s } ds
	+ \int_0^t \angleBK{ \Ae_s, \partial_s \psi_s} ds
	+ \Me_{0,t}(\psi,\infty),
\end{split}
\end{align}
for all $t\in[0,T]$.
\end{proposition}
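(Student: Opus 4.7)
The plan is to apply Ito's formula to the finite truncation
\[
\e^{1/4}\sum_{i=0}^k \psi_t(\Xe_i(t)) - 2\e^{-1/4}\int_0^\infty \psi_t(y)\,dy
\]
for each $k\in\N$, then pass $k\to\infty$. Rescaling \eqref{eq:CBP} (with $\gamma=1$) via $\Xe_i(t) = \e^{1/2}X_i(\e^{-1}t)$ gives $d\Xe_i(t) = \e^{-1/2}\ind_{\{\Xe_i(t)=\Xe_{(0)}(t)\}}\,dt + d\Be_i(t)$ with $\Be_i(t) := \e^{1/2}B_i(\e^{-1}t)$ standard Brownian, and Ito yields
\begin{align*}
d\psi_t(\Xe_i(t))
&= \BK{\partial_t+\tfrac12\partial_{yy}}\psi_t(\Xe_i(t))\,dt
\\&\quad
+ \e^{-1/2}\partial_y\psi_t(\Xe_i(t))\ind_{\{\Xe_i=\Xe_{(0)}\}}\,dt
+ \partial_y\psi_t(\Xe_i(t))\,d\Be_i(t).
\end{align*}
Summing over $i\leq k$, the indicators collapse to select the rank-zero particle, producing the drift contribution $\e^{-1/2}\partial_y\psi_t(\Xe_{(0)}(t))\ind_{\{\pi(t,0)\leq k\}}$, where $\pi(t,0)$ denotes its label. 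The error $\ind_{\{\pi(t,0)>k\}}$ vanishes pathwise as $k\to\infty$.

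To close the algebraic identity, I would convert $\e^{1/4}\anglebk{\EM_t,\phi} = \anglebk{\Qe_t,\phi}+2\e^{-1/4}\int_0^\infty \phi\,dy$ with $\phi = \BK{\partial_t+\tfrac12\partial_{yy}}\psi_t$. This generates a boundary remainder $-\e^{-1/4}\partial_y\psi_t(0)$ from $\int_0^\infty\partial_{yy}\psi_t\,dy = -\partial_y\psi_t(0)$ (integration by parts is justified because $|\partial_y\psi_t(y)|\leq\QTnorm{\psi}\,w(y)$ decays exponentially at $+\infty$), together with $+\e^{-1/4}\partial_y\psi_t(\Xe_{(0)})$ from the drift. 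Decomposing $\Qe=\Ae+\Bul$ and invoking \eqref{eq:A},
\[
\tfrac12\anglebk{\Ae_s,\partial_{yy}\psi_s}
= -\e^{-1/4}\int_0^{\Xe_{(0)}(s)}\partial_{yy}\psi_s(y)\,dy
= \e^{-1/4}\BK{\partial_y\psi_s(0)-\partial_y\psi_s(\Xe_{(0)}(s))},
\]
so the extra $\tfrac12\anglebk{\Ae_s,\partial_{yy}\psi_s}$ pulled out of $\anglebk{\Ae_s,(\partial_s+\tfrac12\partial_{yy})\psi_s}$ exactly cancels both boundary remainders, producing \eqref{eq:Ito}.

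For the martingale convergence \eqref{eq:Mcon}, I would show that $\{\Me_{t_0,\Cdot}(\psi,k)\}_k$ is Cauchy in $L^q$ under the uniform norm on $[t_0,T]$. By \ac{BDG} and orthogonality of $\{\Be_i\}$,
\[
\Ex\sup_{t\in[t_0,T]}\absBK{\Me_{t_0,t}(\psi,k)-\Me_{t_0,t}(\psi,k')}^q
\leq C_q\,\Ex\BK{\int_{t_0}^T \sum_{k'<i\leq k}\absBK{\partial_y\psi_s(\Xe_i(s))}^2\,ds}^{q/2}.
\]
Since $|\partial_y\psi_s|\leq\QTnorm{\psi}\,w$, and since at each fixed $s$ the unordered set $\{\Xe_i(s)\}_{i\in\N}$ is, after translation, a $\PPP_+(2\e^{-1/2})$ by the stationarity of $\{X_{(i)}-X_{(0)}\}$, the full sum $\sum_{i\in\N}w(\Xe_i(s))^2$ has $L^q$ moments of all orders via standard Poisson-functional estimates. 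Dominated convergence in the label index then drives the tail to $0$ in $L^{q/2}$, which supplies the Cauchy property and identifies the continuous martingale $\Me_{t_0,\Cdot}(\psi,\infty)$ as the uniform limit.

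The main obstacle will be this label-indexed tail control: the truncation in \eqref{eq:Mek} is by label $i$, which a priori bears no relation to the ranking, whereas the moment bounds on $\sum w(\Xe_i)^2$ naturally pass through the ranked reordering with exponential gaps. I expect this to be dispatched either via an exchange-of-summation argument exploiting that the unordered set at each $s$ is rank-distributed by equilibrium, or by reusing the apparatus from Lemma~\ref{lem:LebBd} that already establishes well-definedness of $\anglebk{\Qe_t,\phi}$; in either case, making the dominated-convergence step uniform in $s\in[t_0,T]$ for every $q\geq 1$ is the genuine technical cost.
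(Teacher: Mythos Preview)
Your proposal is correct and follows the paper's proof: Ito on the label-truncated sum, the same boundary cancellation (the paper records it as $\e^{-1/4}\int_{\Xe_{(0)}(s)}^\infty\partial_{yy}\psi_s\,dy$ cancelling the drift term, which is your $\tfrac12\langle\Ae_s,\partial_{yy}\psi_s\rangle$ computation rearranged), and \ac{BDG} for the martingale convergence. For the label-indexed tail the paper takes exactly your second suggestion---the quantitative bound \eqref{eq:Leb:unranked} from Lemma~\ref{lem:LebBd}, packaged as Lemma~\ref{lem:Mapri}---rather than any dominated-convergence argument.
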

\begin{remark}
Proposition~\ref{prop:Ito} is established in Section~\ref{sect:ApriEst},
where we derive \eqref{eq:Ito} via Ito calculus.
In this derivation, the driving Brownian motions $ B_i(t) $, $ i\in\N $,
collectively contribute
\begin{align*}
	\BK{
		\e^{1/4} \angleBK{\EM_t,(\partial_t+2^{-1}\partial_{yy})\psi_t}
		- 2\e^{-1/4} \int_0^\infty \partial_s\psi_s(y) dy
		}dt
	+ d\Me_{0,t}(\psi,\infty)
\end{align*}
whereas the drift $ \gamma=1 $ at the lowest ranked particle contributes 
\begin{align*}
	\e^{-1/4} \partial_y\psi_s(\Xe_{(0)}(t)) dt
	=
	\BK{ - \e^{-1/4} \int_{\Xe_{(0)}(t)}^\infty \partial_{yy}\psi_s(y) dy } dt.
\end{align*}
These, when combined together, give the expression \eqref{eq:Ito}.
\end{remark}

Based on Proposition~\ref{prop:Ito},
in Section~\ref{sect:ApriEst} we establish the following 
a-priori estimate of $ \Xe_{(0)}(\Cdot) $.

\begin{proposition}\label{prop:Xotig}
Fixing any $ q>1 $, $ b\in[0,1/4) $ and $ T\in\bbR_+ $, 
we let $\taue_b:=\inf\{t\geq 0 : |\Xe_{(0)}(t)| \geq \e^{b} \}$.
There exists $C=C(T,b,q)<\infty$ such that, 
for all $\e\in(0,(2q)^{-2}]$,
\begin{align}\label{eq:taub}
	\Pro \BK{ \taue_b \leq T } \leq C \e^{(1/4-b)q-1}.
\end{align}
\end{proposition}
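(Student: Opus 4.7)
The plan is to apply the Itô formula of Proposition~\ref{prop:Ito} with a time-independent test function $\psi\in\Qsp$, exploit the stationarity of the gaps (so that $\{\Xe_{(i)}(s)-\Xe_{(0)}(s)\}\sim\PPP_+(2\e^{-1/2})$ at every fixed $s$) to obtain pointwise-in-$t$ moment bounds on $\angleBK{\Ae_t,\psi}$, and then reduce the supremum over $[0,T]$ to a maximum over a grid of size $\lceil T/\e\rceil$. The extra factor $\e^{-1}$ in the claimed exponent precisely reflects the cost of this union bound over $\sim\e^{-1}$ grid points.

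Concretely, I would fix an even, non-negative, smooth and rapidly-decaying $\psi\in\Qsp\cap C^2(\bbR)$ with $\psi(0)>0$, e.g.\ $\psi(y)=e^{-y^2/2}$. Substituting $\Qe_t=\Ae_t+\Bul_t$ into \eqref{eq:Ito} with $\psi_s\equiv\psi$, and using $\Ae_0=0$ (since $\Xe_{(0)}(0)=0$), yields
\begin{equation*}
	\angleBK{\Ae_t,\psi}
	=\angleBK{\Bul_0,\psi}-\angleBK{\Bul_t,\psi}
	+\int_0^t\angleBK{\Bul_s,\tfrac12\psi''}\,ds
	+\Me_{0,t}(\psi,\infty).
\end{equation*}
At each fixed $s$, conditional on $\Xe_{(0)}(s)$ the random variable $\angleBK{\Bul_s,\phi}$ is a centered Poisson functional against $\phi(\Xe_{(0)}(s)+\,\cdot\,)$ of intensity $2\e^{-1/2}$; direct cumulant computations give $\Ex|\angleBK{\Bul_s,\phi}|^{q}\leq C(\phi,q)$ uniformly in $s$ and $\e$. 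Fubini handles the time-integrated $\Bul$ term, while \ac{BDG} combined with the stationary computation $\Ex[\Me_{0,T}(\psi,\infty)]\leq C(T,\psi)$ (wherein the $\e^{1/2}$ scaling in the quadratic variation cancels the PPP intensity $2\e^{-1/2}$) bounds the martingale. Combining these yields $\Ex|\angleBK{\Ae_t,\psi}|^{q}\leq C(T,\psi,q)$ for every $t\in[0,T]$.

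From \eqref{eq:A} and continuity of $\psi$ at $0$, $|\angleBK{\Ae_t,\psi}|\geq c\,\e^{-1/4}|\Xe_{(0)}(t)|$ whenever $|\Xe_{(0)}(t)|\leq 1$, so Markov yields the pointwise estimate $\Pro(|\Xe_{(0)}(t)|\geq\tfrac{\e^b}{2})\leq C\e^{(1/4-b)q}$ for every $t\leq T$. I would then partition $[0,T]$ into $N=\lceil T/\e\rceil$ intervals $[t_k,t_{k+1}]$ of length $\e$ and union-bound
\begin{equation*}
	\Pro(\taue_b\leq T)
	\leq
	\sum_{k}\Pro\BK{|\Xe_{(0)}(t_k)|\geq\tfrac{\e^b}{2}}
	+\Pro\BK{\max_{k}\sup_{t\in[t_k,t_{k+1}]}|\Xe_{(0)}(t)-\Xe_{(0)}(t_k)|\geq\tfrac{\e^b}{2}};
\end{equation*}
the first sum is at most $(N+1)\cdot C\e^{(1/4-b)q}\leq C'\e^{(1/4-b)q-1}$, matching the claimed exponent.

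The main obstacle is controlling the oscillation term. On a length-$\e$ subinterval every Brownian increment is $O(\sqrt\e)$ and the total drift absorbed by the lowest rank is $\gamma\cdot\e^{-1/2}\cdot\e=O(\sqrt\e)$, so one expects $|\Xe_{(0)}(t)-\Xe_{(0)}(t_k)|=O(\sqrt{\e\log(1/\e)})\ll\e^b$, comfortably since $b<1/4<1/2$. However, because $\Xe_{(0)}$ is the running minimum over an infinite family of particles subject to relabeling at collisions, rigorously establishing this bound requires a comparison argument: the new lowest rank at time $t$ is dominated above by the old one (at $t_k$) plus the max drift-plus-Brownian increment among the $O(\e^{-1/2})$ nearby candidates (those within distance $1$, say, of $\Xe_{(0)}(t_k)$ by the PPP density at equilibrium), with a matching lower bound from the reflection off the next particle; taking the maximum of that many Gaussian increments of scale $\sqrt\e$ supplies the $O(\sqrt{\e\log(1/\e)})$ bound with super-polynomially small exceptional probability.
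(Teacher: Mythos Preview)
Your approach is essentially the paper's: apply Proposition~\ref{prop:Ito} with a time-independent $\psi$ to bound $\Vertbk{\anglebk{\Ae_t,\psi}}_q$, convert this via Markov to a pointwise bound $\Pro(|\Xe_{(0)}(t)|\geq \e^b/2)\leq C\e^{(1/4-b)q}$, then grid $[0,T]$ into $\sim\e^{-1}$ intervals of length $\e$ and take the union bound. The paper uses $\psi=\sech$ instead of a Gaussian, and bounds $\Vertbk{\anglebk{\Bul_s,\phi}}_q$ not by direct Poisson cumulant computations but via the summation-by-parts decomposition $\Bul=\Bulp-2\Re$ of Lemma~\ref{lem:Vdec} together with the estimates \eqref{eq:Rbd}--\eqref{eq:Wbd}; your cumulant route is arguably more direct here, while the paper's decomposition is reused later (e.g.\ in the proof of Proposition~\ref{prop:fluConvv}\ref{enu:conic}).

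The one place your sketch is genuinely weaker is the oscillation step. Your lower bound via ``reflection off the next particle'' is not how the model works---there is no reflection, only rank-dependent drift---and a local argument over $O(\e^{-1/2})$ candidates does not immediately close since the identity of the minimizer can change many times on $[t_k,t_{k+1}]$. The paper handles this cleanly by the global comparison \eqref{eq:rank}--\eqref{eq:rankk}, sandwiching $\Xe_{(0)}$ between the drift-free and full-drift ranked processes $\Xl_{(0)},\Xr_{(0)}$ and summing exponential moments of excursions over \emph{all} particles, weighted geometrically by the equilibrium gaps; this yields Lemma~\ref{lem:locflu}, namely $\Pro(\sup_{t\in[t_k,t_{k+1}]}|\Xe_{(0)}(t)-\Xe_{(0)}(t_k)|\geq\e^b/2)\leq C\exp(-\e^{b-1/2}/2)$, which is super-polynomially small and makes the union bound over $\e^{-1}$ intervals trivial.
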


\begin{remark}
Proposition~\ref{prop:Xotig} implies,
for any $ T\in\bbR_+ $ and $ b\in(0,1/4) $,
we have \\
$ \Pro( \sup_{t\in[0,T]} |\Xe_{(0)}(t)| \leq \e^{b} ) \to 1 $.
This is almost optimal,
since we know a-posteriori from Theorem~\ref{thm:main} that
$ \e^{-1/4}\Xe_{(0)}(t) = \tagp_t(x) $ converges weakly.
\end{remark}

Turning to the proof of Proposition~\ref{prop:fluConv},
for each $t, \delta,\eta>0 $, $ x\in\bbR_+ $, we apply
Proposition~\ref{prop:Ito} for 
$\psi_{s}(y):=\Psi_{t+\delta-s}(y,x+\eta) \in \Qsp_t$.
With $ \psi_s(y) $ solving the backward heat equation
$(\partial_s+2^{-1}\partial_{yy})\psi_s=0$, 
one easily obtains that
\begin{align*}
	\flu_t(x+\eta) = 
	\icQ_{t}(x) + \mg_{t}(x) + \ato_t(x),
\end{align*}
where $ \bfe := (\varepsilon,\delta,\eta) $,
\begin{align}
	&
	\label{eq:PsipNbfe}
	\Psibfe_t(y,x) := \Psi_{t+\delta}(y,x+\eta),
	\quad \pNbfe_t(y,x) := \pN_{t+\delta}(y,x+\eta),
\\
	&
	\label{eq:icQ}
	\icQ_t(x)
	:= \angleBK{ \Qe_0, \Psibfe_{t}(\Cdot,x) },
\\
	&
	\label{eq:mg}
	\mg_{t}(x) 
	:= \Me_{0,t}(\Psibfe_{\Cdot}(\Cdot,x),\infty)
	= 
	\e^{1/2} \sum_{i=0}^\infty 
	\int_{0}^{t} \pNbfe_{t-s}(\Xe_{i}(s),x) d\Be_i(s),
\\
	&
	\label{eq:ato}
	\ato_t(x)
	:=
	\int_0^t \angleBK{ \Ae_s, \partial_s\Psibfe_{t-s}(\Cdot,x) } ds.
\end{align}
Since $ \icQ_{t}(x) $ and $ \mg_t(x) $,
consisting respectively of the contribution of 
$ \{\Xe_i(0)\} $ and $ \{\Be_i(\Cdot)\} $, are independent,
Proposition~\ref{prop:fluConv} is an immediate consequence of:
\begin{proposition}\label{prop:fluConvv}
\begin{enumerate}[label=(\alph*)]
\item[]
\item	
	\label{enu:conato} 
	Fix any $ b\in(0,1/4) $.
	As $ (\e,\delta)\to (0,0) $, $ \atoo_\Cdot(\Cdot) \Rightarrow 0$.
\item
	\label{enu:conic} 
	As $ \bfe\to\bfz $, 
	$ \icQ_{\Cdot}(\Cdot) \Rightarrow \icc_{\Cdot}(\Cdot) $,
	where $ \icc_\Cdot(\Cdot) $ is a centered Gaussian process with
	\begin{align}\label{eq:icCov}
		\Ex \BK{ \icc_t(x) \icc_{t'}(x') }
		= 2 \int_0^\infty \Psi_t(y,x) \Psi_{t'}(y,x') dy.
	\end{align}
\item
	\label{enu:conmg} 
	As $ \bfe\to\bfz $, 
	$ \mg_{\Cdot}(\Cdot) \Rightarrow \mgg_\Cdot(\Cdot) $,
	where $ \mgg_\Cdot(\Cdot) $ is a centered Gaussian process with
	\begin{align}\label{eq:mgCov}
		\Ex \BK{ \mgg_t(x) \mgg_{t'}(x') }
		= 2 \int_0^{t\wedge t'} \int_0^\infty \pN_{t-s}(y,x) \pN_{t'-s}(y,x') dy ds.
	\end{align} 
\end{enumerate}
\end{proposition}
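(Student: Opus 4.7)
The three claims of Proposition~\ref{prop:fluConvv} are essentially independent and will be addressed in turn.

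For \ref{enu:conato}, the key is the Neumann cancellation combined with Proposition~\ref{prop:Xotig}. Using $\partial_y \Psi_\tau = -\pN_\tau$ and $\partial_\tau \Psi_\tau = \tfrac12 \partial_{yy} \Psi_\tau$, the definitions \eqref{eq:A} and \eqref{eq:ato} combine into
\begin{align*}
	\atoo_t(x) = -\e^{-1/4} \int_0^t \sqBK{ \pNbfe_{t-s}(\Xe_{(0)}(s),x) - \pNbfe_{t-s}(0,x) } ds.
\end{align*}
Since $\pN_\tau(y,x) = p_\tau(y-x) + p_\tau(y+x)$ is even in $y$, $\partial_y \pN_\tau(y,x)|_{y=0} = 0$, so Taylor expansion around $y = 0$ gives $|\pNbfe_{t-s}(\Xe_{(0)}(s),x) - \pNbfe_{t-s}(0,x)| \leq C \Xe_{(0)}(s)^2 (t-s+\delta)^{-3/2}$ uniformly in $x$. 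On the high-probability event $\{\sup_{s \leq T} |\Xe_{(0)}(s)| \leq \e^{b'}\}$ supplied by Proposition~\ref{prop:Xotig} at some $b' \in (1/8, 1/4)$ (chosen independently of the shift parameter $b$ appearing in $\atoo$), this gives $|\atoo_t(x)| \leq C \e^{2b'-1/4} \delta^{-1/2}$, which vanishes in the joint limit $(\e, \delta) \to (0, 0)$ as soon as $\delta$ does not decay faster than $\e^{4b' - 1/2}$.

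For \ref{enu:conic}, we recognize $\icQ_t(x) = \angleBK{ \Qe_0, \Psibfe_t(\Cdot, x) }$ as the centered compensated linear statistic of the initial $\PPPp$. By the PPP Laplace formula, for any finite collection $(t_j, x_j, \alpha_j)_j$,
\begin{align*}
	\Ex \exp\BK{ i \sum_j \alpha_j \icQ_{t_j}(x_j) }
	= \exp\BK{ 2\e^{-1/2} \int_0^\infty \BK{ e^{i\e^{1/4} G(y)} - 1 - i\e^{1/4} G(y) } dy },
\end{align*}
where $G(y) := \sum_j \alpha_j \Psibfe_{t_j}(y, x_j)$. Taylor-expanding the integrand to order two in $\e^{1/4}$ and letting $\bfe \to \bfz$ produces the Gaussian exponent $-\sum_{j,k} \alpha_j \alpha_k \int_0^\infty \Psi_{t_j}(y,x_j) \Psi_{t_k}(y,x_k) dy$, which by Cramer--Wold matches \eqref{eq:icCov}. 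Tightness in $C(\bbR_+^2, \bbR)$ would follow from Kolmogorov's criterion, estimating $\Ex|\icQ_t(x) - \icQ_{t'}(x')|^{2p}$ through Poisson cumulant bounds in terms of $\int_0^\infty |\Psibfe_t(y,x) - \Psibfe_{t'}(y,x')|^2 dy$, which has sufficient H\"older regularity in $(t, x)$ by standard heat-kernel estimates.

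For \ref{enu:conmg}, we view $\mg_t(x)$ as a family of stochastic integrals against the independent Brownian motions $\{\Be_i\}$, so that its cross-covariation takes the form
\begin{align*}
	\angleBK{ \mg(x), \mg(x') }_t
	= c\,\e \sum_i \int_0^{t \wedge t'} \pNbfe_{t-s}(\Xe_i(s), x)\, \pNbfe_{t'-s}(\Xe_i(s), x')\, ds
\end{align*}
for a normalization constant $c$ fixed by the convention for $\Be_i$. Using the equilibrium identity $\{\Xe_i(s)\} \stackrel{d}{=} \Xe_{(0)}(s) + \PPPp$ and the negligibility of $\Xe_{(0)}$, taking expectation yields the target $2 \int_0^{t \wedge t'} \int_0^\infty \pN_{t-s}(y,x) \pN_{t'-s}(y,x') dy ds + o(1)$. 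Upgrading this to convergence in probability requires a variance bound on the cross-covariation, which we would derive from the second Palm formula for PPP. Conditional on $\{\Xe_i(\Cdot)\}$, the integrals $\mg_{t_j}(x_j)$ are jointly Gaussian with covariance equal to the cross-covariations; concentration of the latter around its deterministic limit thereby delivers finite-dimensional Gaussian convergence to $\mgg$ (alternatively via Rebolledo's theorem). Tightness in $C(\bbR_+^2, \bbR)$ then follows from BDG applied to $\mg_t(x) - \mg_{t'}(x')$ together with analogous second-moment bounds.

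The main obstacle we anticipate is the concentration of $\angleBK{\mg(x), \mg(x')}_t$ around its deterministic limit required in \ref{enu:conmg}: the singular behavior of $\pNbfe_{t-s}$ as $s \uparrow t$ and $\delta \to 0$, together with the second-order PPP correlations entering the variance estimate, make this the most demanding step; it relies crucially on the stationarity of $\{\Xe_{(i)}(\Cdot) - \Xe_{(0)}(\Cdot)\}$ highlighted in the introduction.
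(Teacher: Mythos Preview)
Your treatment of \ref{enu:conic} via the Poisson Laplace functional is a legitimate alternative to the paper's route, which instead writes $\icQ=\ic-2\rd$ via the gap decomposition (Lemma~\ref{lem:Vdec}), kills the remainder $\rd$ directly, and proves convergence of $\ic$ through a discrete-time martingale CLT (Lemma~\ref{lem:mgCLT}) in the filtration of the ordered gaps. Your approach is more direct for finite-dimensional convergence; the paper's martingale structure, on the other hand, makes the tightness estimates (Lemma~\ref{lem:ictig}) fall out of BDG without needing higher Poisson cumulants. Both work.

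There is a genuine gap in \ref{enu:conato}. Your bound $|\atoo_t(x)|\le C\e^{2b'-1/4}\delta^{-1/2}$ does \emph{not} vanish for the joint limit actually needed: in the application one takes $\delta=\e^{a}$ with $a>1/2$, and then $\e^{2b'-1/4-a/2}$ diverges for every $b'<1/4$. The problem is that after your Taylor expansion the $s$-integrand behaves like $(t-s+\delta)^{-3/2}$, which is not integrable uniformly in $\delta$. The paper avoids this entirely by reversing the order of integration: on $\{\sup_s|\Xe_{(0)}(s)|\le\e^{b'}\}$ one bounds $|\atoo_t(x)|$ by $2\e^{-1/4}\int_{-\e^{b'}}^{\e^{b'}}\int|\partial_s\Psi_{t+\delta-s}(y,x+\e^b)|\,ds\,dy$, then uses that $\partial_s\Psi_s(y,\cdot)$ has a definite sign so the $s$-integral telescopes to $|\Psi_{T+1+\delta}(y,x+\e^b)-\Psi_0(y,x+\e^b)|$, which is $\le C|y|$ since it vanishes at $y=0$. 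The resulting $y$-integral gives $C\e^{2b'}$, hence $|\atoo_t(x)|\le C\e^{2b'-1/4}$ with no $\delta$-dependence at all.

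In \ref{enu:conmg}, your sentence ``Conditional on $\{\Xe_i(\Cdot)\}$, the integrals $\mg_{t_j}(x_j)$ are jointly Gaussian'' is false: the Brownian motions $\Be_i$ \emph{drive} the particles $\Xe_i$, so conditioning on the paths does not leave independent Gaussian increments to integrate against. Your parenthetical alternative via a martingale CLT (Rebolledo) is the correct route and is essentially what the paper does: it localizes $\mg$ by the stopping time $\taue_{1/8}$, and then shows directly that the characteristic function converges by proving the (random) quadratic covariation $\calV^{\bfe}_{t,t'}(x,x')$ converges in probability to the deterministic $\calV_{t,t'}(x,x')$. The paper does this not via Palm calculus but by comparing $\Xe_{(i)}(s)$ to its mean $\xe_i=\e^{1/2}i/2$ and controlling the discrepancy through the stationary-gap bound $\Vert\calDe(0,i,s)\Vert_n\le C(n)i^{1/2}$; you should expect the near-diagonal singularity of $\pNbfe_{t-s}$ to require some care here (cf.\ the exponents in \eqref{eq:mgCon:F1}--\eqref{eq:mgCon:F2}).
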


\begin{remark}\label{rmk:ato}
Our special choice of $ \psi_s(y) $ is what makes
Proposition~\ref{prop:fluConvv}\ref{enu:conato} valid.
To see this,
note that $ \Xe_{(0)}(t) = O(\e^b) $ for all $ b\in(0,1/4) $
(by Proposition~\ref{prop:Xotig})
and that $ \ato_t(x) = \int_0^t \anglebk{\Ae_s,\xi_s} ds $
for $ \xi_s(y) = \partial_s\Psi_{t+\delta-s}(y,x) $.
With $ \xi_s(0)=0 $,
by \eqref{eq:A} we can approximate $ \anglebk{\Ae_s,\xi_s } $ by 
$ \e^{-1/4} O((\Xe_{(0)}(s))^2)$, which indeed tends to zero.
Further,
we expect Proposition~\ref{prop:fluConvv}\ref{enu:conic}
and \ref{enu:conmg} to hold
by comparing \eqref{eq:icc} with \eqref{eq:icQ}, 
and \eqref{eq:mgg} with \eqref{eq:mg}, 
since $ \Qe_0 $ approximates $ 2dB_0(\Cdot) $, 
and $ \e^{1/2}\EM_t $ approximates $ 2\ind_{\bbR_+}(x)dx $, respectively.
\end{remark}

For the proof of Proposition~\ref{prop:fluxi},
we require the following notations:
\begin{align}
	&
	\label{eq:flug}
	\flug_t(x) := \anglebk{ \Qe_t, \ind_{(-\infty,x]} }
	= \e^{1/4} \angleBK{\EM_t,\ind_{(-\infty,x]}} - 2\e^{-1/4}x,
\\
	&
	\label{eq:I}
	\I_t(x) := \inf\curBK{i\in\N: \Xe_{(i)}(t) > x } = \angleBK{ \EM_t, \ind_{(-\infty,x]} },
\\
	&
	\label{eq:zetae}
	\tagpp_t(x) := 
	\e^{1/4} \BK{ \I_0(x) - 2 X_{(\I_0(x))}(\e^{-1}t) }.
\end{align}
Up to a centering and scaling,
$ \flug_t(x) $ counts the total number of particles to the left of $ x $,
and $ \tagpp_t(x) $ records the trajectory of $ X_{(\I_0(x))}(\Cdot) $,
where $ \Xe_{(\I_0(x))}(0) $ the first particle to the right of $ x $ at time $ 0 $.
Proposition~\ref{prop:fluxi} is then an immediate consequence of:

\begin{proposition}\label{prop:fluxii}
Let $ a\in(1/2,\infty) $ and $ b\in(0,1/4) $.
\begin{enumerate}[label=(\alph*)]
\item
	\label{enu:fluflug}
	As $ \e\to 0 $, 
	$ \flux_\Cdot(\Cdot+\e^{b}) - \flug_\Cdot(\Cdot+\e^{b}) \Rightarrow 0$.
\item 
	\label{enu:flugtagpp}
	As $ \e\to 0 $,
	$ \flug_{\Cdot}(\Cdot+\e^{b}) - \tagpp_\Cdot(\Cdot+\e^{b}) \Rightarrow 0$.
\item
	\label{enu:tagpptagp} 
	As $ \e\to 0 $,
	$ \tagpp_{\Cdot}(\Cdot+\e^b) - \tagp_\Cdot(\Cdot) \Rightarrow 0$.
\end{enumerate}
\end{proposition}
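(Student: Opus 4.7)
The plan is to prove the three claims in order, each reducing to a combinatorial identity for ranked particles combined with second-moment estimates that exploit the stationarity $\{Y_i(t)\}_i \sim \bigotimes_i \Exp(2)$ (with $\gamma=1$).

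For part \ref{enu:fluflug}, write the difference as $\angleBK{\Qe_t,\phi_\e}$, where $\phi_\e(y):=\Psi_{\e^a}(y,x+\e^b)-\ind_{(-\infty,x+\e^b]}(y)$. This test function is non-negligible only in an $O(\e^{a/2})$-neighborhood of $y=x+\e^b$ and near $y=0$; the boundary contribution from $\Phi_{\e^a}(y+x+\e^b)$ deviating from $1$ is super-polynomially small because $x+\e^b\geq\e^b\gg\e^{a/2}$, which is where the hypothesis $a>1/2>2b$ enters. A stationarity-based second moment bound gives $\Ex[\angleBK{\Qe_t,\phi_\e}^2]\leq C\|\phi_\e\|_2^2=O(\e^{a/2})$, so the difference tends to zero in probability pointwise in $(t,x)$. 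Tightness in $C(\bbR_+^2,\bbR)$ follows from a Kolmogorov criterion on joint $(t,x)$-increments, which inherit the regularity of $\Psi_{\e^a}$ and suitable moment bounds on $\Qe_t$.

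Part \ref{enu:flugtagpp} is the heart of the matter. Setting $y:=x+\e^b$ and $k:=\I_0(y)$, we expand
\begin{align*}
\flug_t(y)-\tagpp_t(y) = \e^{1/4}[\I_t(y)-k]+2\e^{-1/4}[\Xe_{(k)}(t)-y],
\end{align*}
then sandwich $y$ between consecutive particles at time $t$, $\Xe_{(\I_t(y))}(t)\leq y<\Xe_{(\I_t(y)+1)}(t)$, and write $\Xe_{(k)}(t)-y=[\Xe_{(k)}(t)-\Xe_{(\I_t(y))}(t)]+[\Xe_{(\I_t(y))}(t)-y]$. The overshoot $|\Xe_{(\I_t(y))}(t)-y|$ is bounded by the single gap $\e^{1/2}Y_{\I_t(y)}(t)$, contributing $O(\e^{1/4})$ with sub-exponential tails. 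The bulk term is a signed sum of $|k-\I_t(y)|$ i.i.d.\ $\e^{1/2}\Exp(2)$ gaps, which equals $(k-\I_t(y))\e^{1/2}/2+R$ with $R$ a centered partial-sum bounded by BDG as $|R|\leq C\e^{1/2}\sqrt{|k-\I_t(y)|}$ conditionally on the count. The deterministic mean, scaled by $2\e^{-1/4}$, exactly cancels the $\e^{1/4}(\I_t(y)-k)$ term, leaving $\flug_t(y)-\tagpp_t(y)=2\e^{-1/4}R+O(\e^{1/4})$. The key a priori input is the flux bound $|\I_t(y)-\I_0(y)|=O(\e^{-1/4})$ in probability, uniformly on compacts in $(t,y)$, which I plan to derive by applying Proposition~\ref{prop:Ito} to a smoothed cutoff of $\ind_{(-\infty,y]}$ and exploiting the stationarity of $\{X_{(i)}(t)-X_{(0)}(t)\}$, paralleling Proposition~\ref{prop:Xotig}. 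This yields $2\e^{-1/4}|R|=O(\e^{1/8})$ in probability.

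For part \ref{enu:tagpptagp}, set $k_1:=\I_0(x+\e^b)$ and $k_2:=\ie(x)$, both $\scrF_0$-measurable. The same gap-sum decomposition gives $\Xe_{(k_1)}(t)-\Xe_{(k_2)}(t)=(k_1-k_2)\e^{1/2}/2+R'$ with $|R'|\leq C\e^{1/2}\sqrt{|k_1-k_2|}$; the mean cancels the $\e^{1/4}(k_1-k_2)$ contribution to $\tagpp_t(x+\e^b)-\tagp_t(x)$, leaving $O(\e^{1/4}\sqrt{|k_1-k_2|})$. From the Poisson law of $\I_0(\Cdot)$ at time zero and the definition of $\ie$, we bound $|k_1-k_2|\leq|\I_0(x+\e^b)-2\e^{-1/2}(x+\e^b)|+2\e^{b-1/2}+O(1)$; the Poisson fluctuation is $O(\e^{-1/4})$ in probability and is dominated by $2\e^{b-1/2}$ since $b<1/4$, so the residual is $O(\e^{b/2})\to 0$ in probability. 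Tightness in each part follows from joint moment bounds via Poisson and BDG estimates. The principal obstacle is the intermediate-time flux bound in part (b) which, unlike the purely time-$0$ Poisson analysis sufficient for (c), forces full use of the Ito decomposition and the stationarity of the ranked gaps.
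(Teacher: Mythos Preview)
Your combinatorial skeleton is correct and matches the paper: the cancellation you exploit in (b) is exactly the identity \eqref{eq:flugtagp}, and the Poisson analysis in (c) is the paper's \eqref{eq:tagpptagp} combined with \eqref{eq:flux3:claim:}. Two points, however, separate your outline from a complete proof.

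\textbf{Uniformity is not Kolmogorov tightness in $C$.} The processes $\flug_t(x)$, $\tagpp_t(x)$, $\tagp_t(x)$ are step functions in $x$ (counting functions, resp.\ indexed by $\ie(x)$), so none of the differences in (a)--(c) live in $C(\bbR_+^2,\bbR)$, and a Kolmogorov increment criterion cannot yield $C$-tightness. The paper does not invoke tightness at all here; it proves $\sup_{t\in[0,T],\,x\in[\e^b,L]}|\cdot|\to_{\rm P}0$ directly. The mechanism is: discretize time via $t_k=\e k$; control the oscillation on $[t_k,t_{k+1}]$ by dedicated local-continuity estimates \eqref{eq:locxi}--\eqref{eq:locG} (proved from the comparison \eqref{eq:rank}--\eqref{eq:rankk} and a flux bound at super-polynomial rate); then take a union bound over the $O(\e^{-3/2})$ grid points. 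This is the missing ingredient in your plan.

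\textbf{Flux bound and conditional independence.} Your proposed route to $|\I_t(y)-\I_0(y)|=O(\e^{-1/4})$ via Proposition~\ref{prop:Ito} is plausible but unnecessary: the paper simply notes that $\flug_t(x)-\flug_0(x)=\e^{1/4}(\I_t(x)-\I_0(x))$ and that, by part~\ref{enu:fluflug} just proved and Proposition~\ref{prop:fluConv}, $\flug_\Cdot(\Cdot+\e^b)$ converges weakly, hence $\e^{1/4}(\I_t-\I_0)$ is tight---giving the bound for free. Separately, your conditional BDG bound ``$|R|\leq C\e^{1/2}\sqrt{|k-\I_t(y)|}$ conditionally on the count'' is delicate: $\I_t(y)$ is a functional of the gaps $Y_i(\e^{-1}t)$ themselves, so conditioning on it does not preserve their i.i.d.\ $\Exp(2)$ law. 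The paper decouples this by first proving the uniform bound $\sup_{(j,j',k)\in\scrI_{\mu'}(T,L)}\e^{1/4}|\calDe(j,j',t_k)|\to_{\rm P}0$ over the \emph{deterministic} index set (Lemma~\ref{lem:calD}, via \eqref{eq:calDe:mom} and a union bound), and only afterward inserting the random indices $j=\I_{t_k}(x)$, $j'=\I_0(x)$ once they have been separately confined to $\scrI_{\mu'}$.
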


Letting
\begin{align}
	&
	\label{eq:rho}
	\rho^\e_t(x) := X_{(\I_t(x))}(\e^{-1} t) - \e^{-1/2} x 
	= \e^{-1/2} \sqBK{ \Xe_{(\I_t(x))}(t) - x },
\\	
	&
	\label{eq:calDX}
	\calDe(j,j',t) :=  j - j' - 2\BK{ X_{(j)}(\e^{-1}t) - X_{(j')}(\e^{-1}t) }
\\
	&
	\label{eq:calDY}
	\hphantom{\calDe(j,j',t)}
	=
	\sign(j-j')
	\sum_{i\in[j',j)\cup[j,j')} \BK{ 1 - 2Y_i(\e^{-1}t) },
\end{align}
in Section~\ref{sect:fluxi},
we establish Proposition~\ref{prop:fluxii} relying on the following exact relations
\begin{align}\label{eq:grd}
	&
	\grd_t(x)
	\in (0, Y_{\I_t(x)-1}(\e^{-1}t)),
	\quad
	\text{ for all } x \text{ such that } x\geq \Xe_{(0)}(t),
\\
	&
	\label{eq:flugtagp}
	\flug_t(x) - \tagpp_t(x) 
	= \e^{1/4}\calDe(\I_t(x),\I_0(x),t) + 2 \e^{1/4} \rho^\e_t(x),
\\
	&
	\label{eq:tagpptagp}
	\tagpp_t(x+\e^b) - \tagp_t(x) = \e^{1/4}\calDe(\I_0(x+\e^b),\ie(x),t).	
\end{align}
Indeed, \eqref{eq:grd} holds since $ \rho^\e_t(x) $ represents
the gap between $ \e^{-1/2}x $ and the first particle to its right,
\eqref{eq:flugtagp} follows by combining \eqref{eq:flug}--\eqref{eq:I} and \eqref{eq:rho},
and  \eqref{eq:tagpptagp} follows by combining and \eqref{eq:zetae} and \eqref{eq:xi}.

The starting point of proving Proposition~\ref{prop:fluxii} is as follows.
We establish part~\ref{enu:fluflug} based on using
$ 
	\Psi_\delta(y,x+\e^b) 
	\approx \ind_{(-\infty,-x-\e^b]}(y) + \ind_{(-\infty,x+\e^b]}(y) 
$,
for $ b\in(0,1/4) $ to ensure that
$ \anglebk{ \Qe_t, \ind_{(-\infty,-x-\e^b]}} \approx 0 $.
As for parts~\ref{enu:flugtagpp} and \ref{enu:tagpptagp},
by shifting each $ x $ by $ \e^{b} $,
we use \eqref{eq:grd} to ensure that $ \e^{1/4}\rho^\e_t(x+\e^b)\approx 0 $,
and by using stationarity, we have $ \calDe(j,j',t) = O(|j-j'|^{1/2}) $.
Consequently,
we reduce showing parts~\ref{enu:flugtagpp} and \ref{enu:tagpptagp} to showing
\begin{align*}
	\e^{1/4} \absBK{ \I_t(x) - \I_0(x) }^{1/2} \approx 0,
	\quad
	\e^{1/4} \absBK{ \I_0(x+\e^b) - \ie(x) }^{1/2} \approx 0.
\end{align*}
The former should hold since, by \eqref{eq:flug}--\eqref{eq:I},
we have $ \I_t(x) - \I_0(x) = \e^{-1/4}(\flug_t(x) - \flug_t(x)) = O(\e^{-1/4}) $,
and we expect the latter to be true since
$ \I_0(x+\e^b)\sim\Pois(2\e^{-1/2}(x+\e^{b})) $
and $ \ie(x) = 2\e^{-1/2}x + O(1) = 2\e^{-1/2}(x+\e^b) + O(\e^{-1/2+b}) $.

The rest of this paper is organized as follows.
Section~\ref{sect:ApriEst} is primarily devoted to the proof of Propositions~\ref{prop:Ito} and \ref{prop:Xotig}.
In Sections~\ref{sect:fluConv} and \ref{sect:fluxi},
we prove Propositions~\ref{prop:fluConvv} and \ref{prop:fluxii}, respectively.

\section{A-priori estimates: Proof of Propositions~\ref{prop:Ito} and \ref{prop:Xotig}}
\label{sect:ApriEst}

Let
$	\Xl_i(t) := X_i(0) + \Be_i(t)$,
$	\Xr_i(t) := \Xl_{i}(t) + \e^{-1/2}t$,
$ \Xl_{(i)}(t) $ and $ \Xr_{(i)}(t) $ be the corresponding ranked processes.
We then have from \eqref{eq:CBP} (for $ \gamma=1 $) that, almost surely, 
for all $ i\in\N $ and $ t\geq 0 $,
%
\begin{align}
	\label{eq:rank}
	\Xl_{i}(t) \leq X_{i}(t) \leq \Xr_{i}(t),
\end{align}
from which it easily follows that
\begin{align}
	\label{eq:rankk}
	\Xl_{(i)}(t) \leq X_{(i)}(t) \leq \Xr_{(i)}(t).
\end{align}
%
%
%
%
Based on \eqref{eq:rank}--\eqref{eq:rankk},
we now establish bounds on the mass of the empirical measure on intervals of the form $ (-\infty,x] $.
\begin{lemma}\label{lem:LebBd}
Fix any $ a>0 $, $q\in[1,\infty)$, $t \in\bbR_+$ and $j\in\N$.
There exists $ C=C(a,q,t)<\infty $ such that, for all $ \e\in (0,(aq)^{-2}] $,
\begin{align}
	&
	\label{eq:Leb:unranked}
	\sum_{i=j}^\infty
	\BVert \sup_{s\in[0,t]} \exp\BK{ -a \Xe_{i}(s) } \BVert_q 
	\leq
	C \e^{-1/2} 
	e^{ - j\e^{1/2}a /4 },
\\
	&
	\label{eq:Leb:ranked}
	\BVert
		\sum_{i=j}^\infty
		\sup_{s\in[0,t]}
	 	\exp\BK{ -a \Xe_{(i)}(s) } 
	 \BVert_q
	\leq
	C \e^{-1/2} 
	e^{ - j\e^{1/2}a /4 }.
\end{align}
\end{lemma}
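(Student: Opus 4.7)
The plan is to reduce both bounds to a common single-particle estimate on the drift-free auxiliary process $\Xl_i(s)=\Xe_i(0)+\Be_i(s)$ and then sum. Since $z\mapsto e^{-az}$ is decreasing, \eqref{eq:rank}--\eqref{eq:rankk} yield $\sup_{s\in[0,t]} e^{-a\Xe_i(s)}\leq\sup_{s\in[0,t]} e^{-a\Xl_i(s)}$ and the analogous inequality for the ranked processes. With the natural labeling $X_i(0)=X_{(i)}(0)$, so that $\Xe_i(0)$ is a sum of $i$ i.i.d.\ $\Exp(2\e^{-1/2})$ gaps, \eqref{eq:Leb:unranked} will follow directly from a termwise $L^q$ bound; the hard part will be obtaining \eqref{eq:Leb:ranked} from that same single-particle estimate.

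The first main step is a ranking-elimination identity. Set $N_s(y):=|\{k\in\N:\Xl_k(s)\leq y\}|$ and $M_t(y):=\sup_{s\in[0,t]} N_s(y)$. Using $e^{-az}=a\int_z^\infty e^{-ay}\,dy$ together with the equivalence $\{y\geq\inf_{s\leq t}\Xl_{(i)}(s)\}=\{M_t(y)\geq i+1\}$, I would rewrite
$\sum_{i=j}^\infty\sup_{s\leq t} e^{-a\Xl_{(i)}(s)}=a\int_{-\infty}^\infty e^{-ay}(M_t(y)-j)_{+}\,dy$.
Since each instantaneous sub-level set $\{k:\Xl_k(s)\leq y\}$ is contained in $\{k:\inf_{s\leq t}\Xl_k(s)\leq y\}$, we have $M_t(y)\leq\sum_k\ind_{\{\inf_{s\leq t}\Xl_k(s)\leq y\}}$; the first $j$ indicators contribute at most $j$, so $(M_t(y)-j)_{+}\leq\sum_{k\geq j}\ind_{\{\inf_{s\leq t}\Xl_k(s)\leq y\}}$. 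Reapplying the integral representation termwise then delivers the key estimate
$\sum_{i=j}^\infty\sup_{s\leq t} e^{-a\Xl_{(i)}(s)}\leq\sum_{k=j}^\infty\sup_{s\leq t} e^{-a\Xl_k(s)}$,
which, combined with the triangle inequality for $\Vertbk{\cdot}_q$, reduces \eqref{eq:Leb:ranked} to the same termwise bound as \eqref{eq:Leb:unranked}.

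The second main step is the single-particle estimate. Exploiting independence of $\Xe_k(0)$ and $\Be_k$,
$\Vertbk{\sup_{s\leq t} e^{-a\Xl_k(s)}}_q^q = \Ex[e^{-aq\Xe_k(0)}]\cdot\Ex[\sup_{s\leq t} e^{-aq\Be_k(s)}]$.
The reflection principle gives $-\inf_{s\leq t}\Be_k(s)\stackrel{d}{=}|\Be_k(t)|\sim|N(0,t)|$, so the Brownian factor is bounded by $2e^{(aq)^2 t/2}$. Under the labeling by initial rank, $\Ex[e^{-aq\Xe_k(0)}]=(1+aq\e^{1/2}/2)^{-k}$; for $\e\leq(aq)^{-2}$ (hence $aq\e^{1/2}/2\leq 1/2$) the bound $(1+x)^{-1}\leq e^{-x/2}$ valid on $[0,2]$ yields $\Ex[e^{-aq\Xe_k(0)}]\leq e^{-kaq\e^{1/2}/4}$. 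Combining, $\Vertbk{\sup_{s\leq t} e^{-a\Xl_k(s)}}_q\leq C(a,q,t)\,e^{-ka\e^{1/2}/4}$, and summing the geometric series from $k=j$ to $\infty$ with $(1-e^{-a\e^{1/2}/4})^{-1}\leq C(a)\e^{-1/2}$ establishes both \eqref{eq:Leb:unranked} and \eqref{eq:Leb:ranked}.

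The main obstacle is the first step: the naive $\sum\sup\geq\sup\sum$ points the wrong way for $\sum_{i\geq j}\sup_{s\leq t} e^{-a\Xl_{(i)}(s)}$, and one cannot simply exchange the two operations. The integral representation $e^{-az}=a\int_z^\infty e^{-ay}\,dy$, paired with the monotonicity of the level-set count $M_t(\cdot)$, is precisely what bypasses this obstruction and allows us to absorb the ranking into a termwise sum over the particle labels.
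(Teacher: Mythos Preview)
Your proof is correct and follows the paper's approach closely: both reduce to the drift-free process $\Xl$ via \eqref{eq:rank}--\eqref{eq:rankk}, factor the single-particle $L^q$ bound into an exponential-gap moment $(1+aq\e^{1/2}/2)^{-k}$ times a Brownian-supremum moment controlled by the reflection principle, and sum the resulting geometric series.

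The one point of divergence is the ranking-elimination step for \eqref{eq:Leb:ranked}. The paper introduces the tail subsystem $\Xlp_i:=\Xl_{i+j}$, uses the subset-ranking comparison $\Xlp_{(i)}(s)\leq\Xl_{(i+j)}(s)$, and then writes $\sum_i\Flp_{(i)}=\sum_i\Flp_i$; strictly speaking that last identity is only an inequality (the time-dependent ranking permutation does not commute with $\sup_s$), though the needed direction ``$\leq$'' does hold. Your level-set argument via $e^{-az}=a\int_z^\infty e^{-ay}\,dy$ and $M_t(y)=\sup_{s\leq t}|\{k:\Xl_k(s)\leq y\}|$ is a clean, self-contained way to prove precisely this inequality: it makes explicit why discarding the first $j$ \emph{labels} dominates discarding the first $j$ \emph{ranks}, without invoking any interchange of $\sup$ and ranking. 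Both routes then feed into the identical termwise estimate and geometric tail.
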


\begin{proof}
Fix $ t\in\bbR_+ $, $q\in[1,\infty)$, $ a>0 $ and $j_*\in\N$.
Let $\Xlp_i(s) := \Xl_{i+j_*}(s)$
be the $ i $-th (unranked) particle among $ \{\Xl_{j}\}_{j\geq j_*} $.
Let
$ \Fe_i := \sup_{s\in[0,t]} \exp(-a \Xe_i(s))  $,
$ \Fe_{(i)} := \sup_{s\in[0,t]} \exp(-a \Xe_{(i)}(s)) $,
and similarly let $ \Fl_i $, $ \Fl_{(i)} $, $ \Flp_i $ and $ \Flp_{(i)} $
be the corresponding random variables for
$ \Xl_{i} $, $ \Xl_{(i)} $, $ \Xlp_{i} $, $ \Xlp_{(i)} $, respectively.

By \eqref{eq:rank}, $ \Fe_i \leq \Fl_i $, hence
$	
	\sum_{i=j}^\infty \Vertbk{ \Fe_i }_q
	\leq
	\sum_{i=j}^\infty \Vertbk{ \Fl_i }_q.
$
Let $ r := 2^{-1}aq\e^{1/2} $ and $ \barB^\e_i(t) := \sup_{s\in[0,t]} |\Be_i(s)| $.
With $\Xl_i(t) $ defined as in the preceding,
we have
\begin{align}\label{eq:expmom}
	\Ex \BK{\Fl_i}^q 
	\leq
	\BK{ \Ex e^{-2rY_0(0)} }^{i} 
	\Ex\BK{ e^{aq\barB^\e_i(t)} }
	=
	\BK{ 1+r }^{-i} 
	\Ex\BK{ e^{aq\barB^\e_i(t)} }.
\end{align}
Further, by the reflection principle, 
$\Ex[\exp(-aq\barB^\e_i(t))] \leq 2\Ex[\exp(aq\Be_i(t))] = C(a,q,t)$.
Consequently,
\begin{align*}
	\sum_{i=j}^\infty \VertBK{ \Fl_i }_q 
	\leq
	\frac{ (1+r)^{-(j-1)/q} }{ (1+r)^{1/q}-1 } C.
\end{align*}
With $ r \in (0,1] $,
further using the elementary inequalities
$ (1+r)^{1/q} \geq 1+r/q $ and $(1+r)^{-j/q} \leq \exp(-jr/(2q))$,  
we conclude \eqref{eq:Leb:unranked}.

We next show \eqref{eq:Leb:ranked}.
Since, by definition,
$ \Xlp_{(i)}(s) $ is the $ i $-th smallest particles
among $ \{\Xl_{(j)}(s)\}_{j\geq j_* } $,
we have that $ \Xlp_{(i)}(s) \leq \Xl_{(i+j_*)}(s) \leq \Xe_{(i+j_*)}(s) $
and, therefore, $ \Fe_{(i+j_*)} \leq \Flp_{(i)} $.
Summing both sides over $i$,  we further obtain
$
	\sum_{i=0}^\infty \Fe_{(i+j_*)} 
	\leq 
	\sum_{i=0}^\infty \Flp_{(i)}
	=
	\sum_{i=0}^\infty \Flp_{i}
	=
	\sum_{i=j_*}^\infty \Fl_{i}.
$
From this and \eqref{eq:Leb:unranked} we conclude \eqref{eq:Leb:ranked}.
\end{proof}

Based on \eqref{eq:rank}, 
we now establish the continuity of the process $ \Xe_{(i)}(\Cdot) $.
\begin{lemma}\label{lem:locflu}
There exists $ C<\infty $ such that for any $[t_1, t_2] \subset[0,\infty)$, 
$ j\in\N $ and $ \e\in(0,1] $,
\begin{align}\label{eq:locflu}
	\Pro 
	\BK{ \sup_{t\in[t_1,t_2]} \absBK{ \Xe_{(j)}(t) - \Xe_{(j)}(t_1) } \geq \alpha } 
	\leq  
	C \exp\BK{ -\alpha \e^{-1/2} + 2\e^{-1}(t_2-t_1) }.
\end{align}
\end{lemma}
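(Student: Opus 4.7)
The plan is to apply the strong Markov property at $s_1:=\e^{-1}t_1$, reducing the claim to an estimate for the unscaled Atlas model starting from its invariant distribution, and then to invoke the restarted sandwich from \eqref{eq:rank} together with the $\Exp(2)$ equilibrium law of the gaps in order to obtain a bound uniform in the rank $j$. With $\tau:=\e^{-1}(t_2-t_1)$, $\beta:=\alpha\e^{-1/2}$, and $\tilde B_i(u):=B_i(s_1+u)-B_i(s_1)$ (i.i.d.\ standard Brownian motions independent of $\mathcal{F}_{s_1}$), the claim reduces to
\begin{align*}
	\Pro\BK{\sup_{u\in[0,\tau]}\absBK{X_{(j)}(s_1+u)-X_{(j)}(s_1)}\geq\beta} \leq C e^{-\beta + 2\tau},
\end{align*}
with the restarted sandwich (for $\gamma=1$) reading $X_i(s_1+u)\in[X_i(s_1)+\tilde B_i(u),\, X_i(s_1)+\tilde B_i(u)+u]$.

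For the upper tail, the event $\{X_{(j)}(s_1+u)>X_{(j)}(s_1)+\beta\}$ permits at most $j$ particles in $(-\infty,X_{(j)}(s_1)+\beta]$ at time $s_1+u$, whereas at $s_1$ the $j+1$ particles $\pi(0),\ldots,\pi(j)$ of ranks $0$ through $j$ all sit there; hence at least one $\pi(k)$, $k\leq j$, must have exited upward. Combining the sandwich upper bound with $X_{(j)}(s_1)-X_{\pi(k)}(s_1)=\sum_{i=k}^{j-1}Y_i(s_1)$ forces $\sup_{u\leq\tau}[\tilde B_{\pi(k)}(u)+u]>\sum_{i=k}^{j-1}Y_i(s_1)+\beta$. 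The exponential Markov inequality at $\lambda=1$ and the reflection principle give $\Pro(\sup_{u\leq\tau}[\tilde B(u)+u]>\gamma)\leq 2 e^{-\gamma+3\tau/2}$. Since $\tilde B_{\pi(k)}$ is independent of $\mathcal{F}_{s_1}$ while the equilibrium supplies $Y_i(s_1)\sim\Exp(2)$ i.i.d.\ (so $\Ex[e^{-Y_i(s_1)}]=2/3$), a union bound over $k\leq j$ produces
\begin{align*}
	\Pro(\text{upper tail}) \leq 2 e^{-\beta+3\tau/2}\sum_{k=0}^{j}(2/3)^{j-k} \leq 6\,e^{-\beta+3\tau/2},
\end{align*}
uniformly in $j$. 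The lower tail is treated symmetrically: it requires some initially rank-$k\geq j$ particle to descend into $(-\infty,X_{(j)}(s_1)-\beta]$, and the sandwich lower bound then forces $\sup_{u\leq\tau}[-\tilde B_{\pi(k)}(u)]>\sum_{i=j}^{k-1}Y_i(s_1)+\beta$; the analogous geometric sum over $k\geq j$ yields $6 e^{-\beta+\tau/2}$. Combining gives the lemma with $C=12$ (noting $3/2\leq 2$).

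The main obstacle is obtaining the bound uniformly in the rank $j$: a naive union bound over the $\Theta(j)$ candidate particles of initial rank $\leq j$ would inject a factor of $j+1$ that destroys the estimate. The resolution is that each candidate particle of rank $k$ must additionally travel the distance $\bigl|\sum_{i\in[\min(j,k),\max(j,k))}Y_i(s_1)\bigr|$ beyond $\beta$, and the exponential MGF of the stationary $\Exp(2)$ gaps supplies the summable geometric weights $(2/3)^{|j-k|}$ that cancel this $j$-dependence.
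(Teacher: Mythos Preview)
Your proof is correct and follows essentially the same approach as the paper: both use stationarity of the gaps to restart at $t_1$, invoke the sandwich \eqref{eq:rank}, and exploit the $\Exp(2)$ equilibrium law of the gaps to produce the geometric weights $(2/3)^{|j-k|}$ that render the bound uniform in $j$. The only cosmetic difference is that the paper bounds the exponential moment $\Ex\bigl[\exp\bigl(\e^{-1/2}\sup_t|\Xe_{(j)}(t)-\Xe_{(j)}(t_1)|\bigr)\bigr]$ directly (via $e^{\max_k a_k}\le\sum_k e^{a_k}$ applied to the ranked sandwich processes) and then applies Markov, whereas you apply the exponential Markov inequality particle-by-particle before taking the union bound.
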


\begin{proof}
It clearly suffices to show that
\begin{align}\label{eq:locflu:}
	\Ex 
	\sqBK{ \exp \BK{ \e^{-1/2} \sup_{t\in[t_1,t_2]} \absBK{ \Xe_{(j)}(t) - \Xe_{(j)}(t_1) } } }
	\leq  
	C \exp\BK{ 2\e^{-1}(t_2-t_1)},
\end{align}
Since $(Y_i(\Cdot))_{i\in\N}$ is at equilibrium,
we have
\begin{align*}
	\big( \Xe_{(i)}(\Cdot+t_{1})-\Xe_{(i)}(t_{1}) \big)_{i\in\N}
	\stackrel{\text{distr.}}{=}
	\big( \Xe_{(i)}(\Cdot)-\Xe_{(i)}(0) \big)_{i\in\N},
\end{align*}
so without lost of generality we assume that $t_1=0$.
Let
\begin{align}
	&
	\label{eq:Ur}
	\Ur(t,i,j) :=
	\sup_{s\in[0,t]}
	\curBK{ \exp\sqBK{ \e^{-1/2} \BK{\Xr_{i}(s) - \Xr_{(j)}(0)) } } },
\\
	&
	\label{eq:Ul}
	\Ul(t,i,j) 
	:=	
	\sup_{s\in[0,t]}
	\curBK{ \exp\sqBK{ -\e^{-1/2} \BK{ \Xl_{i}(t) - \Xl_{(j)}(0)) } } }.
\end{align}
Similar to \eqref{eq:expmom}, we have
\begin{align}
	&
	\label{eq:UrBd}
	\Ex\BK{ \Ur(t,i,j) }
	\leq
	\BK{ \Ex(e^{ -Y_0(0) }) }^{j-i} 
	\Ex\BK{ e^{\e^{-1/2}\barB^\e_i(t)+\e^{-1}t} }
	\leq
	(2/3)^{j-i} C e^{2\e^{-1} t},
	\
	\forall i \leq j,
\\
	&
	\label{eq:UlBd}
	\Ex\BK{ \Ul(t,i,j) }
	\leq
	\BK{ \Ex(e^{ -Y_0(0) }) }^{i-j} 
	\Ex\BK{ e^{\e^{-1/2}\barB^\e_i(t)} }
	\leq
	(2/3)^{i-j} C e^{\e^{-1} t},
	\
	\forall i \geq j.	
\end{align}
By \eqref{eq:rank},
$
	\exp[\e^{-1/2} |\Xe_{(j)}(t) - \Xe_{(j)}(0)|]
	\leq
	\exp[ \e^{-1/2} (\Xr_{(j)}(t) - \Xr_{(j)}(0) ) ]
	+ \exp[ -\e^{-1/2} (\Xl_{(j)}(t) - \Xl_{(j)}(0) ) ].
$
For all $ t\in[0,t_2] $, the last two terms are bounded by 
$ \sum_{i\leq j} \Ur(t_2,i,j) $ and $ \sum_{i\geq j}\Ul(t_2,i,j) $,
respectively.
Combining this with \eqref{eq:UrBd}--\eqref{eq:UlBd},
we conclude \eqref{eq:locflu:}.
\end{proof}

Based on Lemma~\ref{lem:LebBd},
we now establish the following a-priori estimate of the empirical measure.

\begin{lemma}\label{lem:Iebd}
Fix $ T\in\bbR_+ $, $q\in [1,\infty)$ and $ a\in(0,\infty) $.
Let $\Je_j:=[\e^{-1/2}j,\e^{-1/2}(j+1))\cap\bbZ$
and $f_i$, $ i\in\N $,  be $\bbR_+$-valued random variables.
There exits $ C=C(T,q,a)<\infty $ such that for 
all $ t\in[0,T] $ and $ \e\in(0,(aq)^{-2}] $,
\begin{align}\label{eq:Iebd}
	\BVert \sum_{i=0}^\infty f_i e^{-a\Xe_{(i)}(t)} \BVert_q
	\leq
	C \e^{-1/4}
	\sum_{j=0}^\infty e^{-ja/4} 
	\Big( \sum_{i\in\Je_j} \VertBK{f_i}^2_{2q} \Big)^{1/2}.
\end{align}
\end{lemma}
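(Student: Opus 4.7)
The plan is to reduce the estimate to Lemma~\ref{lem:LebBd} by the block decomposition $\N=\bigsqcup_{j\ge 0}\Je_j$, which is natural since each block has cardinality of order $\e^{-1/2}$, matching the density of particles at equilibrium. First, by the triangle (Minkowski) inequality, it suffices to show that, for each $j\in\N$,
\begin{align*}
	\BVert \sum_{i\in\Je_j} f_i e^{-a\Xe_{(i)}(t)} \BVert_q
	\le C\e^{-1/4} e^{-ja/4}\Big(\sum_{i\in\Je_j}\VertBK{f_i}_{2q}^2\Big)^{1/2}.
\end{align*}

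To extract this block-wise bound, I would apply Cauchy--Schwarz in the counting measure on $\Je_j$, writing each summand as $f_i\cdot e^{-a\Xe_{(i)}(t)}$ and bounding the inner sum by $\bigl(\sum_{i\in\Je_j}f_i^2\bigr)^{1/2}\bigl(\sum_{i\in\Je_j}e^{-2a\Xe_{(i)}(t)}\bigr)^{1/2}$. Then I would apply H\"older's inequality in probability with conjugate exponents $(2,2)$ (so that the product has $L^q$ norm), which turns the two square roots into the geometric mean $\Vert\sum f_i^2\Vert_q^{1/2}\cdot\Vert\sum e^{-2a\Xe_{(i)}(t)}\Vert_q^{1/2}$. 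The first factor is handled by another Minkowski step: $\Vert\sum_{i\in\Je_j}f_i^2\Vert_q\le\sum_{i\in\Je_j}\Vert f_i\Vert_{2q}^2$.

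For the second factor I would invoke Lemma~\ref{lem:LebBd} with constant $2a$ in place of $a$. Since every $i\in\Je_j$ satisfies $i\ge\e^{-1/2}j$, dropping the earlier indices only enlarges the sum, giving $\Vert\sum_{i\in\Je_j}e^{-2a\Xe_{(i)}(t)}\Vert_q\le C\e^{-1/2}e^{-(\e^{-1/2}j)\e^{1/2}(2a)/4}=C\e^{-1/2}e^{-ja/2}$. The half-power of this bound produces precisely the $\e^{-1/4}e^{-ja/4}$ prefactor demanded by \eqref{eq:Iebd}, and summing over $j$ yields the claim. (The smallness condition on $\e$ in Lemma~\ref{lem:LebBd} is preserved at the cost of absorbing a multiplicative constant into $C$, since applying that lemma with parameters $(2a,q)$ requires $\e\le(2aq)^{-2}$, which can be arranged by shrinking $\e$ within the admissible range.)

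The only real content here is the choice of block size $\e^{-1/2}$ and the Cauchy--Schwarz split of $e^{-a\Xe_{(i)}(t)}$ into two copies of $e^{-a\Xe_{(i)}(t)/2}$; the latter distributes the exponential decay rate evenly so that the $a/4$ exponent of Lemma~\ref{lem:LebBd} re-emerges after taking a square root. No serious obstacle arises: everything is H\"older/Minkowski plus the a-priori mass bound already established.
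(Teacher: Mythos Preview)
Your proposal is correct and follows essentially the same route as the paper: block decomposition over $\Je_j$ via Minkowski, pointwise Cauchy--Schwarz in the counting measure, H\"older in probability to split into $\Vert\sum f_i^2\Vert_q^{1/2}\Vert\sum e^{-2a\Xe_{(i)}(t)}\Vert_q^{1/2}$, Minkowski again on the first factor, and Lemma~\ref{lem:LebBd} with $2a$ on the second. Your remark about the $\e$-range is harmless but in fact unnecessary: inspecting the proof of Lemma~\ref{lem:LebBd}, the constraint actually used is $r=2^{-1}(2a)q\e^{1/2}\le 1$, which is exactly $\e\le(aq)^{-2}$.
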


\begin{proof}
For each $j\in\N$, 
by the Cauchy--Schwarz inequality we have
\begin{align*}
	\BVert \sum_{i\in\Je_j} f_i e^{-a\Xe_{(i)}(t)} \BVert_q
	\leq
	\BVert \sum_{i\in\Je_j} e^{-2a\Xe_{(i)}(t)} \BVert^{1/2}_{q}
	\
	\BVert \sum_{i\in\Je_j} (f_i)^2 \BVert^{1/2}_{q}.
\end{align*}
On the r.h.s.,
replacing
$
	\Vertbk{ \sum_{i\in\Je_j} (f_i)^2 }_{q}
$
with
$
	\sum_{i\in\Je_j} \Vertbk{ (f_i)^2 }_{q}
	= \sum_{i\in\Je_j} \Vertbk{ (f_i) }^2_{2q},
$
and
replacing
$ 
	\Vertbk{ \sum_{i\in\Je_j} e^{-2a\Xe_{(i)}(t)} }_{q} 
$
with
$
	\Vertbk{ \sum_{i\geq \e^{-1/2}j} e^{-2a\Xe_{(i)}(t)} }_{q}
$,
which, by \eqref{eq:Leb:ranked},
is bounded by
$
	C \e^{-1/2} \exp( -ja/2),
$
we conclude \eqref{eq:Iebd}.
\end{proof}

Now we establish a decomposition of $ \Bul_t $ into $ \Bulp_t $ and $ \Re_t $ as follows.
As we show latter in \eqref{eq:Rbd}, 
$ \Re_t $ becomes negligible as $ \e\to 0 $, 
so $ \Bul_t \approx \Bulp_t $.

\begin{lemma}\label{lem:Vdec}
Fix $ t\in\bbR_+ $, $ \e\in(0,1] $ and $ \phi\in\Qsp $  
such that $ \frac{d\phi}{dy}\in\Qsp $,
and let
\begin{align}
	&
	\label{eq:W}
	\angleBK{\Bulp_t,\phi} 
	:= 
	\e^{1/4} \sum_{i=0}^\infty 
	\phi\BK{ \Xe_{(i)}(t) } \BK{ 1 - 2 Y_{i}(\e^{-1}t) },
\\ 
	&
	\label{eq:R}
	\angleBK{\Re_t,\phi} 
	:= 
	\e^{-1/4} \sum_{i=0}^\infty 
	\int_{\Xe_{(i)}(t)}^{\Xe_{(i+1)}(t)} \BK{\Xe_{(i+1)}(t) - y}  
	\phi(y) dy.
\end{align}
Then,
\begin{align}\label{eq:Vdec}
	\angleBK{\Bul_t,\phi} =
	\angleBK{\Bulp_t,\phi} - 2\angleBK{\Re_t,\frac{d\phi}{dy}}.
\end{align}
\end{lemma}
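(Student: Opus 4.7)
The proof is a direct algebraic manipulation combined with a term-by-term integration by parts, so there is essentially no conceptual obstacle; the plan is simply to organize the cancellation carefully.

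First I would rewrite $\angleBK{\Bulp_t,\phi}$ in a geometric form. Since $Y_i(\e^{-1}t)= \e^{-1/2}(\Xe_{(i+1)}(t)-\Xe_{(i)}(t))$, the factor $1-2Y_i(\e^{-1}t)$ in \eqref{eq:W} produces
\begin{align*}
\angleBK{\Bulp_t,\phi}
=\e^{1/4}\sum_{i=0}^{\infty}\phi\bigl(\Xe_{(i)}(t)\bigr)
-2\e^{-1/4}\sum_{i=0}^{\infty}\phi\bigl(\Xe_{(i)}(t)\bigr)\bigl(\Xe_{(i+1)}(t)-\Xe_{(i)}(t)\bigr).
\end{align*}
Comparing with \eqref{eq:V} and using the telescoping identity
$\int_{\Xe_{(0)}(t)}^{\infty}\phi(y)\,dy
=\sum_{i=0}^{\infty}\int_{\Xe_{(i)}(t)}^{\Xe_{(i+1)}(t)}\phi(y)\,dy$
(valid because $\phi\in\Qsp$ forces absolute convergence via Lemma~\ref{lem:LebBd}), I would obtain
\begin{align*}
\angleBK{\Bul_t,\phi}-\angleBK{\Bulp_t,\phi}
=2\e^{-1/4}\sum_{i=0}^{\infty}\int_{\Xe_{(i)}(t)}^{\Xe_{(i+1)}(t)}
\Bigl(\phi\bigl(\Xe_{(i)}(t)\bigr)-\phi(y)\Bigr)\,dy.
\end{align*}

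Next, for each $i$ I would write $\phi(\Xe_{(i)}(t))-\phi(y)=-\int_{\Xe_{(i)}(t)}^{y}\frac{d\phi}{dy}(z)\,dz$ and apply Fubini to swap the order of integration in the $(y,z)$-plane over the triangle $\{\Xe_{(i)}(t)\leq z\leq y\leq \Xe_{(i+1)}(t)\}$. This gives
\begin{align*}
\int_{\Xe_{(i)}(t)}^{\Xe_{(i+1)}(t)}\Bigl(\phi\bigl(\Xe_{(i)}(t)\bigr)-\phi(y)\Bigr)\,dy
=-\int_{\Xe_{(i)}(t)}^{\Xe_{(i+1)}(t)}\bigl(\Xe_{(i+1)}(t)-z\bigr)\frac{d\phi}{dy}(z)\,dz.
\end{align*}
Summing over $i$ and recognizing the right-hand side as $-\angleBK{\Re_t,\tfrac{d\phi}{dy}}$ per \eqref{eq:R} yields the claim.

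The only point that requires a remark is that all sums and integrals are absolutely convergent, so that the Fubini step and the telescoping are both legitimate; this follows from the assumptions $\phi,\frac{d\phi}{dy}\in\Qsp$ combined with the ranked-particle mass bound \eqref{eq:Leb:ranked} in Lemma~\ref{lem:LebBd} (applied with any $a>0$). No genuine obstacle arises.
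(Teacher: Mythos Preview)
Your proposal is correct and follows essentially the same route as the paper. The only cosmetic differences are that the paper starts from $\angleBK{\Bul_t,\phi}$ and applies the one-line integration-by-parts identity $\int_{x_1}^{x_2}\phi(y)\,dy=(x_2-x_1)\phi(x_1)+\int_{x_1}^{x_2}(x_2-y)\phi'(y)\,dy$ on each gap interval (which is exactly your Fubini step, rewritten), and that the paper justifies $\Xe_{(k)}(t)\to\infty$ directly by the Law of Large Numbers on the i.i.d.\ $\Exp(2\e^{-1/2})$ gaps rather than deducing it from Lemma~\ref{lem:LebBd}.
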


\begin{proof}
Since the gaps are at equilibrium,
$ \Xe_{(i)}(t) - \Xe_{(0)}(t) $ is the sum of the i.i.d.\ $\Exp(2\e^{-1/2})$ random variables,
so by the Law of Large Numbers we have $\lim_{k\to\infty} \Xe_{(k)}(t)=\infty$, hence
\begin{align*}
	\angleBK{\Bul_t,\phi}
	=
	\e^{1/4} 
	\sum_{i=0}^\infty 
	\BK{ 
		\phi\BK{ \Xe_{(i)}(t) }
		- 2\e^{-1/2}
		\int_{\Xe_{(i)}(t)}^{\Xe_{(i+1)}(t)} \phi(y) dy
		}.
\end{align*}
With
$	
	\int_{x_1}^{x_2} \phi(y) dy 
	= (x_2-x_1)\phi(x_1) + \int_{x_1}^{x_2}(x_2-y)\phi'(y) dy
$,
we obtain the desired decomposition.
\end{proof}

Based on Lemma~\ref{lem:Iebd},
we next establish bounds on $ \anglebk{\Re_t,\phi} $ and $ \anglebk{\Bulp_t,\phi} $.
We note here that, while these bounds fall short of proving
Proposition~\ref{prop:fluConvv},
they suffice for justifying the use of Ito calculus in Proposition~\ref{prop:Ito}.

Hereafter, when the context is clear, we sometimes
use $\phie_i$, $\Ye_i$ and $ \Xe_{(i)} $, respectively, to denote
$ \phi(\Xe_{(i)}(t)) $, $ Y_i(\e^{-1}t) $ and $ \Xe_{(i)}(t) $.

\begin{lemma}\label{lem:RVbd}
Fix $T\in\bbR_+$, $q\in[1,\infty)$ and $\phi\in\Qsp$ such that $ \frac{d\phi}{dy} \in\Qsp $.
There exists $ C=C(T,q)<\infty $ such that for all 
$ t\in[0,T] $ and $\e\in(0,(2q)^{-2}]$,
\begin{align}
	\label{eq:Rbd}
	&
	\VertBK{ \angleBK{\Re_t,\phi} }_q
	\leq
	 C \e^{1/4} \Qnorm{\phi},
\\
	\label{eq:Wbd}
	&
	\VertBK{ \angleBK{\Bulp_t,\phi} }_q
	\leq 
	C \Qnorm{\scriptstyle\frac{d\phi}{dy}}.
\end{align}
\end{lemma}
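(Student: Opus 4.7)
\emph{Plan.}
I would prove \eqref{eq:Rbd} directly and obtain \eqref{eq:Wbd} via discrete summation by parts.

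For \eqref{eq:Rbd}, since $w(y) = e^{-y} \wedge 1$ is non-increasing and satisfies $w(y) \le e^{-y/2}$, the estimate $|\phi(y)| \le \Qnorm{\phi}\, e^{-\Xe_{(i)}(t)/2}$ holds uniformly for $y \in [\Xe_{(i)}(t), \Xe_{(i+1)}(t)]$. Pulling this out of each integral in \eqref{eq:R} and using $\int_{\Xe_{(i)}}^{\Xe_{(i+1)}}(\Xe_{(i+1)} - y)\, dy = \tfrac{1}{2}(\Xe_{(i+1)}-\Xe_{(i)})^2 = \tfrac{\e}{2} Y_i(\e^{-1}t)^2$, I get the pointwise bound
\begin{align*}
	|\anglebk{\Re_t,\phi}| \le \tfrac{1}{2}\, \e^{3/4}\, \Qnorm{\phi} \sum_{i=0}^\infty Y_i(\e^{-1}t)^2\, e^{-\Xe_{(i)}(t)/2}.
\end{align*}
Since $(Y_i(\e^{-1}t))_{i \in \N}$ are i.i.d.\ $\Exp(2)$ at equilibrium, all finite moments of $Y_i^2$ are bounded, and since $|\Je_j| = O(\e^{-1/2})$, applying Lemma~\ref{lem:Iebd} with $f_i = Y_i(\e^{-1}t)^2$ and $a = 1/2$ yields an $L^q$-bound of $C\, \e^{-1/2}$ for the displayed sum, whence \eqref{eq:Rbd} follows.

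For \eqref{eq:Wbd}, the appearance of $\Qnorm{\frac{d\phi}{dy}}$ rather than $\Qnorm{\phi}$ signals cancellations among the mean-zero quantities $\xi_i := 1 - 2\, Y_i(\e^{-1}t)$. I set $N_0 := 0$ and $N_k := \sum_{j<k} \xi_j$, so that $\xi_i = N_{i+1} - N_i$, and apply Abel's identity to the partial sum $\sum_{i=0}^{K} \phi(\Xe_{(i)}(t))\, \xi_i$. The boundary term $\phi(\Xe_{(K)}) N_{K+1}$ tends to zero in $L^q$ as $K \to \infty$, because exponential decay of $\Vertbk{\phi(\Xe_{(K)})}_{2q}$ beats the polynomial growth $\Vertbk{N_{K+1}}_{2q} \le C K^{1/2}$. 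This yields
\begin{align*}
	\sum_{i=0}^\infty \phi(\Xe_{(i)}(t))\, \xi_i = -\sum_{k=1}^\infty N_k \int_{\Xe_{(k-1)}(t)}^{\Xe_{(k)}(t)} \tfrac{d\phi}{dy}(y)\, dy.
\end{align*}
Bounding each integral by $\Qnorm{\frac{d\phi}{dy}}\, w(\Xe_{(k-1)})\,(\Xe_{(k)} - \Xe_{(k-1)}) \le \Qnorm{\frac{d\phi}{dy}}\, \e^{1/2}\, Y_{k-1}(\e^{-1}t)\, e^{-\Xe_{(k-1)}(t)/2}$ and re-indexing $k-1 \mapsto k$ reduces \eqref{eq:Wbd} to showing
\begin{align*}
	\BVert \sum_{k=0}^\infty Y_k(\e^{-1}t)\, |N_{k+1}|\, e^{-\Xe_{(k)}(t)/2} \BVert_q \le C\, \e^{-3/4}.
\end{align*}

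To obtain this, I apply Lemma~\ref{lem:Iebd} with $f_k = Y_k(\e^{-1}t)\, |N_{k+1}|$ and $a = 1/2$. The key moment estimate is $\Vertbk{f_k}_{2q} \le C\, (k+1)^{1/2}$, derived from the decomposition $N_{k+1} = N_k - (2 Y_k - 1)$ together with: (i) the independence $Y_k \perp N_k$ under the equilibrium measure (since $(Y_j(\e^{-1}t))_{j \in \N}$ are i.i.d.), which gives $\Vertbk{Y_k\, N_k}_{2q} = \Vertbk{Y_k}_{2q}\, \Vertbk{N_k}_{2q}$; (ii) Burkholder's inequality yielding $\Vertbk{N_k}_{2q} \le C\, k^{1/2}$ for sums of i.i.d.\ centered $\xi_j$; and (iii) uniform boundedness of all finite moments of $Y_k \sim \Exp(2)$. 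Since $\sum_{k \in \Je_j}(k+1) \le C\, \e^{-1}(j+1)$, this gives $(\sum_{k \in \Je_j} \Vertbk{f_k}^2_{2q})^{1/2} \le C\, \e^{-1/2}(j+1)^{1/2}$, and the convergent series $\sum_{j \ge 0} e^{-j/8}(j+1)^{1/2}$ closes the argument, producing $\Vertbk{\anglebk{\Bulp_t,\phi}}_q \le C\, \Qnorm{\frac{d\phi}{dy}}$.

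The main obstacle is the Abel-summation step, where the dependence on both $k$ and $\e$ in $\Vertbk{Y_k\, N_{k+1}}_{2q}$ must be tracked sharply: the $k^{1/2}$ factor supplied by the discrete martingale $N_k$ must exactly offset the $\e^{-1/2}(j+1)^{1/2}$ arising from summing over $\Je_j$ of size $\e^{-1/2}$, so that the resulting $\e^{-3/4}$ bound combines with the $\e^{1/2}$ gap factor and the $\e^{1/4}$ prefactor in $\Bulp_t$ to give precisely the stated constant bound. A naive triangle inequality discarding the independence $Y_k \perp N_k$ still yields a valid (though weaker) bound; carefully exploiting that independence is what makes the plan go through.
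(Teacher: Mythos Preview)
Your proposal is correct and follows essentially the same approach as the paper: a direct pointwise bound plus Lemma~\ref{lem:Iebd} for \eqref{eq:Rbd}, and Abel summation followed by Lemma~\ref{lem:Iebd} with $f_k = Y_k |N_{k+1}|$ for \eqref{eq:Wbd}. The only cosmetic differences are that the paper uses the sharper weight $w(y)\le e^{-y}$ (i.e.\ $a=1$ in Lemma~\ref{lem:Iebd}) rather than your $e^{-y/2}$, and it obtains $\Vertbk{Y_i Z_i}_{2q}\le C(i+1)^{1/2}$ via H\"older ($\Vertbk{Y_i}_{4q}\Vertbk{Z_i}_{4q}$) rather than via the independence $Y_k\perp N_k$; either device gives the needed $(k+1)^{1/2}$ and the rest is identical, so independence is convenient but not, as your last paragraph suggests, essential.
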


\begin{proof}
Fixing $T\in\bbR_+$, $ t\in[0,T] $, $q\in[1,\infty)$, $ \e\in(0,(2q)^{-2}] $ and $\psi\in\Qsp$,
we let $C=C(T,q)<\infty$.
To show \eqref{eq:Rbd},
in \eqref{eq:R}, we use $\Xe_{(i+1)}-y \leq \e^{1/2} Y_{i}$ and
\begin{align*}
	\sup_{y\in[\Xe_{(i)},\Xe_{(i+1)}]} |\phi(y)| \leq \Qnorm{\phi} \exp(-\Xe_{(i)})
\end{align*}
to obtain
$
	| \anglebk{\Re_t,\phi} |
	\leq 
	\e^{3/4} \Qnorm{\phi}
	\sum_{i=0}^\infty
	( Y_{i} )^2 \exp(-\Xe_{(i)}).
$
Combining this with \eqref{eq:Iebd} for $ f_i = (Y_i)^2 $, we arrive at
\begin{align*}
	\VertBK{ \angleBK{\Re_t,\phi} }_q
\leq 
	C \e^{1/2} \Qnorm{\phi}
	\sum_{j=0}^\infty 
	\exp\BK{-j/4}	
	\BK{ \VertBK{ (Y_i)^2 }^2_{2q} |\Je_j| }^{1/2}.
\end{align*}
Further using $ \Vertbk{ (Y_i)^2 }_{2q} = C$
and $|\Je_j| \leq \e^{-1/2}+1$,
we conclude \eqref{eq:Rbd} upon summing $ j $.

Turning to showing \eqref{eq:Wbd},
we assume without lost of generality $ q\in\N\cap[1,\infty) $.
Letting $Z_k:=\sum_{i=0}^k(1-2Y_{i})$,
with $\phi\in\Qsp$,
using summation by parts in \eqref{eq:W},
we obtain
\begin{align}\label{eq:Wbd:W1}
	\angleBK{\Bul_t,\phi} :=
	\e^{1/4}\sum_{i=0}^\infty (\phie_i-\phie_{i+1}) Z_{i}.
\end{align}
To bound this expression, we combine
\begin{align*}
	|\phie_{i+1}-\phie_{i}| 
	\leq \Qnorm{ \frac{d\phi}{dy} } 
	\int_{\Xe_{(i)}}^{\Xe_{(i+1)}} e^{-y} dy
	\leq 
	\Qnorm{ \frac{d\phi}{dy} } 
	\e^{1/2} \Ye_i \exp(-\Xe_{(i)}),
\end{align*}
(where the second inequality is obtained by using $ e^{y} \leq e^{-X_{(i)}} $)
and \eqref{eq:Iebd} for $ f_i = Y_iZ_i $ to obtain
\begin{align}\label{eq:Wbd:W1bd}
	\VertBK{ \angleBK{\Bulp_t,\phi} }_q
	\leq 
	 C
	\e^{1/2}
	\Qnorm{ \frac{d\phi}{dy} }
	\sum_{j=0}^\infty e^{-j/2}
	\BK{ \sum_{i\in \Je_j} \VertBK{ Z_{i} Y_i }^2_{2q} }^{1/2}.	
\end{align}
With $\Vertbk{Y_i}_{4q}=C$ and $\Vertbk{Z_{i}}_{4q} \leq (i+1)^{1/2} C$,
we have $ \Vertbk{  Y_i Z_{i}  }^2_{2q} \leq (i+1)C $.
Plugging this into \eqref{eq:Wbd:W1bd},
we further obtain
\begin{align*}
	\VertBK{ \angleBK{\Bulp_t,\phi} }_q
	\leq
	 C
	\e^{1/2} \Qnorm{ \frac{d\phi}{dy} }
	\sum_{j=0}^\infty \sqBK{ |\Je_j| \e^{-1/2} (j+1) }^{1/2} e^{-j/4}.
\end{align*}
With $|\Je_j| \leq \e^{-1/2} +1$, 
upon summing over $ j $ we conclude \eqref{eq:Wbd}.
\end{proof}

Based on Lemma~\ref{lem:Iebd}, we now establish a bound on $ \Me_{t_0,t}(\psi,j) $, 
as defined as in \eqref{eq:Mek}.
Hereafter we adopt the convention that $ \Me_{t_0,t}(\psi,-1) : =0 $.

\begin{lemma}\label{lem:Mapri}
Let $ \sigma\in[0,\infty] $ be arbitrary stopping time
(with respect to the underlying sigma algebra).
Fix $T\in\bbR_+$ and $ q\in(1,\infty) $.
There exists $ C = C(T,q)<\infty $ such that, for all $\psi\in\Qsp_T$, $ t_0\in[0,T] $,
$ j,j'\geq -1 $ and $ \e\in(0,1] $,
\begin{align}\label{eq:Mapri}
	\VertBK{ 
		\sup_{t\in[t_0,T]} 
		\absBK{\Me_{t_0,t\wedge\sigma}(\psi,j)-\Me_{t_0,t\wedge\sigma}(\psi,j')} 
		}^2_q
	\leq
	C \QTnorm{\psi}^2  \exp\BK{ -(j\wedge j')\e^{1/2}/2 }.
\end{align}
\end{lemma}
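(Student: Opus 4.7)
The plan is to apply the Burkholder--Davis--Gundy (\ac{BDG}) inequality to the martingale difference $ N_t := \Me_{t_0,t\wedge\sigma}(\psi,j) - \Me_{t_0,t\wedge\sigma}(\psi,j') $, and then control its quadratic variation via Lemma~\ref{lem:LebBd}. Without loss of generality assume $ j\geq j'\geq -1 $. By the independence of the driving Brownian motions $ \{\Be_i\}_{i\in\N} $ and Ito's isometry, $ N_\Cdot $ is a continuous $ L^2 $-martingale on $ [t_0,T] $ with quadratic variation
\begin{align*}
	\angleBK{N}_T = \sum_{i=j'+1}^j \int_{t_0}^{T\wedge\sigma}
	\BK{ \partial_y\psi_s\BK{\Xe_i(s)} }^2 ds.
\end{align*}

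First I would invoke \ac{BDG}: for $ q\in[2,\infty) $, $ \Vertbk{\sup_{t\in[t_0,T]}|N_t|}_q^2 \leq C_q \Vertbk{\angleBK{N}_T}_{q/2} $, while the case $ q\in(1,2) $ is reduced to $ q=2 $ by Jensen's inequality (note the desired right-hand side is monotone in $ q $). Next, combine the pointwise bound $ |\partial_y\psi_s(y)| \leq \QTnorm{\psi} w(y) $ with the observation that $ w(y)^2 = (e^{-y}\wedge 1)^2 \leq e^{-2y} $ for all $ y\in\bbR $, to obtain
\begin{align*}
	\angleBK{N}_T \leq T\, \QTnorm{\psi}^2
	\sum_{i=j'+1}^\infty \sup_{s\in[0,T]} \exp\BK{-2\Xe_i(s)}.
\end{align*}

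Then I would apply Minkowski's inequality (valid since $ q/2\geq 1 $ in the reduced case) and invoke \eqref{eq:Leb:unranked} with $ a = 2 $, which is applicable for $ \e \leq (2\cdot q/2)^{-2} = q^{-2} $ and yields
\begin{align*}
	\BVert \sum_{i\geq j'+1} \sup_{s\in[0,T]} e^{-2\Xe_i(s)} \BVert_{q/2}
	\leq \sum_{i\geq j'+1} \VertBK{\sup_{s\in[0,T]} e^{-2\Xe_i(s)}}_{q/2}
	\leq C(T,q) \e^{-1/2} e^{-(j\wedge j')\e^{1/2}/2}.
\end{align*}
For the complementary range $ \e \in [q^{-2},1] $ (so that $ \e^{-1/2} $ is bounded by a constant depending on $ q $), the same estimate holds trivially with a possibly larger $ C(T,q) $ by the reflection principle applied to the bound $ \Xe_i(s)\geq \Xl_i(s) $ from \eqref{eq:rank}. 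Combining the three displays gives a bound of the stated form, up to a $ \e^{-1/2} $ pre-factor that can be absorbed into the overall constant by tightening the application of \ac{BDG} (e.g.\ replacing the crude bound $ w^2\leq e^{-2y} $ with a use of $ w^2 \leq w $ and interpolating so as to gain a factor of $ \e^{1/2} $ from the density of particles).

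The main obstacle is ensuring that the passage from the geometric-series estimate in Lemma~\ref{lem:LebBd}, which naturally carries a factor of $ \e^{-1/2} $ from the $ \e^{1/2} $-spacing of ranked particles at equilibrium, produces the clean constant $ C=C(T,q) $ claimed in \eqref{eq:Mapri}; this requires either a tighter choice of exponent in the bound $ w \leq e^{-y} $, or absorbing the prefactor by splitting the sum at the scale $ j\sim \e^{-1/2} $ and applying separate estimates in the two regimes. All other ingredients---\ac{BDG}, Minkowski, and the deterministic comparison \eqref{eq:rank}---are routine.
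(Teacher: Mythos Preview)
Your overall strategy is exactly the paper's: apply \ac{BDG} to the martingale difference, bound $(\partial_y\psi_s(y))^2$ by $\QTnorm{\psi}^2 e^{-2y}$, and invoke \eqref{eq:Leb:unranked} with $a=2$. The issue is the $\e^{-1/2}$ discrepancy you correctly flagged---but your proposed fixes (using $w^2\leq w$, interpolating, or splitting the sum at scale $\e^{-1/2}$) do not recover this factor. The problem is structural: there are $\sim\e^{-1/2}$ particles per unit length at equilibrium, so any sum $\sum_i e^{-a\Xe_i}$ genuinely scales like $\e^{-1/2}$, and no choice of exponent or splitting changes that.

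The resolution is that the definition \eqref{eq:Mek} as printed is missing an $\e^{1/4}$ prefactor. This is visible from the It\^o derivation in the proof of Proposition~\ref{prop:Ito}: the martingale part of $\anglebk{\Qe_{k,s},\psi_s}=\e^{1/4}\sum_{i\leq k}\psi_s(\Xe_i(s))+\cdots$ is $\e^{1/4}\sum_{i\leq k}\int\partial_y\psi_s(\Xe_i(s))\,d\Be_i(s)$, and the same prefactor appears explicitly in \eqref{eq:mgloc}. The paper's own proof of this lemma accordingly writes the quadratic variation with an $\e^{1/2}$ already present:
\[
\BVert\sup_{t}|M^{\e,*}_t|\BVert_q^2 \leq C\,\BVert\,\e^{1/2}\!\int_{t_0}^{T\wedge\sigma}\sum_{i=j'+1}^{j}(\partial_y\psi_s(\Xe_i(s)))^2\,ds\,\BVert_{q/2},
\]
after which \eqref{eq:Leb:unranked} gives $C\e^{-1/2}e^{-(j\wedge j')\e^{1/2}/2}$ and the two powers of $\e$ cancel. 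Once you restore the $\e^{1/4}$ in $\Me$, your argument is complete and matches the paper's; the hand-waving in your last paragraph should simply be deleted.
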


\begin{proof}
Fixing such $T$, $q$, $ t_0 $, $ j,j' $, $\e$, $\psi$ and $ \sigma $, we let $C=C(T,q)<\infty$.
We assume without lost of generality $ j>j' $.
Applying Doob's $L^q$-inequality and the \ac{BDG} inequality 
(e.g.\ \cite[Theorem~ \textrm{II}.1.7 and Theorem~ \textrm{IV}.4.1]{revuz99})
to the $ C([t_0,T],\bbR) $-valued 
martingale $M^{\e,*}_{t}:=\Me_{t_0,\Cdot\wedge\sigma}(\psi,j)-\Me_{t_0,\Cdot\wedge\sigma}(\psi,j')$,
we obtain
\begin{align}\label{eq:M:BDG}
\begin{split}
	&
	\BVert 
		\sup_{t\in[t_0,T]} \absBK{M^{\e,*}_{t}} 
	\BVert^2_q
	\leq
	C	
	\VertBK{ 
		\e^{1/2}\int_{t_0}^{T\wedge\sigma} 
		\sum_{i=j'+1}^{j} \BK{\partial_y\psi_s\BK{\Xe_{i}(s)}}^2 ds 
	}_{q/2}
\\
	&
	\quad
	\leq
	C \int_0^T \e^{1/2} 
	\sum_{i=j'+1}^{j} 
	\VertBK{ \BK{ \partial_y\psi_s\BK{\Xe_{i}(s)} }^2 }_{q/2} ds.
\end{split}
\end{align}
In the last expression, 
replacing $ (\partial_y\psi_s(y))^2 $ with $ \QTnorm{\psi}^2 e^{-2y} $
and replacing $ j $ with $ \infty $,
and then applying \eqref{eq:Leb:unranked} for $ a=2 $,
we further obtain the bound $C \QTnorm{\psi}^2 \exp(-j\e^{1/2}/2)$,
thereby concluding \eqref{eq:Mapri}.
\end{proof}

\begin{proof}[Proof of Proposition~\ref{prop:Ito}]
Fix $ \psi\in\Qsp_T $.
The bound \eqref{eq:Mapri} implies that 
$ \{\Me_{t_0,\Cdot}(\psi,j)\}_{j} $ is Cauchy in the complete space
$L^{q}(C([t_0,T],\bbR),\scrB,\Pro)$,
whereby we conclude \eqref{eq:Mcon}.
Further, for all $ q>1 $,
\begin{align}\label{eq:MLqbd}
	\VertBK{ \sup_{t\in[t_0,T]} \absBK{\Me_{t_0,t}(\psi,\infty) } }_q
	\leq
	\lim_{j\to\infty} \VertBK{ \sup_{t\in[t_0,T]} \absBK{\Me_{t_0,t}(\psi,j) } }_q
	\leq
	C(T,q) \QTnorm{\psi},
\end{align}
where the last inequality follows by \eqref{eq:Mapri} for $ j'=-1 $.

To derive \eqref{eq:Ito},
we apply Ito's formula to 
\begin{align*}
	\angleBK{\Qe_{k,s},\psi_s}
:=
	\e^{1/4} \BK{
		\sum_{i=0}^k \psi_t(\Xe_i(s)) - 2 \e^{-1/2} \int_0^\infty \psi_s(y) dy
		}
\end{align*}
to obtain
\begin{align*}
	&
	\left< \Qe_{k,s}, \psi_s \middle> \right|^{s=t}_{s=0}
	=
	\int_0^t 
	\angleBK{ \e^{1/4} \EM_{k,s}, \BK{\partial_s+\frac12 \partial_{yy}} \psi_s } ds
	-
	2 \e^{-1/4} \int_0^t \int_0^\infty \partial_{s} \psi_s(y) dy ds
\\
	&
	\quad
	+ \Me_{0,t}(\psi,k)
	+
	\e^{-1/4} 
	\int_0^t 
	\BK{\partial_y\psi_s} \BK{\Xe_{(0)}(s)}
	\sum_{i=0}^k \ind_\curBK{X_{(i)}(s) = X_{(0)}(s) } ds.
\end{align*}
Clearly, almost surely for all $ s\in[0,T] $,
$ \anglebk{\Qe_{k,s},\phi} \to \anglebk{\Qe_s,\phi} $
and $ \sum_{i=0}^k \ind_\curBK{X_{(i)}(s) = X_{(0)}(s) } \to 1$
as $ k\to\infty $.
As for $ \Me_{0,t}(\psi,k) $, from \eqref{eq:Mcon} (for large enough $ q $)
we deduce that, almost surely for all $ t\in[0,T] $, $ \Me_{0,t}(\psi,k) \to \Me_{0,t}(\psi,\infty) $.
Hence letting $ k\to\infty $ we arrive at
\begin{align}
	\label{eq:Ito:1}
	&
	\left< \Qe_{s}, \psi_s \middle> \right|^{s=t}_{s=0}
	=
	\int_0^t 
	\angleBK{ \e^{1/4} \EM_{s}, \BK{\partial_s+\frac12 \partial_{yy}} \psi_s } ds
	-
	2 \e^{-1/4} \int_0^t \int_0^\infty \partial_{s} \psi_s(y) dy ds
\\
	&
	\label{eq:Ito:2}
	\quad
	+
	\e^{-1/4} 
	\int_0^t 
	\BK{\partial_y\psi_s} \BK{\Xe_{(0)}(s)} ds 	+ \Me_{0,t}(\psi,\infty).
\end{align}
With $ \Ae_t $ and $ \Bul_t $ defined as in \eqref{eq:A}--\eqref{eq:V},
the r.h.s.\ of \eqref{eq:Ito:1} equals
\begin{align}\label{eq:Ito:3}
	\int_0^t \angleBK{ \Bul_t, (\partial_s+2^{-1}\partial_{yy}) \psi_s } ds
	+ \int_0^t \angleBK{\Ae_s,\partial_s\psi_s} ds
	+ \e^{-1/4} \int_0^t \int_{\Xe_{(0)}(s)}^{\infty} \partial_{yy}\psi_s dyds.
\end{align}
The last term in \eqref{eq:Ito:3} cancels the first term in \eqref{eq:Ito:2},
so \eqref{eq:Ito} follows.
\end{proof}

\begin{corollary}
For any $ T\in\bbR_+ $ and $ q\in(1,\infty) $,
there exists $ C=C(T,q)<\infty $ such that
for all $q>1$, $\e\in(0,(2q)^{-2}]$ and $t\in[0,T]$,
\begin{align}\label{eq:Xosech}
	\VertBK{ \int_0^{\Xe_{(0)}(t)} \sech(y) dy }_q
	\leq
	C \e^{1/4}.
\end{align}
\end{corollary}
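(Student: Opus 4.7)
The plan is to apply the Ito formula of Proposition~\ref{prop:Ito} with the time-independent test function $\psi_s(y) := \sech(y)$. Its membership in $\Qsp_T$ is immediate: each derivative of $\sech$ is bounded on $\bbR$ by a constant multiple of $\sech(y) \leq 2 e^{-|y|}$, so $\sech, \sech', \sech'' \in \Qsp$ and $\partial_s \sech = 0$, whence $\QTnorm{\sech} < \infty$.

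With $\partial_s \psi_s \equiv 0$ and $(\partial_s + \tfrac12 \partial_{yy})\psi_s = \tfrac12 \sech''$, equation \eqref{eq:Ito} reduces to
\begin{equation*}
\anglebk{\Qe_t, \sech} - \anglebk{\Qe_0, \sech} = \tfrac{1}{2} \int_0^t \anglebk{\Bul_s, \sech''} ds + \Me_{0,t}(\sech, \infty).
\end{equation*}
Next, I would use the decomposition $\Qe_t = \Ae_t + \Bul_t$ together with $\Ae_0 = 0$ (immediate from $\Xe_{(0)}(0) = 0$) to rewrite the left-hand side as $\anglebk{\Ae_t, \sech} + \anglebk{\Bul_t - \Bul_0, \sech}$. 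By the definition \eqref{eq:A} of $\Ae_t$, $\anglebk{\Ae_t, \sech} = -2\e^{-1/4} \int_0^{\Xe_{(0)}(t)} \sech(y) dy$, so rearranging gives
\begin{equation*}
\int_0^{\Xe_{(0)}(t)} \sech(y) dy = \frac{\e^{1/4}}{2}\BK{ \anglebk{\Bul_t - \Bul_0, \sech} - \tfrac{1}{2} \int_0^t \anglebk{\Bul_s, \sech''} ds - \Me_{0,t}(\sech, \infty) }.
\end{equation*}

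To finish, I would bound each piece inside the bracket in $L^q$ uniformly in $t \in [0,T]$ and $\e \in (0,(2q)^{-2}]$. For $\anglebk{\Bul_s, \phi}$ with $\phi \in \{\sech, \sech''\}$, the decomposition \eqref{eq:Vdec} of Lemma~\ref{lem:Vdec} combined with \eqref{eq:Rbd} and \eqref{eq:Wbd} of Lemma~\ref{lem:RVbd} (noting that both $\phi$ and $\frac{d\phi}{dy}$ lie in $\Qsp$, since all derivatives of $\sech$ decay exponentially) yield $L^q$ bounds of $O(1)$. The time integral is then controlled by $CT$ via Minkowski's integral inequality, and the martingale term is bounded using \eqref{eq:MLqbd} together with $\QTnorm{\sech} < \infty$. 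Multiplying the resulting $O(1)$ bracket by $\e^{1/4}/2$ yields the claimed estimate.

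There is no serious obstacle here: the key conceptual step is the choice of the time-independent $\psi = \sech$, which via $\partial_s \psi = 0$ eliminates the $\Ae$ contribution from the right-hand side of \eqref{eq:Ito} while, via the $\Qe = \Ae + \Bul$ split, still allows $\int_0^{\Xe_{(0)}(t)} \sech(y) dy$ to be extracted from $\anglebk{\Ae_t, \sech}$ with an explicit $\e^{1/4}$ prefactor. The remaining work is a routine application of the a-priori $L^q$ bounds already established in this section.
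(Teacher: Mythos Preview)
Your proposal is correct and follows essentially the same route as the paper: apply Proposition~\ref{prop:Ito} with the time-independent test function $\psi=\sech$, use $\Qe=\Ae+\Bul$ (and $\anglebk{\Ae_0,\sech}=0$ since $\Xe_{(0)}(0)=0$) to isolate $\anglebk{\Ae_t,\sech}=-2\e^{-1/4}\int_0^{\Xe_{(0)}(t)}\sech(y)\,dy$, and then bound the remaining $\Bul$ and martingale terms via \eqref{eq:Vdec}, \eqref{eq:Rbd}--\eqref{eq:Wbd}, and \eqref{eq:MLqbd}. The paper's proof is the same, only with the intermediate algebra written slightly more tersely.
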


\begin{proof}
Applying Proposition~\ref{prop:Ito} for $\psi(y):=\sech(y)\in\Qsp_T$,
we obtain
\begin{align*}
	\left<\Ae_s+\Bul_s,\sech \middle> \right|_{s=0}^{s=t} 
=
	2^{-1} \int_0^t 
	\angleBK{ \Bul_s, {\scriptstyle\frac{d^2~}{dy^2}} \sech } ds
	+ \Me_{0,t}(\sech,\infty),
\end{align*}
or equivalently
\begin{align*}
	\angleBK{ \Ae_t, \sech }
=
	\angleBK{ \Bul_0 - \Bul_t,\sech} +
	2^{-1} \int_0^t 
	\angleBK{ \Bul_s,{\scriptstyle\frac{d^2~}{dy^2}} \sech  } ds
	+ \Me_{0,t}(\sech,\infty).
\end{align*}
Recall from \eqref{eq:Vdec} we have
$ 
	\anglebk{\Bul_s,\phi} = \anglebk{\Bulp_s,\phi} - 2\anglebk{\Re_s,\frac{d\phi}{dy}}.
$ 
As $ \psi\in C^\infty(\bbR) $ and $ \frac{d^k}{dy^k}\sech \in\Qsp $ for all $ k\in\N $,
further applying \eqref{eq:Rbd}--\eqref{eq:Wbd} and \eqref{eq:MLqbd},
we conclude \eqref{eq:Xosech}.
\end{proof}

\begin{proof}[Proof of Proposition~\ref{prop:Xotig}]
Fix $ T\in\bbR_+ $, $ b\in[0,1/4) $ and $ q>1 $.
Applying Chebyshev's inequality in \eqref{eq:Xosech}, 
we obtain that, for all $ t\in[0,T] $, $ q> 1 $  and $\e\in(0,(2q)^{-2}]$,
\begin{align}\label{eq:Xomom}
	\Pro \BK{ \absBK{ \Xe_{(0)}(t) } \geq \lambda }
	\leq
	\e^{q/4} C(T,q) \BK{ \int_0^\lambda \sech(y) dy }^{-q}.
\end{align}
Indeed, letting $ \te_k := \e k $, we have
\begin{align}\label{eq:Xotig:union}
\begin{split}
	&
	\curBK{ \taue_b \leq T } 
\\
	&
	\quad
	\subset
	\bigcup_{k\leq \e^{-1}T} 
	\BK{
	\curBK{ |\Xe_{(0)}(\te_k)| \geq \frac{\e^{b}}{2} }
	\cup
	\curBK{ 
		\sup_{t\in[\te_k,\te_{k+1}]} \absBK{\Xe_{(0)}(t)-\Xe_{(0)}(\te_k) } 
		\geq 
		\frac{\e^{b}}{2}
		}
	}.
\end{split}
\end{align}
From \eqref{eq:Xomom} and \eqref{eq:locflu} we deduce
\begin{align}
	&
	\label{eq:Xotig:1}
	\Pro \BK{ 
		\absBK{ \Xe_{(0)}(\te_k) } \geq \e^{b}/2 
	}
	\leq
	C \e^{(1/4-b)q},
\\
	&
	\label{eq:Xotig:2}
	\Pro \BK{ 
		\sup_{t\in[\te_k,\te_{k+1}]} 
		\absBK{ \Xe_{(0)}(t) - \Xe_{(0)}(\te_k) } \geq \e^{b}/2 
	}
	\leq
	C e^{-\e^{b-1/2}/2 }.
\end{align}
In \eqref{eq:Xotig:union} applying the union bound 
using \eqref{eq:Xotig:1}--\eqref{eq:Xotig:2},
we conclude \eqref{eq:taub}.
\end{proof}

Recall $ \EM_t $ is defined as in \eqref{eq:EM}.
We next derive bounds on 
$
	\Pe_t := \e^{1/2} \EM_t.
$
To this end, we let
\begin{align}
	&
	\label{eq:EMz}
	\anglebk{ \EMz_t, \phi } := 
	\anglebk{ \EM_t, \phi(\Cdot+\Xe_{(0)}(t)) },	
\\
	&
	\label{eq:Kb}
	\Se_b(t) := \ind_\curBK{ \sup_{s\in[0,t]} |\Xe_{(0)}(s)| \leq \e^{b} }.
\end{align}

\begin{lemma}\label{lem:Ppapri}
Fix $s,t\in(0,\infty)$, $ x,y'\in\bbR $, $q\in[1,\infty)$, $ b\in [0,1/4) $.
There exists $ C=C(q)<\infty $ such that, for all $ \e\in(0,1] $,
\begin{align}
	&
	\label{eq:pDapri}
	\VertBK{ \Se_b(t) \ \angleBK{ \Pe_{t}, \pN_s(\Cdot-y',x) } }_q
	\leq 
	\BK{ \absBK{\log s}+1 } C,
\\
	&
	\label{eq:p0Dapri}
	\VertBK{ \angleBK{ \Pe_{0}, \pN_s(\Cdot,x) } }_q
	\leq C.
\end{align}
\end{lemma}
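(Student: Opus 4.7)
The plan is to exploit the fact that at each fixed time the particle configuration (up to a shift by $\Xe_{(0)}(t)$) is a Poisson point process on $\bbR_+$ of intensity $2\e^{-1/2}$, reducing both estimates to $L^q$-moment bounds for Poisson integrals of the Neumann heat kernel $\pN_s$.

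For \eqref{eq:p0Dapri}, $\{\Xe_i(0)\}_{i\in\N}$ is itself such a PPP, so $\angleBK{\Pe_0,\pN_s(\Cdot,x)}=\e^{1/2}\sum_i \pN_s(\Xe_i(0),x)$ is $\e^{1/2}$ times a Poisson integral. Applying standard moment bounds for Poisson integrals (e.g., via the cumulant generating function or a Rosenthal-type inequality), I reduce the $L^q$ estimate to controlling integrals of the form $\e^{(k-1)/2}\int_0^\infty \pN_s(y,x)^k\,dy$ for $k=1,\ldots,q$. These are handled uniformly using the explicit Gaussian form of $p_s$, so that the combination of the $\e^{(k-1)/2}$ prefactor with the Gaussian $L^k$ norms produces a bound depending only on $q$.

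For \eqref{eq:pDapri}, I rewrite via \eqref{eq:EMz}
\[
	\angleBK{\Pe_t,\pN_s(\Cdot-y',x)} = \e^{1/2}\pN_s(\Xe_{(0)}(t)-y',x) + \e^{1/2}\int_{(0,\infty)}\pN_s(z+\Xe_{(0)}(t)-y',x)\,d\tilde{\mu}(z),
\]
where, at equilibrium, $\tilde{\mu}$ is a PPP on $(0,\infty)$ of intensity $2\e^{-1/2}$. On $\Se_b(t)$, $|\Xe_{(0)}(t)|\leq\e^b$, so the atom term is bounded by $C\e^{1/2}s^{-1/2}$, which is $O(1)$ for $s\geq\e$ and absorbed into the $|\log s|+1$ factor for $s<\e$. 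The Poisson-integral term then reduces, conditionally on $\Xe_{(0)}(t)$ and via a translation by at most $\e^b$, to the form handled in \eqref{eq:p0Dapri}, since the $L^k(\bbR_+)$ norms of the shifted kernel $\pN_s(\Cdot+u-y',x)$ remain comparable to those of $\pN_s(\Cdot,x)$ on $\bbR$ up to a universal constant.

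The main obstacle will be producing a logarithmic-in-$s$ bound in \eqref{eq:pDapri}, rather than a polynomial blow-up as $s\to 0$. In the regime $s\ll\e$ the kernel is sharply peaked of height $\sim s^{-1/2}$, and each individual particle near its peak can contribute $\sim\e^{1/2}s^{-1/2}$ to the sum, so direct moment estimates give polynomial dependence on $s^{-1}$. I expect to absorb this into $|\log s|+1$ by applying a Chernoff-type estimate on the Poisson cumulant generating function together with a dyadic decomposition of $\pN_s$ near its peak contributing only $O(|\log s|)$ active scales, leveraging the smallness of the Poisson parameter $\sim\sqrt{s/\e}$ in the effective window and optimizing over the Chernoff parameter on each scale.
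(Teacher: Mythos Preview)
Your route diverges from the paper's in two places, and in the second there is a real gap.

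For \eqref{eq:p0Dapri} your cumulant/Rosenthal reduction to the quantities $\e^{(k-1)/2}\int_0^\infty \pN_s(y,x)^k\,dy$ is a legitimate alternative to the paper's device, which is instead to majorize $p_s$ by a step function, $p_s(z)\le s^{-1/2}\sum_{j\ge0}p(j)\,\ind_{[j,j+1)}(|z|s^{-1/2})$, and then bound $L^q$-moments of the resulting scaled Poisson window counts $G^\e_j$. Note however that $\int p_s^k\asymp s^{-(k-1)/2}$, so your basic quantity is $\asymp(\e/s)^{(k-1)/2}$, which is \emph{not} a constant depending only on $q$; both approaches face this same $s$-dependence when $s\ll\e$.

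For \eqref{eq:pDapri} the difficulty is more serious. You write the integral ``conditionally on $\Xe_{(0)}(t)$'' and then invoke the $t=0$ estimate after translation. But stationarity only says that $\tilde\mu$ is a PPP \emph{marginally}; the gap configuration at time $t$ is not independent of $\Xe_{(0)}(t)$, so the conditional law of $\tilde\mu$ given the shift is not Poisson and the reduction is not justified. The paper bypasses this by a different mechanism: it bounds $\Se_b(t)\langle\Pe_t,p_s(\cdot-x')\rangle$ from above by a \emph{supremum over deterministic shifts} $|x''-x'|\le1$ of scaled window counts of $\EMz_t$, and since $\EMz_t$ is PPP unconditionally one can control this sup directly. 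The $|\log s|$ factor then arises from the maximum over the $O(s^{-1/2})$ essentially distinct windows of width $\sqrt s$ inside $[x'-1,x'+1]$, not from any Chernoff optimization on a single Poisson integral as you propose. Relatedly, isolating the ``atom'' $\e^{1/2}\pN_s(\Xe_{(0)}(t)-y',x)$ is a complication rather than a simplification: this term is of order $\e^{1/2}s^{-1/2}$, which is not dominated by $|\log s|+1$ when $s\ll\e$ and so cannot be ``absorbed'' as you claim. The paper never separates the lowest particle; it simply contributes to the same window count as every other point.
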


\begin{proof}
With $ \pN_s(y,x) := p_s(y-x) + p_s(y+x) $
and $ \Se_b(t) $ decreasing in $ b $,
it clearly suffices to prove, for any fixed $ x'\in\bbR $,
\begin{align}
	&
	\label{eq:Ppapri}
	\VertBK{ \Se_0(t) \ \angleBK{ \Pe_{t}, p_s(\Cdot-x') } }_q
	\leq
	\BK{ \absBK{\log s}+1 } C,
\\
	&
	\label{eq:P0papri}
	\VertBK{ \angleBK{ \Pe_{0}, p_s(\Cdot-x') } }_q
	\leq C.
\end{align}
Since $ p(z) $ decreases in $ |z| $,
we have 
$ 
	p_s(z)
	\leq s^{-1/2} \sum_{j=0}^\infty p(j) \ind_{[j,j+1)}(|z|s^{-1/2}).
$
Using this, we obtain
\begin{align}
	&
	\label{eq:Ppapri:t}
	\Se_0(t) \anglebk{ \Pe_{t}, p_s(\Cdot-x') } 
	=
	\Se_0(t) \e^{1/2} \anglebk{ \EM_{t}, p_s(\Cdot-x') } 
	\leq
	\sum_{j=0}^\infty \Se_0(t) \Fe_j(t,s) p(j),
\\
	&
	\label{eq:Ppapri:0}
	\anglebk{ \Pe_{0}, p_s(\Cdot-x') }
	=
	\e^{1/2} \anglebk{ \EM_{0}, p_s(\Cdot-x') }
	\leq
	\sum_{j=0}^\infty \Ge_j(s) p(j),
\end{align}
where
\begin{align*}
	&
	\Fe_j(t,s) 
	:=
	s^{-1/2} \e^{1/2}
	\angleBK{ \EM_t, \ind_{[j,j+1)} \BK{ \absBK{\Cdot-x'}s^{-1/2} } },
\\
	&
	\Ge_j(s) := 
	s^{-1/2} \e^{1/2} \angleBK{ \EM_0, \ind_{[j,j+1)} \BK{ \absBK{\Cdot-x'}s^{-1/2} } }.
\end{align*}
With $ \EM_0 \sim\PPPp $, we have that $ \Vertbk{\Ge_j}_q \leq C(q) $.
Combining this with \eqref{eq:Ppapri:0}, 
using $ \sum_{j=0}^{\infty} p(j) < \infty $,
we conclude \eqref{eq:P0papri}.
As for \eqref{eq:Ppapri:t}, 
letting
\begin{align*}
	\He_j(t,s) :=
	\sup_{|x''-x'|\leq 1}
	\curBK{
		s^{-1/2} 
		\anglebk{ \EMz_t, \ind_{[j,j+1)} ( |\Cdot-x'|s^{-1/2} ) }
	},
\end{align*}
since $ \EM_t $ and $ \EMz_0 $ differ only by the shift of $ \Xe_0(s) $,
with $ \Se_0(t) $ as in \eqref{eq:Kb},
we have
$
	\Se_0(t) \Fe_j(t,s) \leq \He_j(t,s).
$
With $ \EMz_t\sim\PPPp $,
\eqref{eq:Ppapri} now follows in a way similar to \eqref{eq:P0papri}.
The only difference is the maximum over $ \{ x'': |x''-x'|\leq 1\} $,
which results in the extra $ |\log s| $ factor.
\end{proof}

\section{Proof of Proposition~\ref{prop:fluConvv}}\label{sect:fluConv}

\subsection{Proof of part~\ref{enu:conato}}
Fixing $ b\in(0,1/4) $, $ b'\in(1/8,1/4) \cap [b,\infty) $ and $ T\in\bbR_+ $,
we show
\begin{align}\label{eq:ato:claim}
	\lim_{(\e,\delta)\to (0,0)}
	\Se_{b'}(T)
	\BK{
		\sup_{t\in[0,T]} \sup_{x\in\bbR_+}
		\absBK{ \atoo_{t}(x)  }
	}  
	= 0.
\end{align}
The desired result $ \atoo_\Cdot(\Cdot) \Rightarrow 0 $
then follows since $ \Se_{b'}(T) \to_\text{P} 1 $ (by Proposition~\ref{prop:Xotig}).

Turning to proving \eqref{eq:ato:claim},
fixing $ t\in[0,T] $,
by \eqref{eq:A} and \eqref{eq:ato} we have
\begin{align*}
	\Se_{b'}(T) \absBK{ \atoo_t(x) } 
	\leq 
	2\e^{-1/4} \Se_{b'}(T)
	\int_0^t \int_0^{\Xe_{(0)}(s)}
	\absBK{ \partial_s\Psi_{t+\delta-s}(y,x+\e^b) }
	dyds.
\end{align*}
Since here $ \sup_{s\in[0,T]}\{|\Xe_{(i)}(s)|\} \leq \e^{b'} $, 
we may integrate over $ \int_{-\delta}^{T+1} \int_{-\e^{b'}}^{\e^{b'}} $ instead.
After exchanging the order of integrations,
we integrate over $ s\in(-\delta,T+1) $
using the readily verified identity
$ |\partial_s \Psi_s(y,x+\e^{b})| = -\sign(y) \partial_s\Psi(y,x+\e^{b}) $
to obtain
\begin{align}\label{eq:atocon:}
	\Se_{b'}(T) \absBK{ \atoo_{t}(x) }
	\leq
	2\e^{-1/4} \int_{-\e^{b'}}^{\e^{b'}} 
	\absBK{ \Psi_{T+1+\delta}(y,x+\e^b) - \Psi_{0}(y,x+\e^b) } dy.
\end{align}
Let $ f(y) :=  \Psi_{T+1+\delta}(y,x+\e^b) - 1 $.
Since $ \Psi_0(y,x+\e^b) =1 $, for all $ x\geq 0 $ and $ |y| \leq \e^{b'} \leq \e^{b} $,
we have
$ 
	| \Psi_{T+1+\delta}(y,x+\e^b) - \Psi_{0}(y,x+\e^b) | 
	= |f(y)|
$. 
Further, since $ f(0)=0 $ and $ f'(y) = -\pN_{T+\delta+1}(y,x+\eta) $, 
we further deduce
$ 
	|f(y)| \leq
	C |y| (T+1+\delta)^{-1/2}
	\leq C |y|.
$
Plugging this into \eqref{eq:atocon:},
we obtain
$
	\Se_{b'}(T) | \atoo_{t}(x) |
	\leq
	C \e^{-1/4+2b'}, 
$
thereby, with $ b'>1/8 $, concluding \eqref{eq:ato:claim}.

\subsection{Proof of part~\ref{enu:conic}}
Recall $ \Psibfe_t(y,x) $ and $ \pNbfe_t(y,x) $ are defined as in \eqref{eq:PsipNbfe}.
By Lemma~\ref{lem:Vdec}, we have $ \icQ_t(x) = \ic_t(x) - 2 \rd_t(x) $,
for
\begin{align}
	&
	\label{eq:ic}
	\ic_t(x) := \e^{1/4} \sum_{i=0}^\infty 
	\BK{1-2Y_i(0)} \Psibfe_{t}(\Xe_{(i)}(0),x),
\\
	&
	\label{eq:rd}
	\rd_t(x) := \e^{-1/4} \sum_{i=0}^\infty \int_{\Xe_{(i)}(0)}^{\Xe_{(i+1)}(0)} 
	\BK{ \Xe_{(i+1)}(0) - y } \pNbfe_{t}(y,x) dy.
\end{align}

We first show that $ \rd_\Cdot(\Cdot) \Rightarrow 0 $,
or more explicitly,
\begin{align}\label{eq:rdBd}
	\Ex\BK{ \sup_{t\in[0,T]} \sup_{x\in[0,L]} \absBK{ \rd_{t}(x) } }
	\leq
	C \e^{1/4}|\log\e|,
\end{align}
for some $ C=C(T,L)<\infty $ and for all $ \e\in(0,1/4] $ and $ \delta,\eta \in(0,1] $.

\begin{proof}[Proof of \eqref{eq:rdBd}]
Fixing $T,L\geq 0$, we let $ C=C(T,L) $.
To bound $ \rd_t(x) $, in \eqref{eq:rd} we replace $ (\Xe_{(i+1)}(0)-y) $ with $ \e^{1/2} Y_i(0) $,
and then divide the sum into the sums over $ i\leq \e^{-1} $ and over $ i>\e^{-1} $.
For the former replacing each $ Y_i(0) $ (with $ i\leq\e^{-1} $)
by $ \overline{Y}^\e := \sup_{i\leq \e^{-1}} Y_i(0) $,
we obtain
\begin{align}
	&
	\label{eq:rdBd:R12}
	\sup_{t\in[0,T]} \sup_{x\in[0,L]} \absBK{ \rd_t(x) } 
	\leq
	 \Re_1 + \Re_2,
\\ 
	&
	\notag
	\Re_1 :=
	\e^{1/4} \overline{Y}^\e \int_{\Xe_{(0)}(0)}^{\Xe_{(\ceilbk{\e^{-1}})}(0)} \pNbfe_{t}(y,x) dy
	\leq 2 \e^{1/4} \overline{Y}^\e,
\\
	&
	\label{eq:rdBd:R2}
	\Re_2 :=
	\e^{1/4} \sum_{i>\e^{-1}} Y_i
	\sup_{t\in[0,T]} \sup_{x\in[0,L]} 
	\int_{\Xe_{(i)}(0)}^{\infty} \pNbfe_{t}(y,x) dy.
\end{align}
With $ \{Y_i(0)\} \sim \bigotimes_{i\in\N} \Exp(2) $,
we have $ \Ex (\Re_1) \leq C \e^{1/4}|\log\e| $.
As for $ \Re_2 $, 
from \eqref{eq:Psi} we have 
\begin{align}\label{eq:PhiQsp}
 	0 \leq \Psibfe_t(x,y) \leq C(T,L) (e^{-y}\wedge 1), \quad \forall 
 	t\in[0,T], \ x\in[0,L],\  y\in\bbR_+.
\end{align}
Plugging this into \eqref{eq:rdBd:R2}, we obtain
$	
	\Re_2 \leq
	C \e^{1/4} \sum_{i>\e^{-1}} Y_i \exp(-\Xe_{(i)}(0)).
$
Further applying \eqref{eq:Iebd} for $ f_i = Y_i $,
we conclude
\begin{align*}
	\Ex(R_2) 
	\leq C
	\sum_{j=0}^\infty e^{-j/4}
	\BK{ \sum_{i\in\Je_j} \ind_\curBK{i>\e^{-1}} \VertBK{Y_i}^2_2 }^{1/2}
	\leq
	\e^{-1/4}C
	\exp\BK{-\e^{-1/2}/4}.
\end{align*}
Combining the preceding bounds on $ \Ex(R_1) $ and $ \Ex(R_2) $ with \eqref{eq:rdBd:R12},
we conclude \eqref{eq:rdBd}.
\end{proof}

With \eqref{eq:ato:claim}, it then suffices to show:
\begin{lemma}\label{lem:ictig}
We have that $\{\ic_{\Cdot}(\Cdot)\}_{\bfe} \subset C(\bbR^2_+,\bbR) $
and the processes are tight in $ C(\bbR^2_+,\bbR) $. 
\end{lemma}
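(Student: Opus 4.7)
The plan is to prove separately (i) that $\ic_\Cdot(\Cdot) \in C(\bbR_+^2,\bbR)$ for each fixed $\bfe$, and (ii) tightness of the family $\{\ic_\Cdot(\Cdot)\}_\bfe$ via moment estimates derived from a compensated Poisson structure. For (i), note that for $\delta>0$ fixed the kernel $\Psibfe_t(y,x)=\Psi_{t+\delta}(y,x+\eta)$ is jointly $C^\infty$ in $(t,x)$ with super-Gaussian decay in $y$ uniformly on compacts; combined with the exponential bound \eqref{eq:Leb:ranked} on $\{\Xe_{(i)}(0)\}_i$, the series \eqref{eq:ic} converges in $L^q$ uniformly on compacts of $(t,x)$, yielding an a.s.\ continuous modification.

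For (ii), the key reduction is the following. By Lemma~\ref{lem:Vdec} applied at $t=0$ (where $\anglebk{\Ae_0,\Cdot}=0$ since $\Xe_{(0)}(0)=0$), we have $\icQ_t(x)=\ic_t(x)-2\rd_t(x)$, while the estimate \eqref{eq:rdBd} already gives $\rd_\Cdot(\Cdot)\to 0$ uniformly on compacts in probability. Hence tightness of $\{\ic\}_\bfe$ reduces to tightness of $\{\icQ\}_\bfe$. Since $\{\Xe_{(i)}(0)\}_i\sim \PPPp$ (with $\gamma=1$), the pairing $\icQ_t(x)=\anglebk{\Qe_0,\Psibfe_t(\Cdot,x)}$ is, up to the $\e^{1/4}$ factor, a compensated Poisson integral. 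A Rosenthal-type inequality for such integrals gives, for integer $p\geq 2$,
\begin{align*}
	\Ex\absBK{\anglebk{\Qe_0,\phi}}^{2p}
	\leq
	C_p\BK{\VertBK{\phi}_{L^2(\bbR_+)}^{2p}+\e^{(p-1)/2}\VertBK{\phi}_{L^{2p}(\bbR_+)}^{2p}}.
\end{align*}
Applied with $\phi(y)=\Psibfe_t(y,x)-\Psibfe_{t'}(y,x')$ (bounded by $4$, so that $\|\phi\|_{L^{2p}}^{2p}\leq 4^{2p-2}\|\phi\|_{L^2}^{2}$), the task reduces to the uniform-in-$\bfe$ $L^2$-estimate
\begin{align*}
	\int_0^\infty \BK{\Psibfe_t(y,x)-\Psibfe_{t'}(y,x')}^2 dy
	\leq
	C\BK{|t-t'|^{1/2}+|x-x'|}.
\end{align*}

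For the spatial part, one identifies $\Psi_s$ as the free heat semigroup $e^{s\Delta/2}$ applied to the initial datum $\Psi_0(\Cdot,x)=2\ind_{\{y<-x\}}+\ind_{\{-x<y<x\}}$; although $\Psi_0\notin L^2(\bbR)$, the difference $\Psi_0(\Cdot,x)-\Psi_0(\Cdot,x')$ is a sum of compactly supported indicators of $L^2(\bbR)$-norm $\sqrt{2|x-x'|}$, and the $L^2$-contractivity of the heat semigroup then yields the spatial bound uniformly in $s$. For the temporal part, since $\Psi_0\notin L^2(\bbR)$ one first differentiates in $x$ (so that $\partial_x\Psi_s\in L^2(\bbR)$) and applies Parseval to $p_s-p_{s'}$, or alternatively exploits the heat equation $\partial_s\Psi_s=\tfrac12\partial_{yy}\Psi_s$ via direct heat-kernel computations, to obtain the $|t-t'|^{1/2}$ rate.

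Setting $h:=|t-t'|^{1/2}+|x-x'|$, the above yields $\Ex|\icQ_t(x)-\icQ_{t'}(x')|^{2p}\leq C_p(h^p+\e^{(p-1)/2}h)$. For $p$ sufficiently large the first term satisfies the Chentsov--Kolmogorov criterion in the parabolic metric on $\bbR_+^2$ (of Hausdorff dimension $3$), yielding uniform-in-$\bfe$ Hölder continuity on scales $h\geq \e^{1/2}$ with exponents approaching $1/4$ in $t$ and $1/2$ in $x$ (matching the expected SHE regularity from \eqref{eq:icc}); on smaller scales $h<\e^{1/2}$ the $L^{2p}$-norm of the increment is bounded by $C\e^{1/4}$, which vanishes uniformly and supplies the required equicontinuity. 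The main obstacle is executing the uniform temporal $L^2$-estimate down to $s,s'\to 0$, for which the $\partial_x$-reduction places the problem within the standard Fourier-analytic framework, but care is needed to keep constants uniform as the smoothing $\delta\downarrow 0$.
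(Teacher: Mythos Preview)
Your approach differs genuinely from the paper's. The paper works directly with $\ic$, viewed as the limit of the discrete-time martingale $k\mapsto \e^{1/4}\sum_{i\le k}(1-2Y_i(0))\,\Psibfe_t(\Xe_{(i)}(0),x)$ in the particle index (this is a martingale because $\Psibfe_t(\Xe_{(i)}(0),x)$ is $\sigma(Y_0,\dots,Y_{i-1})$-measurable while $1-2Y_i(0)$ is centered and independent of that $\sigma$-algebra). The \ac{BDG} inequality then bounds moments by the quadratic variation $\anglebk{\Pe_0,(\Delta\Psi)^2}$, whose $L^{q/2}$-norm is estimated pointwise via $(\Delta\Psi)^2\le 2\int_x^{x'}\pNbfe_t(\cdot,z)\,dz$ together with the PPP bound \eqref{eq:p0Dapri}. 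This yields moment bounds of the clean form $\Vert\ic_t(x)-\ic_t(x')\Vert_q\le C|x-x'|^{1/2}$ (and analogously in $t$), directly feeding Lemma~\ref{lem:KC}.

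Your route via $\icQ$ and Rosenthal is elegant in that it reduces everything to \emph{deterministic} $L^2$-estimates on $\Psibfe_t(\cdot,x)-\Psibfe_{t'}(\cdot,x')$, and your computation of those (heat-semigroup contractivity for the $x$-increment, scaling for the $t$-increment) is correct and uniform in $\delta$---contrary to your worry, the temporal $L^2$-estimate is not the obstacle. The real gap is downstream: Rosenthal necessarily produces the correction term $\e^{(p-1)/2}\|\phi\|_{L^{2p}}^{2p}$, so with $|\phi|\le 4$ you obtain $\Ex|\icQ(z)-\icQ(z')|^{2p}\le C_p(h^p+\e^{(p-1)/2}h)$, which for $h\ll\e^{1/2}$ scales like $h^1$, far below the $h^{3+}$ threshold for Kolmogorov--Chentsov in the parabolic metric. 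Your two-scale fix controls the modulus down to mesh $\e^{1/2}$, but the sub-grid step---``the $L^{2p}$-norm of the increment is bounded by $C\e^{1/4}$, which \dots\ supplies the required equicontinuity''---does not close: a single-increment bound of size $\e^{1/4}$ does not control the supremum of oscillations over the $\sim\e^{-3/2}$ cells, and chaining within a cell fails (the exponent is $1<3$). Nor can smoothness in $\delta$ rescue this, since $\partial_x\Psibfe$ blows up like $\delta^{-1/2}$ while $\delta\to 0$ independently of $\e$. The paper's martingale route sidesteps the issue because the \emph{random} quadratic variation $\anglebk{\Pe_0,(\Delta\Psi)^2}$ has $L^{q/2}$-norm of order $h$ uniformly in $\e$; the PPP averaging absorbs precisely the discreteness that Rosenthal's higher-cumulant term records.
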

\begin{lemma}\label{lem:icConv}
As $ \bfe\to\bfz $, $\{\ic_{\Cdot}(\Cdot)\}_{\bfe}$
converges in finite dimensional distribution 
to a centered Gaussian process $ \icc_\Cdot(\Cdot) $
with the covariance \eqref{eq:icCov}.
\end{lemma}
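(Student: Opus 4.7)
The plan is to reduce the finite-dimensional convergence of $\ic_\Cdot(\Cdot)$ to that of $\icQ_\Cdot(\Cdot)$, and then compute the joint characteristic function explicitly using the Poisson structure of $\EM_0$. Recall from the decomposition immediately preceding \eqref{eq:rdBd} that $\icQ_t(x) = \ic_t(x) - 2\rd_t(x)$. Since the bound \eqref{eq:rdBd} shows that $\rd$ vanishes uniformly on compacts in $L^1$, and hence in probability, finite-dimensional convergence of $\ic$ is equivalent to that of $\icQ$. It thus suffices to show that $\{\icQ_\Cdot(\Cdot)\}_\bfe$ converges in finite-dimensional distribution to $\icc_\Cdot(\Cdot)$.

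The key observation is that $\EM_0 \sim \PPP_+(2\e^{-1/2})$ on $\bbR_+$, so for each test function $\phi$ the random variable $\anglebk{\Qe_0,\phi}$ is an $\e^{1/4}$-scaled centered Poisson functional. Fix finitely many $(t_k,x_k)\in\bbR_+^2$ with $k=1,\ldots,n$ and $\theta_k\in\bbR$, and set $F^\bfe(y) := \sum_{k=1}^n \theta_k \Psibfe_{t_k}(y,x_k)$. The Poisson characteristic functional then yields
\begin{align*}
\Ex \exp\Bigl(i\sum_k \theta_k \icQ_{t_k}(x_k)\Bigr)
= \exp\Bigl(2\e^{-1/2}\int_0^\infty \bigl(e^{i\e^{1/4}F^\bfe(y)} - 1 - i\e^{1/4}F^\bfe(y)\bigr)\,dy\Bigr).
\end{align*}
Taylor expanding via $|e^{iu}-1-iu|\leq u^2/2 + |u|^3/6$, the exponent on the right equals $-\int_0^\infty (F^\bfe(y))^2\,dy + O\bigl(\e^{1/4}\int_0^\infty |F^\bfe(y)|^3\,dy\bigr)$.

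By \eqref{eq:PhiQsp}, $|F^\bfe(y)| \leq C(T,L)(e^{-y}\wedge 1)$ uniformly in $\bfe$ on the relevant compact range of $(t_k,x_k)$, and $\Psi_{t_k+\delta}(y,x_k+\eta) \to \Psi_{t_k}(y,x_k)$ pointwise almost everywhere in $y$ as $(\delta,\eta)\to (0,0)$ (including at $t_k=0$, where $\Psi_0(y,x)=\ind_{y\leq x}$ for $y,x\geq 0$). Dominated convergence then gives $F^\bfe \to F := \sum_k\theta_k\Psi_{t_k}(\Cdot,x_k)$ in both $L^2(\bbR_+)$ and $L^3(\bbR_+)$. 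Consequently, the joint characteristic function converges to
\begin{align*}
\exp\Bigl(-\int_0^\infty F(y)^2\,dy\Bigr) = \exp\Bigl(-\sum_{k,l}\theta_k\theta_l \int_0^\infty \Psi_{t_k}(y,x_k)\Psi_{t_l}(y,x_l)\,dy\Bigr),
\end{align*}
which is precisely the characteristic function of a centered Gaussian vector whose covariance matches \eqref{eq:icCov}. L\'evy's continuity theorem then delivers the claimed finite-dimensional convergence.

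The only step requiring technical care is the $L^2$ and $L^3$ convergence of $F^\bfe$ when some $t_k$ is at (or near) $0$, where $\Psi_{t_k}$ degenerates to an indicator; this is handled by the uniform envelope \eqref{eq:PhiQsp} and dominated convergence. All remaining steps are a standard computation with the Poisson characteristic functional.
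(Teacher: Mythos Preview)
Your argument is correct, but it takes a genuinely different route from the paper's own proof. The paper works directly with $\ic_t(x)$ and applies a martingale central limit theorem (their Lemma~\ref{lem:mgCLT}) to the discrete-time martingale
\[
k \longmapsto \e^{1/4}\sum_{i=0}^{k}(1-2Y_i(0))\,\Psibfe_t(\Xe_{(i)}(0),x),
\]
verifying the Lindeberg condition and the convergence of the predictable quadratic variation to the correct constant. The quadratic variation step in particular costs them a page of Riemann-sum and tail estimates. You instead transfer the problem to $\icQ$ via the already-established bound \eqref{eq:rdBd} and Slutsky, and then read off the Gaussian limit from the explicit Poisson characteristic functional. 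Your route is shorter and more elementary because it exploits the exact law of $\EM_0$; the paper's approach is more robust in that it only uses the martingale-difference structure of the increments $(1-2Y_i(0))$, not the full Poisson law.

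One small caveat worth stating explicitly: since $X_{(0)}(0)=0$, the empirical measure $\EM_0$ is a rate-$2\e^{-1/2}$ Poisson point process on $(0,\infty)$ \emph{plus a deterministic atom at $0$}. This extra atom contributes a multiplicative factor $\exp\bigl(i\e^{1/4}F^{\bfe}(0)\bigr)$ to your characteristic function; as $|F^{\bfe}(0)|\le C$ uniformly, this factor tends to $1$ and does not affect the limit. With this remark, your proof is complete.
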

\noindent
We prove Lemma~\ref{lem:ictig} (as well as Lemma~\ref{lem:Mtig}) 
by applying the following special form of the Kolmogorov--Chentsov criterion of tightness
(see \cite[Corollary 14.9]{kallenberg02}).

\begin{lemma}[Kolmogorov--Chentsov]\label{lem:KC}
A given collection of $ C(\bbR^2_+,\bbR) $-valued processes
$ \{K^{\bfe}_\Cdot(\Cdot)\}_{\bfe} $ is tight if,
for some $ \alpha\in(0,1] $, and for all $ q\in(1,\infty) $, $ T, L \in\bbR_+ $,
there exists $ C=C(T,L,\alpha,q) \geq 0 $ such that
\begin{align}
	&
	\label{eq:KCbd}
	\Vertbk{K^{\bfe}_0(0)}_q \leq C,
\\
	&
	\label{eq:KCsp}
	\Vertbk{ K^{\bfe}_{t}(x) - K^{\bfe}_{t}(x') }_q \leq |x-x'|^{\alpha/2}C,
\\
	&
	\label{eq:KCti}
	\Vertbk{ K^{\bfe}_{t}(x) - K^{\bfe}_{t'}(x) }_q \leq |t-t'|^{\alpha/4}C,	
\end{align}
for all $ t,t'\in[0,T] $, $ x,x'\in[0,L] $,
$ \e $, $ \delta $ and $ \eta $ sufficiently small.
\end{lemma}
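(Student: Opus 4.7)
The plan is to apply the multiparameter Kolmogorov-Chentsov moment criterion on each compact rectangle $[0,T]\times[0,L]\subset\bbR_+^2$, and then deduce tightness on $\bbR_+^2$ by a projective-limit argument. Since $C(\bbR^2_+,\bbR)$ under uniform convergence on compacts is Polish with topology generated by the restrictions onto such rectangles, it suffices to show tightness of $\{K^{\bfe}_\Cdot(\Cdot)|_{[0,T]\times[0,L]}\}_{\bfe}$ in $C([0,T]\times[0,L],\bbR)$ for every $T,L\in\bbR_+$.

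Fixing $T,L\in\bbR_+$, the next step is to merge (\ref{eq:KCsp}) and (\ref{eq:KCti}) into a single isotropic H\"older moment bound. By the triangle inequality and the estimate $|x-x'|^{\alpha/2}\leq L^{\alpha/4}|x-x'|^{\alpha/4}$ for $x,x'\in[0,L]$, we obtain
\begin{align*}
	\VertBK{ K^{\bfe}_{t}(x) - K^{\bfe}_{t'}(x') }_q
	\leq C' \BK{ |t-t'|^{\alpha/4} + |x-x'|^{\alpha/4} }
	\leq 2C' |(t,x)-(t',x')|_\infty^{\alpha/4},
\end{align*}
for a constant $C'=C'(T,L,\alpha,q)$ independent of $\bfe$, where $|\cdot|_\infty$ is the sup-metric on $\bbR^2$. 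Choosing $q$ so large that $\alpha q/4>2$ (the dimension of the parameter rectangle), \cite[Corollary 14.9]{kallenberg02} then delivers, uniformly in $\bfe$, a moment bound
\begin{align*}
	\VertBK{ \sup_{u\neq v} \frac{|K^{\bfe}_u - K^{\bfe}_v|}{|u-v|_\infty^{\gamma}} }_q \leq C'',
\end{align*}
for any fixed $\gamma\in(0,\alpha/4-2/q)$, where the supremum runs over distinct $u,v\in[0,T]\times[0,L]$ and $C''=C''(T,L,\alpha,q,\gamma)$. No passage to a modification is needed, as each $K^{\bfe}$ already lies in $C(\bbR^2_+,\bbR)$ by hypothesis.

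Combined with (\ref{eq:KCbd}), this bounds uniformly in $\bfe$ the $L^q$-norms of both $|K^{\bfe}_0(0)|$ and the $\gamma$-H\"older seminorm of $K^{\bfe}$ on $[0,T]\times[0,L]$. By Chebyshev's inequality, for every $\eta>0$ there exists $R=R(\eta)<\infty$ such that, with probability at least $1-\eta$ uniformly in $\bfe$, $K^{\bfe}|_{[0,T]\times[0,L]}$ lies in the set of functions $f$ satisfying $|f(0,0)|\leq R$ and $|f(u)-f(v)|\leq R|u-v|_\infty^{\gamma}$. This set is relatively compact in $C([0,T]\times[0,L],\bbR)$ by the Arzel\`a-Ascoli theorem, yielding the tightness of the restrictions and hence of $\{K^{\bfe}_\Cdot(\Cdot)\}_{\bfe}$ in $C(\bbR^2_+,\bbR)$. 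The principal subtlety is reconciling the anisotropic scaling of (\ref{eq:KCsp})-(\ref{eq:KCti}) with the isotropic Kolmogorov criterion; this is handled cleanly by the downgrade $|x-x'|^{\alpha/2}\leq L^{\alpha/4}|x-x'|^{\alpha/4}$ above, obviating the need for any parabolic-metric version of Kolmogorov-Chentsov.
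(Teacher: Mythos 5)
The paper does not actually prove Lemma~\ref{lem:KC}; it merely labels it as a special form of \cite[Corollary 14.9]{kallenberg02} and cites it without argument. Your proposal supplies a genuine proof, so the comparison is between your blind derivation and a bare citation rather than between two arguments.

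Your argument is correct. The reduction to compact rectangles via the projective structure of $C(\bbR_+^2,\bbR)$ is standard; the homogenization of the anisotropic conditions \eqref{eq:KCsp}--\eqref{eq:KCti} through the downgrade $|x-x'|^{\alpha/2}\le L^{\alpha/4}|x-x'|^{\alpha/4}$ is valid on $[0,L]$ and is exactly the step needed to bring the two bounds to a common exponent; and choosing $q$ with $\alpha q/4>2$ is permitted since the hypotheses hold for all $q\in(1,\infty)$. The Chebyshev--Arzel\`a--Ascoli finish is the standard mechanism underlying Kolmogorov--Chentsov tightness. One small point of exposition: what Kallenberg's Corollary~14.9 actually asserts is the tightness conclusion itself (for continuous processes on a $d$-dimensional index set, given a moment modulus of the form $\Ex\,\rho(X_s,X_t)^a\le C|s-t|^{d+b}$ plus tightness at a point), so after your homogenization step and the choice of large $q$ you could appeal to it directly and stop. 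What you credit it with---a uniform $L^q$ bound on the H\"older seminorm---is the intermediate chaining estimate that its proof produces rather than the statement of the corollary; this is mathematically accurate but slightly misattributed, and it makes your last paragraph re-derive a conclusion the cited result already hands you. That said, making the chaining/seminorm step explicit and then closing with Arzel\`a--Ascoli is a self-contained and correct route, arguably more transparent than the bare citation, and correctly identifies the anisotropy as the only genuine wrinkle.
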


\begin{proof}[Proof of Lemma~\ref{lem:ictig}]
For each $ i\in\N $,
$ (t,x) \mapsto (1-2Y_i(0)) \Psibfe_{t}(\Xe_{(i)}(0),x) $
is continuous.
The series \eqref{eq:ic} defining $ \ic_{\Cdot}(\Cdot) $
converges absolutely,
hence $ \ic_{\Cdot}(\Cdot) \in C(\bbR^2_+,\bbR) $.

Fixing $ T,L\in\bbR_+ $, $ q\in(1,\infty) $, $ x,x'\in[0,L] $ and $ t<t'\in[0,T] $
letting $ C=C(T,L,q) <\infty $,
we next show \eqref{eq:KCbd}--\eqref{eq:KCti}
for $ K^{\bfe}_t(x) = \ic_{t}(x) $ and $ \alpha=1 $.
Consider the discrete time martingale
\begin{align}\label{eq:Mbfe}
	k \longmapsto
	\mbfe_k(t,x) := \e^{1/4} \sum_{i=0}^k (1-2Y_i(0))\Psibfe_t(\Xe_{(i)}(0),x).
\end{align}
With $ \ic_t(x) = \mbfe_{\infty}(t,x) $,
showing \eqref{eq:KCbd}--\eqref{eq:KCti} amounts to
bounding the quadratic variation of  $ \mbfe_\Cdot(t,x) $, 
which we do by using $ \EM_0\sim\PPPp $.

Let $ \anglebk{\Pek_0,f} := \e^{1/2} \sum_{i=0}^k f(\Xe_{(i)}(0)) $
be the $ k $-th approximation of $ \Pe_t $.
The martingale $ \mbfe_k(t,x) $ has quadratic variation 
$ \anglebk{ \Pek_0, \Psibfe_t(\Cdot,x))^2 } $.
Consequently, by the \ac{BDG} inequality and Fatou's lemma, 
letting $ k\to\infty $ we have
\begin{align}
	&
	\label{eq:ictig:ho}
	\VertBK{ \ic_0(0) }^2_{q} \leq C \VertBK{ \angleBK{\Pe_0,(\Psibfe_{0}(\Cdot,0))^2} }_{q/2},
\\
	&
	\label{eq:ictig:space}
	\VertBK{ \ic_{t}(x) - \ic_{t}(x') }^2_{q} \leq C 
	\VertBK{ \angleBK{\Pe_0,(\Psibfe_{t}(\Cdot,x)-\Psibfe_{t}(\Cdot,x'))^2} }_{q/2},
\\
	&
	\label{eq:ictig:time}
	\VertBK{ \ic_{t}(x) - \ic_{t'}(x) }^2_{q} \leq C
	\VertBK{ \angleBK{\Pe_0,(\Psibfe_{t}(\Cdot,x)-\Psibfe_{t'}(\Cdot,x))^2} }_{q/2}.
\end{align}

The estimate \eqref{eq:KCbd}
follows by applying $ \Psibfe_{0}(y,0) \leq C e^{-y} $
(by \eqref{eq:PhiQsp}) to \eqref{eq:ictig:ho}
and then using $ \Vertbk{\anglebk{\Pe_0, \exp(-2\Cdot)}}_{q/2} \leq C $
(by \eqref{eq:Leb:ranked} for $ j=0 $).
To show \eqref{eq:KCsp},
since $ 0 \leq \Psibfe_t(y,x) \leq 2 $, we have
\begin{align}\label{eq:ictig:xhol}
	\BK{ \Psi_{t+\delta}(y,x) - \Psibfe_{t}(y,x') }^2
	\leq
	2
	\int_x^{x'} \absBK{ \partial_z \Psibfe_{t}(z,x) } dz
	=
	2 \int_x^{x'} \pNbfe_{t}(y,z) dz.
\end{align}
Using this in \eqref{eq:ictig:space},
we bound the r.h.s.\ of \eqref{eq:ictig:space} by
$ 
	C \int_x^{x'} \Vertbk{\anglebk{\Pe_0,\pNbfe_{t}(\Cdot,z)}}_{q/2} dz.
$
This, by \eqref{eq:p0Dapri}, is bounded by $ C |x-x'| $,
whereby we conclude \eqref{eq:KCsp}.
Turning to showing \eqref{eq:KCti},
letting $ \Psitilbfe_{t,t'}(y) := \Psibfe_{t}(y,x) - \Psibfe_{t'}(y,x) $,
similar to \eqref{eq:ictig:xhol} we have
\begin{align}
	&
	\notag
	\BK{ \Psitilbfe_{t,t'}(y) }^2
	\leq
	2
	\int_{t}^{t'} \absBK{ \partial_s \Psibfe_s(y,x) } ds
\\
	&
	\label{eq:ictig:thol}
	\quad
	=
	\int_{t}^{t'} s^{-1}
	\absBK{ (y+x+\eta)\pbfe_s(y+x) + (y-x-\eta)\pbfe_s(y-x) } ds.
\end{align}
However, due to the $ s^{-1} $ singularity, the argument 
for proving \eqref{eq:KCsp} does not apply.
To circumvent this problem,
letting
$ 
	g(y) := \ind_\curBK{|x+\eta-y|\leq |t'-t|^{1/2}} 
	\vee \ind_\curBK{|x+\eta+y|\leq |t'-t|^{1/2}},
$
we bound 
$
	\Fe_1 : = 
	\anglebk{ \Pe_0, (1-g) (\Psitilbfe_{t,t'})^2 }
$
and
$
	\Fe_2 : = 
	\anglebk{ \Pe_0, g (\Psitilbfe_{t,t'})^2 }
$
separately.
For $ \Fe_1 $, in \eqref{eq:ictig:thol} 
using $ |s^{-1}zp_s(z)| \leq C|z|^{-1}p_{2s}(z) $
and $ |x+\eta\pm y| \geq |t'-t|^{1/2} $,
we obtain
\begin{align*}
	\BK{ \Psitilbfe_{t,t'}(y) }^2 \BK{ 1-g(y) }
	\leq 
	(t-t')^{1/2} C \int_{t}^{t'} \pNbfe_{2s}(y,x) ds.
\end{align*}
Given this inequality,
we now conclude $ \Vertbk{\Fe_1}_{q/2} \leq C|t-t'| $
by the same argument following \eqref{eq:ictig:xhol}.
As for $ \Fe_2 $, using $ |\Psitilbfe_{t,t'}(y)| \leq 2 $,
we obtain
\begin{align*}
	\Fe_2 \leq 4\e^{1/2} 
	\angleBK{ \Pe_0, \ind_{[x+\eta-(t'-t)^{1/2},x+\eta+(t'-t)^{1/2}]} + \ind_{[-x-\eta-(t'-t)^{1/2},-x-\eta+(t'-t)^{1/2}]} }.
\end{align*}
Combining this with $ \Pe_0 \sim \PPPp $, 
we conclude $ \Vertbk{\Fe_2}_{q/2} \leq C |t'-t|^{1/2} $.
\end{proof}

Next we prove Lemma~\ref{lem:icConv}
using the martingale Central Limit Theorem of \cite[Theorem 2]{brown71}, 
which we state here in the form convenient for our purpose. 
\begin{lemma}[martingale Central Limit Theorem]\label{lem:mgCLT}
Suppose that for any fixed $ \bfe\in(0,1]^3 $, 
$ (\Nbfe_i,\scrFbfe_i) $, $ i= -1, 0, 1,\ldots, \ne $,
is a discrete time $ L^2 $-martingale, 
starting at $ \Nbfe_{-1} = 0 $, 
with the corresponding martingale differences 
$ \Dbfe_i := \Nbfe_{i+1} - \Nbfe_{i} $
and predictable compensator 
$ \anglebk{\Nbfe }_i := \sum_{i'=0}^i \Ex[(\Dbfe_{i'})^2|\scrFbfe_{i'-1}] $.
If, for some $ \sigma_*\in\bbR_+ $, as $ \bfe\to\bfz $,
\begin{align}
	&
	\label{eq:CLTLind}
	\sum_{i=0}^{\ne}
	\Ex \BK{ \absBK{\Dbfe_i}^3 } 
	\longrightarrow
	0,
\\
	&
	\label{eq:CLTqv}
	\angleBK{\Nbfe}_{\ne} \xrightarrow[\text{P}]{} \sigma^2_*,
\end{align}
then $ \Nbfe_{\ne} \Rightarrow \calN(0,\sigma_*) $, 
the mean zero Gaussian with variance $ \sigma_*^2 $.
\end{lemma}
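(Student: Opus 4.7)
The plan is to deduce the lemma directly from Brown's martingale central limit theorem \cite[Theorem 2]{brown71}, specialized to triangular arrays of martingale differences $\{\Dbfe_i\}_{i\leq \ne}$. That theorem concludes $\Nbfe_{\ne} \Rightarrow \calN(0,\sigma_*^2)$ under two hypotheses: \emph{(i)} convergence in probability of the predictable quadratic variation, $\anglebk{\Nbfe}_{\ne} \to \sigma_*^2$; and \emph{(ii)} the Lindeberg condition
\[
	\sum_{i=0}^{\ne}\Ex\bigl[(\Dbfe_i)^2\ind_{\{|\Dbfe_i|>\epsilon\}}\bigr] \longrightarrow 0, \qquad \forall\,\epsilon>0.
\]
Hypothesis \emph{(i)} is verbatim \eqref{eq:CLTqv}, so the only task is to derive \emph{(ii)} from the stronger third-moment hypothesis \eqref{eq:CLTLind}. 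This is a one-line deduction: the pointwise bound $\ind_{\{|x|>\epsilon\}}\leq \epsilon^{-1}|x|$ gives $(\Dbfe_i)^2\ind_{\{|\Dbfe_i|>\epsilon\}}\leq \epsilon^{-1}|\Dbfe_i|^3$, so taking expectations and summing over $i$ yields Lindeberg sum $\leq \epsilon^{-1}\sum_i\Ex|\Dbfe_i|^3 \to 0$ by \eqref{eq:CLTLind}.

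Should a self-contained proof be preferred, I would instead argue via L\'evy's continuity theorem. Set $\phi_{\bfe}(\theta) := \Ex\exp(i\theta\Nbfe_{\ne})$ and substitute, inside the telescoping product $\exp(i\theta\Nbfe_{\ne})=\prod_{i=0}^{\ne}\exp(i\theta\Dbfe_i)$, the Taylor expansion $\exp(i\theta x) = 1 + i\theta x - \tfrac12\theta^2 x^2 + R(\theta,x)$, with $|R(\theta,x)|\leq \tfrac16|\theta|^3|x|^3$. Iterated conditioning on $\scrFbfe_{i-1}$, combined with the martingale property, eliminates every linear factor and reduces $\phi_{\bfe}(\theta)$ to an expectation of $\prod_i\bigl(1 - \tfrac12\theta^2\Ex[(\Dbfe_i)^2|\scrFbfe_{i-1}] + \Ex[R(\theta,\Dbfe_i)|\scrFbfe_{i-1}]\bigr)$. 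The total remainder is bounded in $L^1$ by $\tfrac16|\theta|^3\sum_i\Ex|\Dbfe_i|^3 \to 0$ via \eqref{eq:CLTLind}, while the main product approximates $\exp\bigl(-\tfrac12\theta^2\anglebk{\Nbfe}_{\ne}\bigr)$ by the standard elementary estimate $|\prod(1-z_i)-\exp(-\sum z_i)|\leq (\sum z_i)(\max_i z_i)$ valid for small non-negative $z_i$. Hypothesis \eqref{eq:CLTqv} then forces this limit to $\exp(-\tfrac12\theta^2\sigma_*^2)$ in probability, and bounded convergence (each factor has modulus $\leq 1$) upgrades to pointwise convergence of $\phi_{\bfe}(\theta)$ to the Gaussian characteristic function, yielding the claim.

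The main technical subtlety in the self-contained route is establishing the uniform smallness $\max_i\tfrac12\theta^2\Ex[(\Dbfe_i)^2|\scrFbfe_{i-1}] \to 0$ in probability, which is what legitimizes the $\prod\approx\exp$ approximation. I would handle it via the split $(\Dbfe_i)^2 \leq \epsilon^2 + \epsilon^{-1}|\Dbfe_i|^3$: conditioning on $\scrFbfe_{i-1}$ and taking the maximum over $i$ gives $\max_i\Ex[(\Dbfe_i)^2|\scrFbfe_{i-1}] \leq \epsilon^2 + \epsilon^{-1}\max_i\Ex[|\Dbfe_i|^3|\scrFbfe_{i-1}]$, and the latter maximum tends to zero in $L^1$ since $\Ex\max_i\Ex[|\Dbfe_i|^3|\scrFbfe_{i-1}] \leq \sum_i\Ex|\Dbfe_i|^3 \to 0$ by \eqref{eq:CLTLind}; sending $\epsilon\downarrow 0$ afterward closes the estimate. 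In the Brown-citation route this subtlety is entirely subsumed by the quoted theorem.
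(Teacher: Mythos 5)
Your primary route --- citing Brown's Theorem~2 and reducing the third-moment (Lyapunov) hypothesis \eqref{eq:CLTLind} to the Lindeberg condition via $(\Dbfe_i)^2\ind_{\{|\Dbfe_i|>\epsilon\}}\leq\epsilon^{-1}|\Dbfe_i|^3$ --- is precisely what the paper does; it states the lemma as a reformulation of \cite[Theorem~2]{brown71} and leaves the Lyapunov-to-Lindeberg step implicit. The only point the paper makes explicit that you pass over is that Brown's theorem is formulated for a single martingale sequence, whereas here the martingales $\{(\Nbfe_i,\scrFbfe_i)\}$ are indexed by the continuous parameter $\bfe\to\bfz$; the paper addresses this in a remark by noting that Brown's truncation argument (stopping at $\tau_{\bfe,L}:=\inf\{i:\anglebk{\Nbfe}_i>L\}$) applies uniformly in $\bfe$. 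Since this is a routine adaptation and your Lindeberg deduction is exactly right, the first route matches the paper's proof.

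Your self-contained alternative via characteristic functions is a genuinely different route and is correct in outline. Two details deserve care if you wanted to make it rigorous. First, the estimate $\left|\prod_i(1-z_i)-\exp\left(-\sum_i z_i\right)\right|\leq\left(\sum_i z_i\right)\max_i z_i$ needs $\sum_i z_i = \tfrac12\theta^2\anglebk{\Nbfe}_{\ne}$ to be controlled, which \eqref{eq:CLTqv} gives only in probability; one must work on a high-probability event $\{\anglebk{\Nbfe}_{\ne}\leq M,\ \max_i z_i\leq \epsilon\}$ and use $|\Pi_\bfe|\leq 1$ and $e^{-\sum z_i}\leq 1$ to absorb the complement, which is exactly the role of the truncation in Brown's original proof. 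Second, the iterated-conditioning step that eliminates the linear term is valid because each factor $\Ex[e^{\iota\theta\Dbfe_i}\mid\scrFbfe_{i-1}]=1-z_i+r_i$ is $\scrFbfe_{i-1}$-measurable, and that measurability should be stated since it is what lets the product telescope. The self-contained route has the virtue of dispensing entirely with the single-sequence-to-family adaptation, at the cost of rebuilding the truncation machinery by hand.
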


\begin{remark}
Although the proof of \cite[Theorem 2]{brown71} is for a single martingale,
the same proof applies for a collection of of martingales $ \{(\Nbfe_i,\scrFbfe_i)\}_{\bfe} $ 
as we consider here.
In particular, the truncation argument of \cite{brown71}
applies equally wells here, by letting $ \tau_{\bfe,L}:= \inf\curBK{i:\angleBK{\Nbfe}_i>L} $,
whereby $ \Pro(\Nbfe_i=\Nbfe_{i\wedge\tau_{\bfe,L}},\forall i) \to 1 $
as $ L\to\infty $, uniformly in $ \bfe $.
\end{remark}

\begin{proof}[Proof of Lemma~\ref{lem:icConv}]
Let 
$
	\Gamma_{t,t'}(x,x') := 2\int_0^\infty \Psi_t(y,x) \Psi_{t'}(y,x') dx.
$
Fixing arbitrary $ t_1,\ldots, t_l $ and $ x_1,\ldots,x_l\in\bbR_+ $,
we let $ C=C(t_1,\ldots,t_l,x_1,\ldots,x_l)<\infty $ and
\begin{align*}
	\bfic := (\ic_{t_1}(x_1),\ldots,\ic_{t_l}(x_l)) \in \bbR^l.
\end{align*}
Our goal is to show $ \bfic \Rightarrow \calN(0,\Sigma) $,
where $ \Sigma := (\Gamma_{t_j,t_{j'}}(x_j,x_{j'}))_{j,j'=1}^l $.
Equivalently, fixing arbitrary $ \bfv=(v_i)\in\bbR^l $
and letting
$
	\sigma_* := [ \sum_{j,j'=1}^l v_j v_{j'} \Gamma_{t_j,t_{j'}}(x_j,x_{j'}) ]^{1/2},
$
we show 
\begin{align*}
	\bfv\Cdot\bfic = \sum_{j=1}^l v_j \mbfe_\infty(t_j,x_j) \Rightarrow \calN(0,\sigma_*),
\end{align*}
where $ \mbfe_k(t,x) $ is defined as in \eqref{eq:Mbfe}.
To this end, 
letting $ \ne:= \ceilbk{\e^{-1}} $,
we consider the martingale 
\begin{align}\label{eq:Nbfe}
	\Nbfe_i := 
	\sum_{j=1}^l v_j\mbfe_{k_i}(t_j,x_j),
	\quad
	k_i := i \ind_\curBK{i<\ne} + \infty \ind_\curBK{i=\ne}.
\end{align}
It then suffices to verify 
\begin{enumerate*}[label=\itshape\roman*\upshape)]
\item \eqref{eq:CLTLind};
and \item \eqref{eq:CLTqv}.
\end{enumerate*}

(\textit{i})
Let
$ 
	F^{\bfe}_i 
	:= 
	\sum_{j=1}^{l} v_j \Psibfe_{t_j}(\Xe_{(i)}(0),x_j).
$
With $ \Nbfe_i $ defined as in \eqref{eq:Nbfe}, we have
\begin{align}\label{eq:Lypbd}
	\sum_{i=0}^{\ne} \Ex(|\Dbfe_i|^3) 
	\leq
	\e^{3/4} 
	\Bigg[ 
		\sum_{i\leq \e^{-1} } \Ex\BK{ \absBK{1-2Y_i(0)}^3 (\Fbfe_{i})^3 }
		+
		\Ex\Big( \sum_{i\geq\e^{-1}} (1-Y_i(0)) \Fbfe_i \Big)^3
	\Bigg].
\end{align}
We now show that the r.h.s.\ tends to zero based on the a-priori estimates \eqref{eq:Leb:ranked} and \eqref{eq:Iebd}.
From \eqref{eq:PhiQsp} we obtain $ |\Fbfe_i| \leq C \exp(-\Xe_{(i)}(0)) $.
Using this in \eqref{eq:Lypbd},
we bound the r.h.s. by $ \gbfe_1 + \gbfe_2 $, where
\begin{align}
	&
	\label{eq:icCon:Lyo}
	\gbfe_1 :=
	\e^{3/4} \Ex\Big( \sum_{i\leq\e^{-1}} \absBK{1-2Y_i(0)}^3 e^{-3\Xe_{(i)}(0)} \Big),
\\
	&
	\notag
	\gbfe_2 :=
	\e^{3/4} \BVert \sum_{i\geq\e^{-1}} (1-Y_i(0)) e^{-\Xe_{(i)}(0)} \BVert^3_3.
\end{align}
In \eqref{eq:icCon:Lyo}, replacing each $ |1-2Y_i(0)|^3 $ with
$ \overline{Y}^{\e,*} := \sup_{i\leq \e^{-1}}\{|1-2Y_i(0)|^3\} $,
we obtain
\begin{align*}
	\gbfe_1 
	\leq
	\e^{3/4}
	\BVert \overline{Y}^{\e,*} \BVert_2
	\
	\BVert \sum_{i=0}^\infty e^{-3\Xe_{(0)}(i)} \BVert_2.
\end{align*}
With $ \{Y_i(0)\}\sim\bigotimes_i \Exp(2) $,
we have $ \Vertbk{ \overline{Y}^{\e,*} }_2 \leq C(|\log\e|+1) $,
and by \eqref{eq:Leb:ranked} for $ j=0 $,
we have $ \Vertbk{ \sum_{i=0}^\infty e^{-3\Xe_{(i)}(0)} }_2 \leq C\e^{-1/2} $,
whereby we conclude $ \gbfe_1 \to 0 $.
As for $ \gbfe_2 $,
applying \eqref{eq:Iebd} for $ f_i = |1-Y_i(0)| $,
we obtain
$
	\gbfe_2 \leq \e^{3/4} [\e^{-1/2} \exp(-\e^{-1/2}/C)]^3 \to 0.
$

(\textit{ii})
With $ \Nbfe_i $ defined as in \eqref{eq:Nbfe},
we have
$
	\angleBK{\Nbfe}_{\ne}
	= 
	\sum_{j,j'}^l v_j v_{j'} \Gamma^{\bfe}_{t_{j},t_{j'}}(x_j,x_{j'}),
$
where
\begin{align}\label{eq:icCon:qv}
	\Gamma^{\bfe}_{t,t'}(x,x')
	= 
	\e^{1/2} \sum_{i=0}^\infty \Psibfe_{t}(\Xe_{(i)}(0),x) \Psibfe_{t'}(\Xe_{(i)}(0),x').
\end{align}
In \eqref{eq:icCon:qv}, if we replace each $ \Xe_{(i)}(0) $
by $ \Ex(\Xe_{(i)}(0)) = \e^{1/2}2^{-1}i := \xe_i $,
we obtain the expression 
\begin{align}\label{eq:Gammabfe}
	\Gamma^{\bfe,*}_{t,t'}(x,x')
	:=
	\e^{1/2} \sum_{i=0}^\infty \Psibfe_{t}(\xe_i,x) \Psibfe_{t'}(\xe_i,x').
\end{align}
This, with $ \xe_{i+1}-\xe_{i} = 2^{-1}\e^{1/2} $,
is a Riemann sum approximation of $ \Gamma_{t,t'}(x,x') $.
In particular, by using the continuity of $ (y,\bfe)\mapsto\Psibfe_t(y,x) $,
it is not hard to show that $ \Gamma^{\bfe,*}_{t,t'}(x,x') \to \Gamma_{t,t'}(x,x') $.
Consequently, showing \eqref{eq:CLTqv} is reduced to showing 
$ \Gamma^{\bfe}_{t,t'}(x,x') - \Gamma^{\bfe,*}_{t,t'}(x,x') \to_\text{P} 0 $,
which is in turn implied by
\begin{align}\label{eq:icCon:claim}
	\e^{1/2} \sum_{i=0}^\infty
	\Ex\absBK{
		\Psibfe_{t}(\Xe_{(i)}(0),x) \Psibfe_{t'}(\Xe_{(i)}(0),x')
		- \Psibfe_{t}(\xe_i,x) \Psibfe_{t'}(\xe_i,x')
		}
	\to
	0.
\end{align}

We now prove \eqref{eq:icCon:claim} by using the continuity of $ y\mapsto \Psi_{t}(y,x) $
and the control on $ |\Xe_{(i)}(0)-\xe_i| = |\Xe_{(i)}(0)-\Ex(\Xe_{(i)}(0))| $.
For any $ L>0 $, 
we divide the expression in \eqref{eq:icCon:claim} into $ \GbfeL_1 + \GbfeL_2 $, for
\begin{align}
	&
	\GbfeL_1 :=
	\e^{1/2}
	\sum_{i>L\e^{-1/2}}
	\absBK{
		\Psibfe_{t}(\Xe_{(i)}(0),x) \Psibfe_{t'}(\Xe_{(i)}(0),x')
		- \Psibfe_{t}(\xe_i,x) \Psibfe_{t'}(\xe_i,x')
		},
\\
	&
	\label{eq:icCon:G2}
	\GbfeL_2 :=
	\e^{1/2}
	\sum_{i\leq L\e^{-1/2}}
	\absBK{
		\Psibfe_{t}(\Xe_{(i)}(0),x) \Psibfe_{t'}(\Xe_{(i)}(0),x')
		- \Psibfe_{t}(\xe_i,x) \Psibfe_{t'}(\xe_i,x')
		}.
\end{align}
By \eqref{eq:PhiQsp} and \eqref{eq:Leb:ranked},
for the tail term $ \GbfeL_1 $ we have $ \Ex(\GbfeL_1) \leq C\exp(-L/C) $.
With this,
it then suffices to show 
\begin{align}\label{eq:icCon:G2goal}
	\lim_{\bfe\to\bfz} \Ex(\GbfeL_2) = 0,
	\text{ for any fixed } L>0,
\end{align}
(since we can then further take $ L\to\infty $ after taking $ \bfe\to\bfz $).
To this end, \emph{fixing} arbitrary $ L>0 $, 
we let  $ \de := \sup_{i\le \e^{-1/2}L} |\Xe_{(i)}(0)-\xe_i| $.
With $ \{\Xe_{(i)}(0)\} \sim \PPPp $ we have $ \Pro(|\de|> \e^{1/8}) \to 0 $.
By telescoping,
in \eqref{eq:icCon:G2}, for each $ i $, we bound the corresponding term by 
$
	|\Psibfe_{t}(\Xe_{(i)}(0),x) - \Psibfe_{t}(\xe_i,x) | \Psibfe_{t'}(\xe_i,x')
	+
	\Psibfe_{t}(\xe_i,x) |\Psibfe_{t'}(\Xe_{(i)}(0),x') - \Psibfe_{t'}(\xe_i,x') |.
$
Further using $ |\Psibfe_{t}(x,y)| \leq 2 $
and $ |\Psibfe_s(a,x)-\Psibfe_s(b,x)| = \int_a^b \pNbfe_s(z,x) dz $,
we obtain
\begin{align}\label{eq:icCon:De}
	\GbfeL_2
	\leq 
	C \int_{-\de}^{\de} \anglebk{\Pe_0, \pNbfe_{t}(\Cdot+z,x) +\pNbfe_{t'}(\Cdot+z,x')} dz.
\end{align}
Now consider the cases $ \de\leq \e^{1/8} $ and $ \de> \e^{1/8} $ separately.
For the former combining \eqref{eq:icCon:De} and \eqref{eq:P0papri},
we obtain $ \Ex( \Gbfe_2\ind_\curBK{\de\leq\e^{1/8}} ) \leq C\e^{1/8} \to 0 $.
For the latter using $ \GbfeL_2 \leq C(L) $ (since $ |\Psi_\Cdot(\Cdot)|\leq 2 $),
we conclude
$
	\Ex( \Gbfe_2\ind_\curBK{\De>\e^{1/8}} ) \leq C(L)\Pro(\de>\e^{1/8})\to 0.
$
Therefore \eqref{eq:icCon:G2goal} follows.
\end{proof}

\subsection{Proof of part~\ref{enu:conmg}}
Recall $ \Se_b(\Cdot) $ is defined as in \eqref{eq:Kb}. 
Letting $ \tloc := t\wedge\taue_{1/8} $ and
\begin{align}\label{eq:mgloc}
\begin{split}
	&
	\mgloc_{t,t'}(x) 
	:= 
	\Me_{\tloc,\tploc}
	\BK{ \pNbfe_{t-\Cdot}(\Cdot,x), \infty }
\\
	&
	\quad
	=
	\e^{1/4}
	\sum_{i=0}^\infty
	\int_{t}^{t'}
	\Se_{1/8}(s) 
	\pN_{t'+\delta-s} \BK{ \Xe_i(s),x} d\Be_i(s),
\end{split}
\end{align}
we recall from Proposition~\ref{prop:Xotig}, that for any $ T\in\bbR_+ $,
$ 
	\lim_{\bfe\to\bfz}
	\Pro(\mg_{t}(x)=\mgloc_{0,t}(x), \forall t\in[0,T],x\in\bbR_+) = 1 
$,
so without lost of generality we replace $ \mg_t(x) $ with $ \mgloc_{t}(x) := \mgloc_{0,t}(x) $.

\begin{lemma}\label{lem:Mtig}
The collection of processes $\{\mgloc_\Cdot(\Cdot)\}_{\bfe}\subset C(\bbR_+^2,\bbR)$ 
is tight in $ C(\bbR_+^2,\bbR) $.
\end{lemma}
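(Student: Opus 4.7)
The plan is to verify the Kolmogorov--Chentsov criterion (Lemma~\ref{lem:KC}) for $K^{\bfe}_t(x):=\mgloc_t(x)$. Since $\mgloc_0(0)=0$, the bound \eqref{eq:KCbd} is trivial and the work is entirely in the Hölder estimates \eqref{eq:KCsp} and \eqref{eq:KCti}. In both cases I would apply the \ac{BDG} inequality to an appropriate stochastic integral and then control the resulting quadratic variation by combining heat kernel estimates with Lemma~\ref{lem:Ppapri}.

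For \eqref{eq:KCsp}, fix $t\in[0,T]$ and $x,x'\in[0,L]$. The difference $\mgloc_t(x)-\mgloc_t(x')$ is the terminal value of the martingale $s''\mapsto\e^{1/4}\sum_{i}\int_0^{s''}\Se_{1/8}(s)\bigl(\pN_{t+\delta-s}(\Xe_i(s),x)-\pN_{t+\delta-s}(\Xe_i(s),x')\bigr)d\Be_i(s)$. BDG combined with $\e^{1/2}\sum_{i}f(\Xe_i(s))=\anglebk{\Pe_s,f}$ gives
\[
	\|\mgloc_t(x)-\mgloc_t(x')\|_q^2\leq C\,\BVert \int_0^t \Se_{1/8}(s)\, \angleBK{\Pe_s,\bigl(\pN_{t+\delta-s}(\Cdot,x)-\pN_{t+\delta-s}(\Cdot,x')\bigr)^2}\, ds \BVert_{q/2}.
\]
Interpolating between $\pN_r(y,\Cdot)\leq Cr^{-1/2}$ and $|\pN_r(y,x)-\pN_r(y,x')|\leq C|x-x'|r^{-1}$ yields, for $\alpha\in[0,1/2]$,
\[
	\bigl(\pN_r(y,x)-\pN_r(y,x')\bigr)^2\leq C|x-x'|^{2\alpha}\,r^{-\alpha-1/2}\bigl(\pN_r(y,x)+\pN_r(y,x')\bigr).
\]
Inserting this bound and invoking \eqref{eq:pDapri} of Lemma~\ref{lem:Ppapri} yield an integrand bounded by $C|x-x'|^{2\alpha}(t+\delta-s)^{-\alpha-1/2}(|\log(t+\delta-s)|+1)$. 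For any $\alpha\in(0,1/2)$, the corresponding $s$-integral is finite uniformly in $\delta\in(0,1]$, giving \eqref{eq:KCsp}.

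For \eqref{eq:KCti}, fix $x\in[0,L]$, $t<t'\in[0,T]$, and write $\mgloc_{t'}(x)-\mgloc_t(x)=N^{(1)}+N^{(2)}$, where $N^{(1)}:=\mgloc_{t,t'}(x)$ is the stochastic integral on $[t,t']$ appearing in \eqref{eq:mgloc} and $N^{(2)}$ is the stochastic integral on $[0,t]$ with integrand $\Se_{1/8}(s)[\pN_{t'+\delta-s}(\Xe_i(s),x)-\pN_{t+\delta-s}(\Xe_i(s),x)]$. Both are terminal values of martingales. BDG applied to $N^{(1)}$ together with $\pN_r(y,x)^2\leq Cr^{-1/2}\pN_{r/2}(y,x)$ and \eqref{eq:pDapri} bounds its quadratic variation by $C\int_t^{t'}(t'+\delta-s)^{-1/2}(|\log(t'+\delta-s)|+1)\,ds=O\bigl(|t'-t|^{1/2}(|\log(t'-t)|+1)\bigr)$. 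For $N^{(2)}$, I would derive the time-interpolation estimate
\[
	\bigl(\pN_{r'}(y,x)-\pN_r(y,x)\bigr)^2\leq C|r'-r|^{2\beta}\,r^{-2\beta-1/2}\bigl(\pN_r(y,x)+\pN_{r'}(y,x)\bigr),\quad\beta\in[0,1/2],
\]
from $|\partial_r\pN_r(y,x)|\leq Cr^{-1}\pN_{2r}(y,x)$ combined with the sup bound on $\pN_r$, and again invoke \eqref{eq:pDapri} to bound the quadratic variation of $N^{(2)}$ by $O(|t'-t|^{2\beta})$ for any $\beta\in(0,1/4)$. Taking square roots and combining produces \eqref{eq:KCti}.

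The main obstacle I anticipate is the simultaneous handling of the heat kernel singularity at $r=0$ (contributing $r^{-\alpha-1/2}$ for space and $r^{-2\beta-1/2}$ for time) together with the logarithmic loss in Lemma~\ref{lem:Ppapri}. Restricting the interpolation exponents to $\alpha\in(0,1/2)$ and $\beta\in(0,1/4)$ makes both singularities integrable uniformly in $\bfe$ and yields a common Kolmogorov--Chentsov exponent strictly less than $1$. The stopped indicator $\Se_{1/8}(s)$ baked into \eqref{eq:mgloc} is precisely what enables the uniform application of \eqref{eq:pDapri}.
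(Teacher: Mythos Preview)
Your proposal is correct and follows essentially the same route as the paper: verify the Kolmogorov--Chentsov criterion via \ac{BDG}, reduce to quadratic variations expressed through $\anglebk{\Pe_s,\cdot}$, use heat-kernel interpolation estimates in $x$ and in $t$, and close with Lemma~\ref{lem:Ppapri}. The paper's decomposition into $N^{\bfe}_1,N^{\bfe}_2,N^{\bfe}_3$ matches your space increment and your $N^{(2)},N^{(1)}$, and its estimates \eqref{eq:pNx}--\eqref{eq:pNt} are the same interpolation bounds you state, merely parametrized by a single exponent $\alpha\in(0,1)$ rather than your pair $(\alpha,\beta)$; the only item you omit is the one-line remark that $\mgloc_\Cdot(\Cdot)$ is continuous as a uniform limit of the continuous martingales $\Me_{0,\tloc}(\psi,k)$.
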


\begin{proof}
The process $ (t,x)\mapsto \mgloc_{t}(x) $,
as the uniform limit (as $ k\to\infty $) of the continuous martingale
$ \Me_{0,\tloc}(\psi,k) $ for $ \psi_s(y) := \pNbfe_{t-s}(y,x) $,
is continuous.

Fixing $ T,L\in\bbR_+ $, $ q\in(1,\infty) $ and $ \alpha\in(0,1) $,
hereafter we let $ C=C(T,L,q,\alpha)<\infty $.
For this fixed $ \alpha $,
we next verify the conditions \eqref{eq:KCbd}--\eqref{eq:KCti} for $ K^{\bfe}_t(x) = \mgloc_{t}(x) $.
The first condition \eqref{eq:KCbd} follows trivially since $ \mgloc_0(0)=0 $.
As for \eqref{eq:KCsp}--\eqref{eq:KCti},
fixing $t< t'\in[0,T]$, $x< x'\in[0,L]$,
our goal is to bound the moments of
$ N^{\bfe}_1 := \mgloc_{t}(x') - \mgloc_{t}(x)$
and $ \mgloc_{t'}(x) - \mgloc_{t}(x) = N^{\bfe}_2 + N^{\bfe}_3 $,
where
$ 
	N^{\bfe}_2 := 
	\Me_{\tloc}(\pNbfe_{t'-\Cdot}(\Cdot,x)) 
	- \Me_{\tloc}(\pN_{t-\Cdot}(\Cdot,x)) 
$
and 
$ 
	N^{\bfe}_3 := 
	\Me_{\tloc,\tploc}(\pNbfe_{t'-\Cdot}(\Cdot,x)).
$
To this end, we control the quadratic variation
\begin{align}
	&
	\label{eq:Mtig:V1}	
	V^{\bfe}_1 :=
	\int_0^{t} 
	\Se_{1/8}(s) 
	\angleBK{
		\Pe_{s},
		(\pNbfe_{t-s}(\Cdot,x')-\pN_{t-s}(\Cdot,x))^2
		}
	ds,
\\
	&
	\label{eq:Mtig:V2}
	V^{\bfe}_2 :=
	\int_0^{t}
	\Se_{1/8}(s)
	\angleBK{
		\Pe_{s},
		(\pNbfe_{t'-s}(\Cdot,x)-\pN_{t-s}(\Cdot,x))^2
		}
	ds,
\\
	&
	\label{eq:Mtig:V3}
	V^{\bfe}_3 :=
	\int_{t}^{t'}
	\Se_{1/8}(s) 
	\angleBK{\Pe_{s}, (\pNbfe_{t'-s}(\Cdot,x'))^2} ds,
\end{align}
of the martingales $ N^{\bfe}_j $, $ j=1,2,3 $, respectively.
By using
\begin{align}
	&
	\label{eq:pNx}
	\absBK{ \pNbfe_{t}(y,x) - \pNbfe_{t}(y',x') }
	\leq 
	C t^{-(\alpha+1)/2} ( |x-x'|^\alpha + |y-y'|^\alpha ),
\\
	&
	\label{eq:pNt}
	\absBK{ \pNbfe_{t}(y,x) - \pNbfe_{t'}(y,x) }
	\leq 
	C t^{-(\alpha+1)/2} (t'-t)^{\alpha/2},
\\
	&
	\notag
	|\pNbfe_{s'}(y,x)| \leq C (s')^{-1/2}
\end{align}
(where \eqref{eq:pNx} and \eqref{eq:pNt} follow from the 
$ \alpha $-H\"{o}lder continuity of $ \exp(-z^2/2) $ and $ z\exp(z^2/2) $, respectively),
we obtain 
\begin{align*}
	&
	\BK{ \pNbfe_{t-s}(\Cdot,x')-\pNbfe_{t-s}(\Cdot,x) }^2
	\leq
	C |x-x'|^{\alpha} \BK{ \pNbfe_{t-s}(\Cdot,x) + \pNbfe_{t-s}(\Cdot,x') },
\\
	&
	\BK{ \pNbfe_{t'-s}(\Cdot,x)-\pNbfe_{t-s}(\Cdot,x) }^2
	\leq
	C|t-t'|^{\alpha/2} \BK{ \pNbfe_{t'-s}(\Cdot,x') + \pNbfe_{t-s}(\Cdot,x') },
\\
	&
	\BK{ \pNbfe_{t'-s}(\Cdot,x') }^2
	\leq 
	C\BK{ t'-s }^{-1/2} \pNbfe_{t'-s}(\Cdot,x').
\end{align*}
Plugging this in \eqref{eq:Mtig:V1}--\eqref{eq:Mtig:V3},
and using \eqref{eq:p0Dapri},
we further obtain
\begin{align}
	&
	\label{eq:Mtig:Vbd1}
	\VertBK{V^{\bfe}_1}_q
	\leq 
	C|x-x'|^{\alpha} 
	\int_0^{t} (t'-s)^{-(1+\alpha)/2} 
	\BK{ |\log(t-s)|+1 } ds,
\\
	&
	\VertBK{V^{\bfe}_2}_q 
	\leq
	C|t'-t|^{\alpha/2} 
	\int_0^{t} (t'-s)^{-(1+\alpha)/2} \BK{ |\log(t'-s)|+1 } ds,
\\
	&
	\label{eq:Mtig:Vbd3}
	\VertBK{V^{\bfe}_3}_q 
	\leq
	C \int_{t}^{t'} (t'-s)^{-1/2} (|\log(t'-s)|+1) ds,
\end{align}
respectively.
By the Burkholder--Davis--Gundy inequality, we have
$ \Vertbk{N^{\bfe}_j}_q \leq C ( \Vertbk{V^{\bfe}_j}_{q/2} )^{1/2} $.
Combining this with \eqref{eq:Mtig:Vbd1}--\eqref{eq:Mtig:Vbd3},
we thus conclude \eqref{eq:KCsp}--\eqref{eq:KCti} for $ K^{\bfe}_t(x) = \mgloc_{t}(x) $.
\end{proof}

\begin{lemma}\label{lem:mgcon}
As $ \bfe\to\bfz $, 
$\{\mgloc_{\Cdot}(\Cdot)\}_{\bfe}$
converges in finite dimensional distribution 
to a centered Gaussian process $ \mgg_\Cdot(\Cdot) $
with the covariance \eqref{eq:mgCov}.
\end{lemma}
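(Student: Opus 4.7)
The plan is to apply the Cram\'er--Wold device together with a martingale central limit theorem for continuous martingales. Fix arbitrary $l\in\N$, $(t_1,x_1),\ldots,(t_l,x_l)\in\bbR_+^2$ and $\bfv=(v_j)\in\bbR^l$, and let $T:=\max_j t_j$. Define the continuous martingale
\begin{align*}
	N^{\bfe}_u := \sum_{j=1}^l v_j \mgloc_{u\wedge t_j}(x_j),
	\qquad u\in[0,T],
\end{align*}
so that $N^{\bfe}_T=\sum_j v_j \mgloc_{t_j}(x_j)$. It suffices to show that $N^{\bfe}_T\Rightarrow \calN(0,\sigma_*^2)$, where $\sigma_*^2 := \sum_{j,j'=1}^l v_j v_{j'} \Gamma^{(m)}_{t_j,t_{j'}}(x_j,x_{j'})$ and $\Gamma^{(m)}_{t,t'}(x,x')$ denotes the r.h.s.\ of \eqref{eq:mgCov}. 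By the standard CLT for continuous martingales (e.g.\ Theorem~VIII.4.17 of Jacod--Shiryaev, or directly via Dambis--Dubins--Schwarz), this reduces to showing
\begin{align}\label{eq:mgcon:goal}
	\angleBK{N^{\bfe}}_T \xrightarrow[\text{P}]{} \sigma_*^2.
\end{align}

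From \eqref{eq:mgloc}, a direct computation of the quadratic variation yields
\begin{align*}
	\angleBK{N^{\bfe}}_T
	=
	\sum_{j,j'=1}^l v_j v_{j'} \int_0^{t_j\wedge t_{j'}} \Se_{1/8}(s)\,
	\angleBK{ \Pe_{s},\, \pNbfe_{t_j-s}(\Cdot,x_j)\, \pNbfe_{t_{j'}-s}(\Cdot,x_{j'}) } ds,
\end{align*}
where $\pNbfe$ is as in \eqref{eq:PsipNbfe}. By Proposition~\ref{prop:Xotig}, $\Se_{1/8}(T)\to_{\text{P}} 1$, so we may drop $\Se_{1/8}(s)$ up to negligible error. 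The remaining task is to show that for each fixed pair $(t,t',x,x')$,
\begin{align}\label{eq:mgcon:LLN}
	\int_0^{t\wedge t'} \angleBK{ \Pe_{s},\, \pNbfe_{t-s}(\Cdot,x)\, \pNbfe_{t'-s}(\Cdot,x') } ds
	\xrightarrow[\text{P}]{}
	2 \int_0^{t\wedge t'}\!\!\int_0^\infty \pN_{t-s}(y,x)\,\pN_{t'-s}(y,x') dy\, ds.
\end{align}
This is a law-of-large-numbers statement for the empirical measure $\Pe_s=\e^{1/2}\EM_s$. Since $\{Y_i(\Cdot)\}$ is stationary, $\EMz_s$ has law $\PPPp$, and $\EM_s$ differs from $\EMz_s$ only by the shift $\Xe_{(0)}(s)$, which is $O(\e^{1/8})$ on $\Se_{1/8}(s)=1$. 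Thus, writing $f_s(y):=\pNbfe_{t-s}(y,x)\,\pNbfe_{t'-s}(y,x')$ and computing the mean and variance of the Poisson integral yields
\begin{align*}
	\Ex \anglebk{\Pe_s, f_s} = 2\int_{\Xe_{(0)}(s)}^\infty f_s(y) dy,
	\qquad
	\mathrm{Var}\anglebk{\Pe_s,f_s} = 2\e^{1/2}\int_0^\infty f_s(y)^2 dy,
\end{align*}
and since $\|f_s\|_\infty \leq C((t-s)(t'-s))^{-1/2}$ and the $s$-integral of $(t-s)^{-1}$ is logarithmically divergent but the $s$-integral of $(t-s)^{-1/2}(t'-s)^{-1/2}$ is finite, a Fubini and dominated-convergence argument (using continuity of $y\mapsto\pN_{t-s}(y,x)$ and smallness of $\delta,\eta,\Xe_{(0)}(s)$) delivers \eqref{eq:mgcon:LLN} in $L^1$, hence in probability.

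The main obstacle is controlling the near-singular behavior of $\pN_{t-s}(y,x)$ as $s\uparrow t$, which would otherwise make the variance of $\anglebk{\Pe_s,f_s}$ blow up after integration in $s$. This is precisely why we inserted the parameter $\delta>0$ in $\pNbfe$ (keeping $t-s+\delta$ bounded away from $0$) and is the reason the limit $\bfe\to\bfz$ can be taken \emph{after} computing the covariance in \eqref{eq:mgCov}; the same parameter also allows us to absorb the boundary shift $\Xe_{(0)}(s)=O(\e^{1/8})$ by using the Lipschitz continuity of $\pN_{t-s+\delta}(\Cdot,x)$ uniformly in $s\in[0,t]$. Once \eqref{eq:mgcon:LLN} is established, bilinearity in $(j,j')$ and the definition of $\sigma_*^2$ give \eqref{eq:mgcon:goal}, and the martingale CLT concludes the proof.
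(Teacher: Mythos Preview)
Your overall strategy is correct and coincides with the paper's: both reduce the finite-dimensional convergence to showing that the quadratic (cross-)variations
\[
\calV^{\bfe}_{t,t'}(x,x')
= \int_0^{t\wedge t'} \Se_{1/8}(s)\,\big\langle \Pe_s,\;\pNbfe_{t-s}(\Cdot,x)\,\pNbfe_{t'-s}(\Cdot,x')\big\rangle\,ds
\]
converge in probability to $\calV_{t,t'}(x,x')$. The paper phrases this via characteristic functions of a continuous martingale, which is equivalent to your martingale CLT route.

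The gap is in your law-of-large-numbers step \eqref{eq:mgcon:LLN}. Your proposed variance bound $\mathrm{Var}\langle\Pe_s,f_s\rangle=2\e^{1/2}\int f_s^2\,dy$ does not integrate over $s\in[0,t\wedge t']$ uniformly in $\delta$: on the diagonal $t=t'$, $x=x'$ one has $\int_0^\infty (\pNbfe_{t-s}(y,x))^4\,dy\sim(t-s+\delta)^{-3/2}$, so after Cauchy--Schwarz in $s$ the variance of the $s$-integral is controlled only by $C\,\e^{1/2}\delta^{-1/2}$. Since the lemma concerns the \emph{joint} limit $\bfe=(\e,\delta,\eta)\to\bfz$ with no relation imposed between $\e$ and $\delta$, this does not vanish. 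Relatedly, your remark that ``the limit $\bfe\to\bfz$ can be taken after computing the covariance'' and that $\delta>0$ provides a cushion misreads the statement: you are not allowed to send $\e\to0$ first with $\delta$ fixed and then $\delta\to0$. Also, the sentence ``the $s$-integral of $(t-s)^{-1/2}(t'-s)^{-1/2}$ is finite'' is false precisely when $t=t'$, which is the case you need.

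The paper avoids this by a different mechanism: instead of a Poisson mean/variance computation, it compares $\Xe_{(i)}(s)$ to the deterministic lattice $\xe_i:=\tfrac12\e^{1/2}i$ and uses the control $\Se_{1/8}(s)\,|\Xe_{(i)}(s)-\xe_i|\le d^\e_i(s)$ with $\|d^\e_i(s)\|_n\le C\e^{1/8}(1+(\xe_i)^{1/2})$. Combining this with the $\tfrac12$-H\"older estimate $|\pNbfe_\sigma(y,x)-\pNbfe_\sigma(y',x)|\le C\sigma^{-3/4}|y-y'|^{1/2}$ produces an $s$-integrand bounded by $C\e^{1/16}(t-s)^{-\beta}$ for some $\beta<1$, which is both integrable in $s$ and independent of $\delta$. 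This is the step you are missing; once you replace your variance argument by this H\"older comparison, the rest of your outline goes through.
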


\begin{proof}
Fixing arbitrary $ t_1,\ldots,t_m, x_1,\ldots,x_m \in\bbR_+ $,
we let $ C=C(t_1,\ldots,t_m,x_1,\ldots,x_m)<\infty $,
$ \bfv := (v_1,\ldots,v_m)\in\bbR^m $ and $ \iota := \sqrt{-1} $.
Consider the characteristic functions
$
	\cha_{\bfe}(\bfv)
	:= \Ex[ \exp ( \iota \sum_{j=1}^m v_j \mgloc_{t_{j}}(x_j) ) ]
$
and
$
	\cha(\bfe)
	:= \Ex[ \exp ( \iota \sum_{j=1}^m v_j \mgg_{t_{j}}(x_j) ) ]
$
of $ (\mgloc_{t_j}(x_j))_{j=1}^m $
and $ (\mgg_{t_j}(x_j))_{j=1}^m $, respectively.
By L\'{e}vy's continuity theorem,
it suffices to show $ \cha_{\bfe} \to \cha $.
Letting
$
	\qN_{s,t,t'}(y,x,x')
	:=
	\pN_{t-s}(y,x) \pN_{t'-s}(y,x')
$
and
$
	\calV_{t,t'}(x,x')
	:=
	2 \int_0^{t\wedge t' } \int_0^\infty 
	\qN_{s,t,t'}(y,x,x') dy ds
$,
we recall that
\begin{align*}
	\cha(\bfv)
	= 
	\exp
	\Big(  -2^{-1} \sum_{j,j'=1}^m v_j v_{j'} \calV_{t_j,t_{j'}}(x_j,x_{j'}) \Big).
\end{align*}
As for $ \cha_{\bfe} $,
since $ \mgloc_t(x) $ is a continuous martingale of
quadratic variation
\begin{align}
	&
	\notag
	\calV^{\bfe}_{t,t'}(x,x')
	:= \int_0^{t\wedge t'} \Se_{1/8}(s) \angleBK{ \Pe_s, \qNbfe_{s,t,t'}(\Cdot,x,x') } ds
\\
	&
	\label{eq:mgcon:calV}
	\quad
	=
	\int_0^{t\wedge t'} \Se_{1/8}(s) 
	\BK{ \e^{1/2} \sum_{i=0}^\infty \qNbfe_{s,t,t'}(\Xe_{(i)}(s),x,x') } ds,
\end{align}
where $ \qNbfe_{t,t'}(y,x,x') := \pNbfe_t(y,x) \pNbfe_t(y,x') $,
we have
\begin{align*}
	\cha_{\bfe}(\bfv)
	=
	\Ex \Big[ \exp\Big( -2^{-1}\sum_{j,j'=1}^m v_{j}v_{j'} \calV^{\bfe}_{t_j,t_{j'}}(x_j,x_{j'}) \Big) \Big].
\end{align*}
Given these expressions of $ \cha $ and $ \cha_{\bfe} $, 
by the bounded convergence theorem, it suffices to show that
\begin{align}\label{eq:mgcon:red}
	\calV^{\bfe}_{t,t'}(x,x') \to_\text{P} \calV_{t,t'}(x,x'),
\end{align}
for all $ t,t',x,x'\geq 0$.
Similar to \eqref{eq:icCon:qv}--\eqref{eq:Gammabfe},
in \eqref{eq:mgcon:calV},
if we replace $ \Xe_{(i)}(s) $ with $ \xe_i = \Ex(\Xe_{(0)}(0)) $,
the resulting expression $ \calV^{\bfe,*}_{t,t'}(x,x') $
represents a Riemann sum approximation of $ \calV_{t,t'}(x,x') $.
The only difference here is the extra factor of $ \Se_{1/8}(s) $,
which satisfies $ \Pro(\Se_{1/8}(s) =1,\forall s\in[0,T])\to 1 $
(by Proposition~\ref{prop:Xotig}).
Hence, in the same way $ \Gamma^{\bfe,*}_{t,t'}(x,x')\to\Gamma_{t,t'}(x,x') $,
we have $ \calV^{\bfe,*}_{t,t'}(x,x') \to_{\text{P}} \calV_{t,t'}(x,x') $,
thereby reducing showing \eqref{eq:mgcon:red} to showing
\begin{align}\label{eq:mgcon:goal}
	\int_0^{t\wedge t'}
	\Ex\BK{ \Se_{1/8}(s) \rbfe(s) } ds \longrightarrow 0,
\end{align}
for
$
	\rbfe(s)
	:= 
	\e^{1/2} 
	\sum_{i=0}^\infty 
	| \qNbfe_{s,t,t'}(\Xe_{(i)}(s),x,x') - \qNbfe_{s,t,t'}(\xe_i,x,x') |.
$

We now prove \eqref{eq:mgcon:goal}
by using the continuity of $ y\mapsto\qNbfe_{s,t,t'}(y,x,x') $
and the control on $ \Se_{1/8}(s)| \Xe_{(i)}(s) - \xe_i | $,
similar to the proof of \eqref{eq:icCon:claim}.
Expressing $ \Xe_{(i)}(s) $ as $ \Xe_{(i)}(s) - \Xe_{(0)}(s) + \Xe_{(0)}(s) $,
with $ \calDe(j,j',t) $ defined as in \eqref{eq:calDX}, we have
\begin{align}\label{eq:mgcon:de}
	\Se_{1/8}(s) \absBK{ \Xe_{(i)}(s) - \xe_i }
	\leq
	2^{-1} \e^{1/2} \calDe(0,i,s) + \e^{1/8} := d^{\e}_i(s).
\end{align}
Since the gaps are at equilibrium, from \eqref{eq:calDY} we deduce
\begin{align}\label{eq:calDe:mom}
	\VertBK{\De(j,j',t)}_n \leq 
	C(n) |j-j'|^{1/2}.
\end{align}
Using this for $ (j,j')=(0,i) $, with $ i = 2\e^{-1/2}\xe_i $, we obtain
\begin{align}\label{eq:mgcon:dbd}
	\Vertbk{d^{\e}_i(s)}_n \leq C(n) \e^{1/8} \sqBK{ 1+(\xe_{i})^{1/2} },
	\quad \forall n\in \bbZ.
\end{align}
Next, by telescoping, we bound $ \rbfe(s) $ by $ \Fbfe_{1}(s) + \Fbfe_{2}(s) $, where
\begin{align}
	&
	\label{eq:mgcon:F1}
	\Fbfe_{1}(s) :=
	\e^{1/2} \sum_{i=0}^\infty
	\absBK{ 
		\pNbfe_{t-s}(\Xe_{(i)}(s),x) - \pNbfe_{t-s}(\xe_i,x) 
	}
	\pNbfe_{t'-s}(\Xe_{(i)}(s),x') ,
\\
	&
	\label{eq:mgcon:F2}
	\Fbfe_{2}(s) := 
	\e^{1/2} \sum_{i=0}^\infty
	\pNbfe_{t-s}(\xe_i,x) 
	\absBK{ \pNbfe_{t'-s}(\Xe_{(i)}(s),x') - \pN_{t'-s}(\xe_i,x') }.
\end{align}
In \eqref{eq:mgcon:F1}--\eqref{eq:mgcon:F2},
using \eqref{eq:pNx} for $ \alpha=1/2 $ and using \eqref{eq:mgcon:de},
we obtain
\begin{align}
	&
	\label{eq:mgcon:h1}
	\Se_{1/8}(s) \Fbfe_{1}(s) 
	\leq 
	C (t-s)^{-3/4} 
	\e^{1/2} \sum_{i=0}^\infty
	\BK{ d^{\e}_i(s) }^{1/2} \Se_{1/8}(s) \pNbfe_{t'-s}(\Xe_{(i)}(s),x'),
\\
	&
	\label{eq:mgcon:h2}
	\Se_{1/8}(s) \Fbfe_{2}(s) 
	\leq 
	C (t'-s)^{-3/4}
	\e^{1/2} \sum_{i=0}^\infty
	\BK{ d^{\e}_i(s) }^{1/2} \pNbfe_{t-s}(\xe_i,x).
\end{align}
Plugging \eqref{eq:mgcon:dbd} in \eqref{eq:mgcon:h2}, we obtain
\begin{align}\label{eq:mgCon:F2}
	\Ex\BK{ \Se_{1/8}(s) \Fbfe_{2}(s) }
	\leq 
	C (t'-s)^{-3/4} \e^{1/16}.
\end{align}
As for $ \Fbfe_{1}(s) $, fixing $ q\in(1,2) $,
in \eqref{eq:mgcon:h1},
for each $ i $, multiplying and dividing by the factor $ \exp(-\Xe_{(i)}(s)) $,
we apply H\"{o}lder's inequality
(with respect to $ \Ex[\sum_i(\Cdot)] $)
to obtain $ \Ex(\Fbfe_1(s)) \leq (t-s)^{-3/4} \fbfe_{11}(s) \fbfe_{12}(s) $,
where
\begin{align*}
	&
	\fbfe_{11}(s) :=
	\sqBK{ \Ex\BK{ \e^{1/2} \sum_{i=0}^\infty (d^{\e}_i(s))^{q'/2} e^{-q'\Xe_{(i)}(s)} } }^{1/q'},
\\
	&
	\fbfe_{12}(s) :=
	\sqBK{ \Ex \BK{
		\Se_{b}(s)
		\angleBK{ \Pe_s, \exp(q\Cdot) \BK{ \pNbfe_{t'-s}(\Cdot,x') }^q }
		} }^{1/q},
\end{align*}
and $ 1/q' + 1/q =1 $.
Combining \eqref{eq:mgcon:dbd} and \eqref{eq:Iebd} for $ f_i = (d^{\e}_i(s))^{q'} $,
we obtain $ \fbfe_{11}(s) \leq C\e^{1/16} $.
As for $ \fbfe_{12}(s) $,
with $ e^{qy} p_{\sigma}(y-z) = C(\sigma,z) p_{\sigma}(y-q\sigma-z) $,
we have 
\begin{align*}
	&
	e^{qy} \BK{ \pNbfe_{t'-s}(y,x') }^q
	=
	\sqBK{ \pNbfe_{t'-s}(y,x') }^{q-1} e^{qy} \pNbfe_{t'-s}(y,x')
\\
	&
	\quad
	\leq
	C(L,T,q) (t'-s)^{-(q-1)/2} \pNbfe_{t'-s}(y-q(t'+\delta-s),x').
\end{align*}
Using this and \eqref{eq:pDapri} for $ y'= q(t+\delta-s) $,
we obtain
$  \fbfe_{12} \leq C(t-s)^{-(q-1)/2q} (|\log(t'-s)|+1)^{1/q} $.
Consequently,
\begin{align}\label{eq:mgCon:F1}
	\Ex\BK{ \Fbfe_{1}(s) }
	\leq 
	C \e^{1/16} (t-s)^{-3/4-(q-1)/2q} \BK{|\log(t'-s)|+1}.
\end{align}
With $ (q-1)/2q <1/4 $, from \eqref{eq:mgCon:F2}--\eqref{eq:mgCon:F1}
we conclude \eqref{eq:mgcon:goal}.
\end{proof}

\section{Proof of Proposition~\ref{prop:fluxii}}
\label{sect:fluxi}

Recall $ \te_k := \e^{-1}k $.
We first establish the following estimates on the continuity in $ t $ 
of $ \flug_t(x) $, $ \tagpp_t(x) $ and $ \tagp_t(x) $.

\begin{lemma}
For any fixed $ T,L\in\bbR_+ $,
\begin{align}
	&
	\label{eq:locxi}
	F_{\tagp}(T,L) := 
	\sup\curBK{
	\absBK{ \tagp_t(x) - \tagp_{t_k}(x) }:
		k\leq T\e^{-1}, \ t\in[t_k,t_{k+1}], \ x\in[0,L]
		}
	\xrightarrow[\text{P}]{}
	0,
\\
	&
	\label{eq:loczeta}
	F_{\tagpp}(T,L) := 
	\sup\curBK{ 
		\absBK{ \tagpp_t(x) - \tagpp_{t_k}(x) }:
		k\leq T\e^{-1}, \ t\in[t_k,t_{k+1}], \ x\in[0,L]
		}
	\xrightarrow[\text{P}]{}
	0,
\\
	&
	\label{eq:locG}
	F_{\flug}(T,L) := 
	\sup\curBK{ 
		\absBK{ \flug_t(x) - \flug_{t_k}(x) }:
		k\leq T\e^{-1}, \ t\in[t_k,t_{k+1}], \ x\in[0,L]
		}
	\xrightarrow[\text{P}]{}
	0.
\end{align}
\end{lemma}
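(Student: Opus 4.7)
The strategy is to introduce two good events---one controlling individual particle displacements over the mesh, and one controlling the local density of particles---on whose intersection all three suprema are deterministically small. Fix $\beta \in (1/4, 1/2)$, say $\beta = 3/8$, let $\delta_1 := \e^\beta$, and choose $M_\e := \ceilbk{4L \e^{-1/2}}$ (recall $\gamma = 1$). A standard Poisson tail bound shows that with overwhelming probability, both $\ie(x) \leq M_\e$ (deterministically) and $\I_0(x) \leq M_\e$ for all $x \in [0, L]$. Define
\begin{equation*}
	E_1^\e := \Big\{ \max_{j \leq M_\e} \max_{k \leq T\e^{-1}} \sup_{s \in [t_k, t_{k+1}]} \absBK{ \Xe_{(j)}(s) - \Xe_{(j)}(t_k) } \leq \delta_1 \Big\}.
\end{equation*}
By Lemma~\ref{lem:locflu} with $\alpha = \delta_1$ and $t_2 - t_1 = \e$, each individual violating event has probability at most $C \exp(-\e^{-1/8} + 2)$, so a union bound over the $O(\e^{-3/2})$ pairs $(j, k)$ still yields $\Pro((E_1^\e)^c) \to 0$ via the stretched-exponential decay.

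On $E_1^\e$, the bounds \eqref{eq:locxi} and \eqref{eq:loczeta} follow immediately. Since $\ie(x)$ and $\I_0(x)$ are $t$-independent,
\begin{equation*}
	\tagp_t(x) - \tagp_{t_k}(x) = -2\e^{-1/4} \sqBK{ \Xe_{(\ie(x))}(t) - \Xe_{(\ie(x))}(t_k) },
\end{equation*}
and similarly for $\tagpp$ with $\I_0(x)$ in place of $\ie(x)$. On $E_1^\e$, both displacements are at most $\delta_1$, so $F_{\tagp}(T,L), F_{\tagpp}(T,L) \leq 2\e^{\beta - 1/4} = 2\e^{1/8}$, which tends to $0$.

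For \eqref{eq:locG}, write $\flug_t(x) - \flug_{t_k}(x) = \e^{1/4}(\I_t(x) - \I_{t_k}(x))$. On $E_1^\e$, any $i$ whose indicator $\ind_\curBK{\Xe_i(s) \leq x}$ differs between $s = t_k$ and $s = t$ must satisfy $\Xe_i(t_k) \in [x - \delta_1, x + \delta_1]$. Covering $[0, L]$ by a $\delta_1$-mesh and absorbing the continuous sup over $x$ at the cost of widening the window, it suffices to bound $N_k^\e(j) := |\{i : \Xe_i(t_k) \in [(j-1)\delta_1, (j+2)\delta_1]\}|$ for each mesh point $j$. By the equilibrium assumption and Proposition~\ref{prop:Xotig} (which controls the shift $\Xe_{(0)}(t_k)$), $N_k^\e(j)$ is stochastically dominated by a $\Pois(\lambda)$ variable with $\lambda = 6 \e^{-1/8}$. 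Pick $\eta \in (0, 1/8)$ and $m_\e := \ceilbk{\e^{\eta - 1/4}}$; the Chernoff bound $\Pro(\Pois(\lambda) \geq m_\e) \leq (e\lambda/m_\e)^{m_\e}$ is super-polynomially small, so a union bound over the $O(\e^{-11/8})$ pairs $(k, j)$ still gives $\Pro((E_2^\e)^c) \to 0$ for $E_2^\e := \{\max_{k, j} N_k^\e(j) \leq m_\e\}$. On $E_1^\e \cap E_2^\e$, $F_{\flug}(T, L) \leq \e^{1/4} m_\e \leq 2\e^\eta \to 0$.

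The main obstacle is the third step, which couples the Brownian-type displacement control of Lemma~\ref{lem:locflu} with a Poissonian density bound. Both ingredients individually give super-polynomial decay, but only by choosing $\delta_1$ in the overlap range $\e^{1/4} \ll \delta_1 \ll \e^{1/2}$ can one simultaneously make (i) the displacement $\delta_1$ small compared to the $\e^{1/4}$ rescaling factor so that $\tagp$ and $\tagpp$ oscillate by $o(1)$, and (ii) the expected count $\delta_1 \e^{-1/2}$ of particles capable of crossing a given level stay well below the threshold $\e^{-1/4}$ at which $F_{\flug}$ would blow up.
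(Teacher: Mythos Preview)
Your treatment of \eqref{eq:locxi} and \eqref{eq:loczeta} is essentially identical to the paper's: both apply Lemma~\ref{lem:locflu} with a union bound over the $O(\e^{-3/2})$ pairs $(j,k)$, using the stretched-exponential tail.

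For \eqref{eq:locG} you take a genuinely different route. The paper first uses stationarity of the gap process to reduce to the single window $[0,\e]$, then represents the net flux $\flug_t(x)-\flug_0(x)$ via the comparison processes $\Xl,\Xr$ and bounds the resulting $H^\e(j)$ through the exponential-moment estimates \eqref{eq:UrBd}--\eqref{eq:UlBd}. Your argument instead recycles the same event $E_1^\e$ used for \eqref{eq:locxi}--\eqref{eq:loczeta}, observing that any ranked position whose side of $x$ changes over $[t_k,t_{k+1}]$ must have started within $\delta_1$ of $x$, and then controls that count by a Poisson tail. This is more unified (one good event does all three parts) and avoids explicitly invoking $\Xl,\Xr$, at the cost of a union bound over all $k$ rather than a reduction to $k=0$.

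Two small points to tighten. First, $E_1^\e$ only controls ranked positions $j\le M_\e$; to conclude that no particle of rank $>M_\e$ contributes to $\I_t(x)-\I_{t_k}(x)$ you need the additional high-probability event $\{\Xe_{(M_\e)}(t_k)>L+\delta_1\ \text{for all }k\le T\e^{-1}\}$ (easy from the equilibrium density and Proposition~\ref{prop:Xotig}, since $M_\e\approx 4L\e^{-1/2}$ puts that particle near $2L$), so that on $E_1^\e$ one has $\Xe_{(j)}(t)\ge \Xe_{(M_\e)}(t)>L\ge x$ for $j\ge M_\e$. Second, in your closing paragraph the range should read $\e^{1/2}\ll\delta_1\ll\e^{1/4}$, not the reverse; the lower constraint is what makes Lemma~\ref{lem:locflu} effective, and the upper one makes both $\e^{-1/4}\delta_1$ and the Poisson mean $\delta_1\e^{-1/2}$ small relative to the relevant thresholds.
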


\begin{proof}
We say that events $ \{\calA^\e\} $ happen at \ac{SPR} if, 
for each $ q\geq 1 $, $ \Pro((\calA^\e)^c) \e^{-q} $ is uniformly bounded.
By \eqref{eq:xi}, 
$
	\tagp_t(x) - \tagp_{t_k}(x)
	= 2 \e^{1/4} ( X_{(\ie(x))}(\e^{-1}t) - X_{(\ie(x))}(\e^{-1}t_k) ).
$
Fixing arbitrary $ a>0 $,
from \eqref{eq:locflu} we deduce that
\begin{align*}
	\sup_{t\in[t_k,t_{k+1}]} 
	\curBK{ \e^{1/4} |X_{(\ie(x))}(\e^{-1}t) - X_{(\ie(x))}(\e^{-1}t_k)| } 
	\leq a,
	\quad
	\text{ at \ac{SPR}}. 
\end{align*}
By taking the union bound over $ k \leq T\e^{-1} $
and over $ \ie(x) \in \bbZ\cap[0,L\e^{-1/2}+1] $,
which is a union size $ C\e^{-3/2} $,
we conclude that $ F_{\tagp}(T,L) \leq a $ at \ac{SPR}.
As $ a>0 $ is arbitrary, we obtain \eqref{eq:locxi}.

As for \eqref{eq:locxi},
by \eqref{eq:zetae} we have 
$
	\tagpp_t(x) - \tagpp_{t_k}(x)
	= 2 \e^{1/4} ( X_{(\I_0(x))}(\e^{-1}t) - X_{(\I_0(x))}(\e^{-1}t_k) )
$.
Further,
with $ \{\Xe_{(i)}(0)\}\sim\PPPp $, we have
\begin{align}\label{eq:I0bd}
	\I_0(x) \leq (4L+1)\e^{-1/2} \quad \text{ as \ac{SPR},}	
\end{align}
so \eqref{eq:loczeta} follows by the same argument for \eqref{eq:locxi}.

Letting
$
	F_{\flug}(k,L) := 
	\sup\{ 
		| \flug_{t}(x) - \flug_{t_k}(x)| :
		 t\in[t_k,t_{k+1}], \ x\in[0,L]
		\},
$
we next show \eqref{eq:locG} by showing, for each $ k $ and fixed $ a>0 $, 
$ F_{\flug}(k,L)\leq a $ at \ac{SPR}.
By stationarity,
$ 
	|\flug_t(x)-\flug_{0}(x)| \stackrel{\text{distr.}}{=} 
	|\flug_{t+t_k}(x+\Xe_{(0)}(t_k))-\flug_{t_k}(x+\Xe_{(0)}(t_k))|
$
and by Proposition~\ref{prop:Xotig} $ |\Xe_{(0)}(t_k)|\leq 1 $ at \ac{SPR}.
Hence it suffices to show 
$
	F^{*}_{\flug}(L)
	\to_\text{P} 0,
$
where
$
	F^{*}_{\flug}(L)
	:=
	\sup\{ 
		| \flug_{t}(x) - \flug_{0}(x)| : t\in[0,\e], \ x\in[-1,L+1]
		\}	
	\to_\text{P} 0 
$.
With $ \flug_t(x) $ defined as in \eqref{eq:flug}, 
we have that
$ 	
	\flug_t(x) - \flug_{0}(x)
	= \e^{1/4} \Ge(t,x),
$
where
\begin{align}
	&
	\notag
	\Ge(t,x) = 
	\anglebk{ \EM_{t}, \ind_{ (-\infty,x]} } -
	\anglebk{ \EM_{0}, \ind_{ (-\infty,x]} }
\\
	&
	\quad
	\label{eq:locGflux}
	=
	\sum_{i\geq\I_{0}(x)} \ind_\curBK{\Xe_{i}(t) \leq x} - \sum_{i<\I_{0}(x)} \ind_\curBK{\Xe_{i}(t) >x}
\end{align}
is the net flux of particles across $ x $ within $ [0,t] $.
Let
\begin{align}
	&
	\label{eq:locG:H}
	\He(j) := 
	\sum_{i\geq j }  \Hl(i,j)
	+ \sum_{i<j} \Hr(i,j),
\\
	&
	\notag
	\Hl(i,j) := \ind_\curBK{ \inf_{t\in[0,\e]}\limits \Xl_{i}(t) \leq \Xl_{(j)}(0) },
	\quad
	\Hr(i,j) := \ind_\curBK{ \sup_{t\in[0,\e]}\limits \Xr_{i}(t) >\Xr_{(j-1)}(0) }.
\end{align}
In \eqref{eq:locGflux}, using \eqref{eq:rank},
$ \Xl_{(\I_{0}(x))}(0) >x $ and $ \Xe_{(\I_{0}(x)-1)}(0) \leq x $,
we then obtain that\\
$ 
	\sup_{t\in[0,\e]} 
	\{ |\Ge(t,x)| \}
	\leq 
	\e^{1/4}\He(\I_{0}(L)) 
$.
Combining this with \eqref{eq:I0bd}, we now arrive at
\begin{align}\label{eq:locG:k0}
	F^{*}_{\flug}(L)
	\leq 
	\sup_{ j\in\scrK(L) } \curBK{ \e^{1/4} \He(j) }
	\quad
	\text{ at \ac{SPR},}
\end{align}
where
$
	\scrK(L) := 
	\{ 
		j:
		|j| \leq 4(L+1)\e^{-1/2}
		\}.
$
Recall $ \Ur(t,i,j) $ and $ \Ul(t,i,j) $ are defined as in \eqref{eq:Ur}--\eqref{eq:Ul}.
Fixing any $ q\geq 1 $, 
in \eqref{eq:locG:H} taking the $ q $-th norm of both sides, we obtain
\begin{align*}
	& 
	\VertBK{ \He(j) }_q 
	\leq 
	\sum_{i\geq j }  \VertBK{ \Hl(i,j) }_q
	+ \sum_{i<j}  \VertBK{ \Hr(i,j) }_q
\\
	&
	\quad
	\leq
	\sum_{i\geq j }  \BK{ \Ex{ \Ul(\e,i,j) } }^{1/q}
	+ \sum_{i<j}  \BK{ \Ex{ \Ur(\e,i,j) } }^{1/q}.	
\end{align*}
Further using \eqref{eq:UrBd}--\eqref{eq:UlBd} in the last expression,
we conclude $ \VertBK{ \He(j) }_q \leq C(q) $.
With $ q\geq 1 $ being arbitrary and $ |\scrK(L)| \leq C\e^{-1/2} $, 
we have $ \sup_{ j\in\scrK(L) } \{\e^{1/4}\He(j)\} \leq a $ at \ac{SPR},
for arbitrary fixed $ a>0 $.
Combining this with \eqref{eq:locG:k0}, we thus complete the proof.
\end{proof}

Recall $ \calDe(j,j',t) $ is defined as in \eqref{eq:calDX}.
Let
\begin{align}\label{eq:muset}
	\scrI_{\mu'}(T,L) := 
	\curBK{
		(j,j',k)\in \N^3
		: j,j' \leq 4(L+1)\e^{-1/2}, |j-j'|\leq \e^{-\mu'},
		k \leq T\e^{-1}
	}.
\end{align}

\begin{lemma}\label{lem:calD}
For each fixed $ T,L\in\bbR_+ $ and $ \mu'\in(0,1/2) $,
\begin{align}\label{eq:calD}
	\sup_{(j,j',k)\in\scrI_{\mu'}(T,L)}
	\e^{1/4} \absBK{ \calDe(j,j',t_k) } 
	\xrightarrow[\text{P}]{}
	0.
\end{align}
\end{lemma}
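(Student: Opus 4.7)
My plan for Lemma~\ref{lem:calD} is to combine a union bound over the discrete index set $\scrI_{\mu'}(T,L)$ with the high-moment estimate \eqref{eq:calDe:mom} for $\calDe$. A key observation that simplifies the argument is that the supremum in \eqref{eq:calD} is taken only over the discrete time points $\te_k=\e k$, so no modulus-of-continuity argument in $ t $ needs to be inserted here---the continuity estimates \eqref{eq:locxi}--\eqref{eq:locG} will handle the time variable separately when $\calDe$ is invoked in the proof of Proposition~\ref{prop:fluxii}.

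First I would estimate the cardinality of $\scrI_{\mu'}(T,L)$. From the definition \eqref{eq:muset}, for each of the $O(\e^{-1})$ values of $k$ there are at most $O(\e^{-1/2})$ admissible values of $j$, and then at most $O(\e^{-\mu'})$ values of $j'$ with $|j-j'|\leq\e^{-\mu'}$. Hence
\begin{align*}
	|\scrI_{\mu'}(T,L)| \leq C(T,L)\, \e^{-3/2-\mu'}.
\end{align*}

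Next, for each fixed $(j,j',k)\in\scrI_{\mu'}(T,L)$, Chebyshev's inequality at the $q$-th moment together with \eqref{eq:calDe:mom} yields, for any $a>0$ and $q\in[1,\infty)$,
\begin{align*}
	\Pro\BK{\e^{1/4}|\calDe(j,j',\te_k)|>a}
	\leq a^{-q}\e^{q/4}\,C(q)^{q}|j-j'|^{q/2}
	\leq a^{-q}C(q)^{q}\,\e^{q(1/4-\mu'/2)}.
\end{align*}
A union bound combined with the cardinality estimate then gives
\begin{align*}
	\Pro\BK{\sup_{(j,j',k)\in\scrI_{\mu'}(T,L)} \e^{1/4}|\calDe(j,j',\te_k)|>a}
	\leq C(T,L,q)\,a^{-q}\,\e^{q(1/4-\mu'/2)-3/2-\mu'}.
\end{align*}

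The hypothesis $\mu'<1/2$ ensures $1/4-\mu'/2>0$, so by choosing $q$ large enough---any $q>(6+4\mu')/(1-2\mu')$ will do---the exponent of $\e$ on the right is strictly positive, hence the probability tends to zero as $\e\to 0$. Since $a>0$ is arbitrary, \eqref{eq:calD} follows. I do not anticipate any serious obstacle: the only structural input is the stationarity-based moment bound \eqref{eq:calDe:mom}, which identifies $\calDe(j,j',\te_k)$ as the centered sum of $|j-j'|$ i.i.d.\ $(1-2\Exp(2))$ variables (by \eqref{eq:calDY}), and the restriction $|j-j'|\leq\e^{-\mu'}$ with $\mu'<1/2$ is precisely what is needed so that the typical scale $\e^{1/4}|j-j'|^{1/2}\leq\e^{1/4-\mu'/2}$ is $o(1)$ and so that high-order moments overcome the polynomial cardinality of the index set.
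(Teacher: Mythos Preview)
Your proof is correct and follows essentially the same route as the paper: apply Chebyshev with the moment bound \eqref{eq:calDe:mom}, then a union bound over $\scrI_{\mu'}(T,L)$, and choose the moment order large enough so that the polynomial cardinality of the index set is beaten. Your cardinality estimate $|\scrI_{\mu'}(T,L)|\leq C\e^{-3/2-\mu'}$ is in fact a bit sharper than the paper's cruder bound $|\scrI_{\mu'}(T,L)|\leq C\e^{-2}$, but the structure of the argument is identical.
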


\begin{proof}
Fix such $ L,T,\mu' $ and any $ a>0 $.
By \eqref{eq:calDe:mom} we have
$
	\Pro
	\BK{ \e^{1/4}| \calDe(j,j',t_k) | \geq  a } 
	\leq C(n,a) (|j-j'| \e^{1/2} )^n.
$
Form this, 
fixing $ n > 2/(1/2-\mu') $,
using the union bound, we obtain
\begin{align*}
	\Pro
	\BK{ 
		\bigcup_{(j,j',k)\in\scrI_{\mu'}(T,L)}
		\curBK{  \e^{1/4}\absBK{ \calDe(j,j',t_k) } \geq a } 
	} 
	\leq 
	C(n,T,L,a) \e^{n(1/2-\mu')} \absBK{ \scrI_{\mu'}(T,L) }.	
\end{align*}
With $ | \scrI_{\mu'}(T,L) | \leq C(T,L)\e^{-2} $ and $ n(1/2-\mu')-2 >0 $,
the r.h.s.\ tends to zero as $ \e\to 0 $.
Since $ a>0 $ is arbitrary, we conclude \eqref{eq:calD}.
\end{proof}

Hereafter we say events $ \{\calA^\e\}_\e $ occur with
with an \ac{OP} if $ \Pro(\calA^\e) \to 1 $ as $ \e\to 0 $.

\begin{proof}[Proof of Proposition~\ref{prop:fluxii}\ref{enu:fluflug}]

Fixing any $ L, T \in\bbR_+ $, $ a\in(1/2,\infty) $ and $ b\in(0,1/4) $,
our goal is to show
\begin{align}\label{eq:fluxi1}
	\sup_{x\in[\e^{b},L]} \sup_{t\in[0,T]} 
	\absBK{ \flux_t(x) - \flug_t(x) }
	\xrightarrow[\text{P}]{} 0.
\end{align}
Let 
\begin{align}
	&
	\notag
	\fe(y,x) := \Psi_{\e^a}(y,x)-\ind_{(-\infty,x]}(y)
\\
	&
	\label{eq:fluxi1:fe}
	\quad
	= 1- \Phi_{\e^a}(y+x) + \ind_{(x,\infty)}(y) - \Phi_{\e^a}(y-x).
\end{align}
Recall from \eqref{eq:flu} and \eqref{eq:flug} that
$ 
	\flux_t(x) - \flug_t(x) 
	= \anglebk{ \Qe_t, \fe(\Cdot,x) } = \Fe_1(t,x) + \fe_2(x),
$
where
\begin{align}
	&
	\label{eq:fluxi1:f1}
	\Fe_1(t,x) := 
	\e^{1/4}\angleBK{ \EM_t, \fe(\Cdot,x) },
\\
	&
	\label{eq:fluxi1:f2}
	\fe_2(x) := 
	-2 \e^{-1/4} \int_{0}^\infty \fe(y,x) dy.
\end{align}
From \eqref{eq:fluxi1:fe} we deduce that, for $ (x,y)\in\bbR_+^2 $,
\begin{align}\label{eq:fluxi1:}
	|\fe(y,x)| 
	\leq 
	\Phi_{\e^{a}}(-y-x) + \Phi_{\e^{a}}(-|y-x|).
\end{align} 
In particular,
$ \int_0^\infty |\fe(y,x)| dy \leq C \int_0^\infty \Phi_{\e^{-a}}(z) dz = C \e^{a/2} $.
Using this in \eqref{eq:fluxi1:f2},
with $ a>1/2 $, we conclude that $ \sup_{x\geq 0} \{|\fe_2(x)|\} \to 0 $.
It then suffices to show 
\begin{align}\label{eq:flux1:F1}
	\sup \curBK{ |\Fe_1(t,x)|: t\in[0,T],x\in[\e^{b},L] }
	\xrightarrow[\text{P}]{}
	0.
\end{align}

We now show \eqref{eq:flux1:F1}
by using \eqref{eq:fluxi1:} and $ \EMz_t\sim\PPPp $.
Recall on $ (y,x)\in\bbR_+^2 $ we have
$ \Phi_{\e^{a}}(-y-x) \leq C \exp[ -2(x+y)/\e^{a/2} ] $
and
$ \Phi_{\e^{a}}(-|y-x|) \leq C \exp[-2|y-x|/\e^{a/2} ] $.
Combining this with \eqref{eq:fluxi1:},
we obtain
\begin{align*}
	|\fe(y,x)| \ind_\curBK{x\geq \e^{b}}
	\leq 
	C e^{ -2\e^{b-a/2} -2y\e^{-a/2} } + C e^{-2|y-x|\e^{-a/2} }.
\end{align*}
This decays fast expect when $ |y-x| $ is small,
so fixing $ a'\in(1/2,a) $, we further deduce
\begin{align*}
	|\fe(y,x)| \ind_\curBK{x\geq \e^{b}}
	\leq 
	 C \BK{ e^{ -2\e^{b-a/2} } e^{-y} + e^{-\e^{(a'-a)/2}} e^{-(y-x)} } 
	 +
	 C \ind_\curBK{ |y-x|<\e^{a'/2} }.
\end{align*}
Plugging this in \eqref{eq:fluxi1:f1},
we arrive at $ \Fe_1(t,x) \leq  C\Fe_{11}(t,x) + C\Fe_{12}(t,x) $,
where
\begin{align*}
	&
	\Fe_{11}(t)
	:=
	\e^{1/4} \BK{ e^{-\e^{b-a/2}} + e^{-\e^{(a'-a)/2} } }
	\anglebk{ \EM_t, \exp\BK{-\Cdot} },
\\
	&
	\Fe_{12}(t,x) 
	:= \e^{1/4}\anglebk{ \EM_t, \ind_{ (x-\e^{a'/2},x+\e^{a'/2}] }(\Cdot) }.
\end{align*}
For $ \Fe_{11}(t) $, with $ a>a'\vee (2b) $, applying \eqref{eq:Leb:ranked}  
we conclude that $ \sup_{t\in[0,T]} \{\Fe_{11}(t)\} \to 0$ in $ L^1 $ and hence in probability.
Turning to bounding $ \Fe_{12}(t,x) $,
we let 
$ 	
	N(t,x) := 
	\e^{1/4} \anglebk{ \EMz_{t}, \ind_{ (x-\e^{a'/2},x+\e^{a'/2}] } }
	\sim
	\Pois(4\e^{(a'-1)/2}).
$
Since  $ \EM_t $ and $ \EMz_t $ differ only by the shift of $ \Xe_{(0)}(t) $,
which by Proposition~\ref{prop:Xotig} is at most $ 1 $ with an \ac{OP},
we have
\begin{align*}
	\sup_{x\in[0,L]} \Fe_{12}(t,x) \leq \sup_{ x\in[-1,L+1] } \curBK{ \e^{1/4} N(t,x) }
	\leq 
	2 \sup_{ |i| \leq (L+2)\e^{-a'/2} } \curBK{ \e^{1/4} N(t,2i\e^{a'/2}) },
\end{align*}
for all $ t\in[0,T] $ with an \ac{OP}.
Further, with $ a'>1/2 $,
from the large deviations bound of $ \Pois(4\e^{(a'-1)/2}) $,
we deduce that
\begin{align*}
	\sup_{k\leq T\e^{-1}} \sup_{ |i| \leq (L+2)\e^{-a'/2} } \curBK{ \e^{1/4} N(t_k,2i\e^{a'/2}) }
	\xrightarrow[\text{P}]{}
	0,
\end{align*}
thereby concluding 
\begin{align}\label{eq:flug:pre}
	\sup_{k\leq T\e^{-1}} \sup_{x\in[0,L]} \{\Fe_{12}(t_k,x) \} \to_\text{P} 0.
\end{align}
Now, since (by \eqref{eq:flug}) 
$ \Fe_{12}(t,x) = \flug_t(x+\e^{a'/2}) - \flug_t(x-\e^{a'/2}) -2 \e^{a'/2-1/4} $,
combining \eqref{eq:flug:pre} and \eqref{eq:locG}, 
we conclude $ \sup_{t\in[0,T]} \sup_{x\in[0,L]} \{\Fe_{12}(t,x) \} \to_\text{P} 0 $.
\end{proof}
\begin{proof}[Proof of Proposition~\ref{prop:fluxii}\ref{enu:flugtagpp}]
Fixing $ L, T \geq 0 $ and $ b \in(0,1/4) $,
by \eqref{eq:loczeta}--\eqref{eq:locG} and  \eqref{eq:flugtagp}, it suffices to show
\begin{align*}
	\sup_{k \leq T\e^{-1}} \sup_{x\in[\e^{b},L]} 
	\absBK{ \e^{1/4}\calDe(\I_{t_k}(x),\I_0(x), t_k) - 2\e^{1/4} \rho^\e_{t_k}(x) }
	\xrightarrow[\text{P}]{} 0,
\end{align*}
as $ \e\to 0 $.
Letting
\begin{align*}
	&
	\Ge_1 :=	
	\sup_{k\leq T\e^{-1}} \sup_{x\in[\e^b,L]} \curBK{ \e^{1/4}\rho^\e_{t_k}(x) },
\quad	
	\Ge_2 := 
	\sup_{k\leq T\e^{-1}} \sup_{x\in[\e^b,L]}
	\curBK{	
		\e^{1/4}\absBK{ \calDe(\I_{t_k}(x),\I_0(x), t_k) }
		},
\end{align*}
we next show
\begin{enumerate*}[label=\itshape\roman*\upshape)]
\item $ \Ge_1 \to_\text{P} 0 $;
and \item $ \Ge_2 \to_\text{P} 0 $.
\end{enumerate*}

(\textit{i}) 
From \eqref{eq:I} and \eqref{eq:EMz}, we have 
$ 
	\I_{t_k}(x) = \anglebk{ \EM_{t_k}, \ind_{(-\infty,x]} } 
	= \anglebk{ \EMz_{t_k}, \ind_{(-\infty,x-\Xe_{(0)}(t_k)]} }.
$
Further,
by Proposition~\ref{prop:Xotig}, 
with an \ac{OP} we have $ \sup_{t\in[0,T]}\{|\Xe_{(0)}(t)|\}\leq 1 $,
so, with an \ac{OP}, 
$ \I_{t_k}(L) \leq \anglebk{ \EMz_{t_k}, \ind_{(-\infty,L+1]} } $ for all $ k\leq T\e^{-1} $.
Using this and the large deviations bound of $ \anglebk{ \EMz_{t_k}, \ind_{(-\infty,L+1]} } \sim \Pois(2\e^{-1/2}(L+1)) $,
we then conclude that
\begin{align}\label{eq:Ibd}
	\curBK{ \I_{\te_k}(L) \leq 4(L+1)\e^{-1/2}, \ \forall k\leq T\e^{-1} }
	\text{ holds with an \ac{OP}.}
\end{align}
Next, By Proposition~\ref{prop:Xotig},
with an \ac{OP}, for all $ x\in[\e^{b},L] $
and $ t\in[0,T] $,
we have $ \Xe_{(0)}(t) \leq \e^{b} \leq x $.
Consequently, by \eqref{eq:grd},  
$ |\rho^\e_t(x)| \leq Y_{\I_t(x)}(\e^{-1}t) $, with an \ac{OP}.
Combining this with \eqref{eq:Ibd},
we then conclude that, with an \ac{OP},
$
	\Ge_1 
	\leq 
	\sup_{k\leq T\e^{-1}} \sup_{|j|\leq 4(L+1)\e^{-1/2}} 
	\{ \e^{1/4} Y_{i}(t_k) \},
$
which clearly converges to zero in probability.

(\textit{ii})
By Proposition~\ref{prop:fluConv} and Proposition~\ref{prop:fluxii}\ref{enu:fluflug},
the process
$
	(t,x) \mapsto (\flug_t(x)-\flug_0(x)) \ind_{[\e^b,\infty)}(x)
$
converges weakly.
The latter, by \eqref{eq:flug}--\eqref{eq:I},
is equal to $ \e^{1/4}(\I_t(x)-\I_0(x)) \ind_{[\e^b,\infty)}(x) $.
From this, we conclude that, for any $ \mu'>1/4 $,
\begin{align}\label{eq:flux2:J}
	\lim_{a\to\infty} \lim_{\e\to 0}
	\Pro 
	\BK{ 
		\sup_{t\in[0,T]} \sup_{x\in[\e^{b},L]}  
		\absBK{\I_t(x)-\I_0(x)} \leq \e^{-\mu'}
	}
	=1. 
\end{align}
Fix arbitrary $ \mu'\in(1/4,1/2) $.
With $ \scrI_{\mu'}(T,L) $ defined as in \eqref{eq:muset},
combining \eqref{eq:Ibd} and \eqref{eq:flux2:J},
we arrive at
\begin{align*}
	\lim_{\e\to 0} 
	\Pro \BK{ \Ge_2 \leq \sup_{(j,j',k)\in\scrI_{\mu'}(T,L)} \curBK{ \e^{1/4}\absBK{ \calDe(j,j',\te_k) } }  } 
	= 1.
\end{align*}
From this and Lemma~\ref{lem:calD}, we conclude $ \Ge_2 \to_\text{P} 0 $.
\end{proof}

\begin{proof}[Proof of Proposition~\ref{prop:fluxii}\ref{enu:tagpptagp}]

Fixing $ L, T \geq 0 $ and $ b \in(0,1/4) $,
by \eqref{eq:locxi}--\eqref{eq:loczeta} and \eqref{eq:tagpptagp}, 
it suffices to show
\begin{align}\label{eq:flux3:red}
	\sup_{k \leq T\e^{-1}} \sup_{x\in[0,L]} 
	\absBK{ \e^{1/4} \calDe(\I_0(x+\e^{b}),\ie(x), t_k) }
	\xrightarrow[\text{P}]{} 0,
\end{align}
as $ \e\to 0 $.
As shown in the proof of Proposition~\ref{prop:fluxii}\ref{enu:flugtagpp},
this amounts to showing,
for some $ \mu'\in(0,1/2) $,
$
	\lim_{\e\to 0}
	\Pro \BK{ \absBK{\I_0(x+\e^b)-\ie(x)} \leq \e^{-\mu'}, \forall x\in[0,L] }
	 = 1.
$
By Markov's inequality, with $ b<1/4 $, this in turn follows from
\begin{align}\label{eq:flux3:claim:}
	\Ex\BK{ \sup_{x\in[0,L]} \e^{1/2-b}\absBK{\I_0(x+\e^b)-\ie(x)} }^2
	\leq
	C.
\end{align}
With $ \I_0(x')=\anglebk{ \EM_0, \ind_{[0,x]} } $ and $ \ie(x) := \floorbk{2\e^{-1}x} $,
we have
$ \e^{1/2-b}(\I_0(x+\e^b)-\ie(x)) = \e^{1/2-b} m^\e(x+\e^b) - 2 + \e^{1/2-b} \re $,
for some $ |\re| \leq 1 $ and for $ m^\e(x'):= \anglebk{ \EM_0, \ind_{[0,x']} } -2\e^{-1/2}x' $.
The process $ m^{\e}(\Cdot) $ is a martingale since $ \EM_0 \sim \PPPp $.
With $ b\in(0,1/4) $,
applying Doob's $ L^2 $ maximal inequality to $ m^\e(\Cdot) $,
we obtain $ \Ex(\sup_{x\in[0,L+1]} \{\e^{1/4}m^\e(x)\})^2 \leq C $,
thereby concluding \eqref{eq:flux3:claim:}.
\end{proof}

\bibliographystyle{abbrv}
\bibliography{DT-altas}

\end{document}